\documentclass[reqno]{amsart}

\usepackage[sorting=none, sorting=nyt, maxbibnames=99, backend=biber]{biblatex}

\renewbibmacro{in:}{}
\bibliography{references.bib}
\usepackage{amssymb}
\usepackage{calrsfs}
\usepackage{graphics,graphicx,mathrsfs}
\usepackage{enumerate}
\usepackage{enumitem}
\usepackage{textcomp}
\usepackage{bbold}
\usepackage{url}
\usepackage{xcolor}
\definecolor{vio}{rgb}{0.54, 0.17, 0.89}
\usepackage[ruled, lined, linesnumbered, longend]{algorithm2e}
\usepackage{hyperref}
\usepackage{titlesec}
\usepackage{todonotes}

\newtheorem{theorem}{Theorem}[section]
\newtheorem{lemma}[theorem]{Lemma}

\newtheorem{proposition}[theorem]{Proposition}
\newtheorem{conjecture}[theorem]{Conjecture}

\numberwithin{equation}{section}

\theoremstyle{remark}
\newtheorem*{remark}{Remark}

\theoremstyle{definition}
\newtheorem{definition}{Definition}
\newtheorem{hypothesis}[theorem]{Hypothesis}

\titleformat{\section}
  {\normalfont\large\bfseries\centering}{\thesection}{1em}{}

\titleformat{\subsection}
  {\normalfont\bfseries}{\thesubsection}{1em}{}  

\titleformat{\subsubsection}
  {\normalfont\itshape}{\thesubsubsection}{1em}{}

\DeclareMathOperator{\Z}{\mathbb{Z}}

\DeclareMathOperator{\R}{\mathbb{R}}
\DeclareMathOperator{\N}{\mathbb{N}}

\def\cA{{\mathcal A}}
\def\cB{{\mathcal B}}

\def\cR{{\mathcal R}}
\def\cS{{\mathcal S}}

\def\fB{{\mathfrak B}}
\def\fm{{\mathfrak m}}
\def\fM{{\mathfrak M}}
\def\fS{{\mathfrak S}}

\def\E{{\mathbb E}}

\newcommand{\trev}{\text{rev}}

\def\reals{\hbox{\rm I\kern-.18em R}}
\def\complexes{\hbox{\rm C\kern-.43em
\vrule depth 0ex height 1.4ex width .05em\kern.41em}}
\def\field{\hbox{\rm I\kern-.18em F}} 

\let\svthefootnote\thefootnote
\newcommand\freefootnote[1]{%
  \let\thefootnote\relax%
  \footnotetext{#1}%
  \let\thefootnote\svthefootnote%
}

\newenvironment{section*}[2][A]{
  \section*{#2}
  \renewcommand\thesection{#1}
  \setcounter{theorem}{0}}{}

\newcommand{\rev}[1]{\overleftarrow{#1}}

\allowdisplaybreaks

\begin{document}

\title[The reverse Goldbach problem]{The reverse Goldbach problem and a refined Zsiflaw--Legeis theorem}

\author{Michael Harm}
\author{Daniel R. Johnston}
\thanks{The first author was supported by a University International Postgraduate Award (UIPA, RSRE7061, RSRE7063), a University of New South Wales School of Mathematics scholarship (RSRT6014) and an Australian Research Council Grant (ARC, RSGT002)}
\thanks{The second author was supported by Australian Research Council DP240100186.}
\address{School of Mathematics and Statistics, UNSW Sydney, Australia}
\email{m.harm@unsw.edu.au}
\address{School of Science, UNSW Canberra, Australia}
\email{daniel.johnston@unsw.edu.au}
\date\today

\begin{abstract}
    We prove new results on the additive theory of reversed primes $\rev{p}$; that is, primes $p$ which are written backwards in a fixed base $b\geq 2$. In particular, we study a variant of Goldbach's conjecture, looking at representations of integers as the sum of primes and reversed primes. We show that:
    \begin{enumerate}
        \item Every large odd integer is the sum of a prime and two reversed primes ($N=p_1+\rev{p_2}+\rev{p_3}$).
        \item Every large odd integer is the sum of two primes and a reversed prime ($N=p_1+p_2+\rev{p_3}$).
        \item Almost all even integers are the sum of a prime and a reversed prime ($N=p_1+\rev{p_2}$).
        \item All large integers are the sum of a reversed prime and a square-free number ($N=\rev{p}+\eta$, $\mu^2(\eta)=1$).
    \end{enumerate}
    To obtain our results, along with associated asymptotics, we apply the Hardy--Littlewood circle method and a novel refinement of the ``Zsiflaw--Legeis" theorem on the distribution of reversed primes in arithmetic progressions. Notably, our variant of the Zsiflaw--Legeis theorem does not require one to fix the digit length, unlike previous versions.
\end{abstract}

\maketitle 

\freefootnote{\textit{Corresponding author}: Daniel Johnston (daniel.johnston@unsw.edu.au).}
\freefootnote{\textit{Affiliation}: School of Science, The University of New South Wales Canberra, Australia.}
\freefootnote{\textit{Key phrases}: Reversed primes, Goldbach's conjecture, circle method, Siegel--Walfisz theorem.}
\freefootnote{\textit{2020 MSC codes}: 11A63, 11P32 (Primary) 11L07, 11N13, 11P55 (Secondary).}

\section{Introduction}
\subsection{Reversed primes and Hcabdlog's conjecture}
For a fixed base $b\geq 2$, a \emph{reversed prime} $\rev{p}$ is a prime $p$ written backwards in base $b$. For example, in base $10$,
\begin{equation*}
    32=\rev{23},\quad 145=\rev{541},\quad 7681=\rev{1867},
\end{equation*}
are reversed primes. Historically, reversed primes have been studied as part of recreational and elementary number theory. However, in recent years there have been several new studies on the analytic properties of reversed primes; see~\cite{dartyge2024reversible,bhowmik2024telhcirid,chourasiya2025power,santana2025,dartyge2025prime,bhowmik2025zsiflaw}. Here, we focus on the additive theory of reversed primes, proving the following four results, which are variations of Goldbach's conjecture.

\begin{theorem}\label{tern1thm:intro}
    In any fixed base $b\geq 2$, every sufficiently large odd integer $N$ can be expressed as the sum of a prime and two reversed primes:
    \begin{equation*}
        N=p_1+\rev{p_2}+\rev{p_3}.
    \end{equation*}
\end{theorem}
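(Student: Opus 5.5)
We plan to prove Theorem~\ref{tern1thm:intro} with the Hardy--Littlewood circle method. The prime $p_1$ supplies the minor arc saving and the reversed primes supply the major arc information. Fix a large odd $N$ and let $k$ be the number of base-$b$ digits of $N$. We restrict $\rev{p_2},\rev{p_3}$ to lie in a range like $[\delta N, N/3]$; this amounts to fixing the top few digits of $p_2,p_3$, or equivalently their last digits after reversal. We also take $p_1\le N$. Define the weighted exponential sums
\begin{equation*}
S(\alpha)=\sum_{n\le N}\Lambda(n)e(n\alpha),\qquad R(\alpha)=\sum_{\substack{p\\ \rev{p}\le N/3}}(\log p)\,e(\rev{p}\,\alpha).
\end{equation*}
Then the weighted number of representations is $r(N)=\int_0^1 S(\alpha)R(\alpha)^2e(-N\alpha)\,d\alpha$. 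Since $\rev{p}$ and $p$ have essentially the same number of digits (up to trailing zeros, which occur only when $b\mid p$ and can be discarded), $R(0)\asymp N$. We use Dirichlet approximation with major arcs $\fM$, consisting of $|\alpha-a/q|\le Q/N$ with $q\le Q=(\log N)^B$, and let the minor arcs $\fm$ be the complement.

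On the major arcs we need asymptotics for both sums near $a/q$. For $S$ this is the classical Siegel--Walfisz theorem. For $R$ we need the distribution of $\rev{p}$ in residue classes mod $q$, uniformly for $q\le(\log N)^B$, together with partial-summation information on the size of $\rev{p}$. This is exactly what the refined Zsiflaw--Legeis theorem announced in the abstract is meant to provide. It gives equidistribution of $\rev{p}$ among residues mod $q$, with the obvious local obstructions from $\gcd(q,b)$ and from the leading digit of $p$ becoming the last digit of $\rev{p}$. Importantly, it does not require fixing the digit length, so it also controls intervals $\rev{p}\le x$. Inserting these asymptotics gives the main term $\mathfrak{S}(N)\,\mathfrak{J}(N)$, where $\mathfrak{J}(N)\asymp N^2$ is a singular integral for the density of reversed primes in $[\delta N,N/3]$. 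The singular series $\mathfrak{S}(N)$ has local factors for $p\nmid b$ as in Vinogradov's theorem; these are bounded below for odd $N$ because the three-prime local factors at $p=2$ are positive. For primes dividing $b$, the reversed-prime local densities are explicit, and we must check positivity. For $p\mid b$, $\rev{p}$ mod $p$ is the leading digit of $p$ mod $p$, which can be chosen freely among the nonzero digits. So the local equation $N\equiv p_1+\rev{p_2}+\rev{p_3}$ is solvable with unit $p_1$, which gives positivity.

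On the minor arcs we bound
\begin{equation*}
\int_{\fm}|S R^2|\le \sup_{\fm}|S|\int_0^1|R|^2\ll \frac{N}{(\log N)^{B/2-4}}\cdot N\log N,
\end{equation*}
using Vinogradov's minor arc estimate for $S$ and Parseval for $R$. This is $o(N^2)$ for $B$ large. The minor arcs are therefore easy, and they are the reason we take one ordinary prime. The main obstacle is the major arc analysis of $R$. It needs a Siegel--Walfisz-type statement for reversed primes that is uniform in $q\le(\log N)^B$, valid across a digit-length boundary, and uniform in the short-range twist $e(\rev{p}\beta)$ with $|\beta|\le Q/N$. The twist is handled by partial summation once we have counts for $\#\{p:\rev{p}\le x,\ \rev{p}\equiv a \bmod q\}$ for all $x\le N$. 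I would therefore first prove the refined Zsiflaw--Legeis theorem in that form, with an error term of $O(x(\log x)^{-A})$. Then I would assemble the asymptotic $r(N)=\mathfrak{S}(N)\mathfrak{J}(N)+o(N^2)$ with $\mathfrak{S}(N)\gg1$, which gives $r(N)>0$.
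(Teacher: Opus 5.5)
Your architecture matches the paper's proof via Theorem~\ref{ternaryhcabthm1}. You use the same arcs, Vinogradov plus Parseval on $\fm$, and the refined Zsiflaw--Legeis theorem plus partial summation on $\fM$, giving the main term $\fS_3(N)\cS_{1,2}(N)$ with $\cS_{1,2}(N)\gg_b N^2$. The problem is your local analysis, which is wrong as written, although positivity for odd $N$ still holds once it is corrected.

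\textbf{The relevant modulus is $b^3-b$, not $b$.}
\begin{itemize}
\item Since $b^2\equiv 1\pmod{b^2-1}$, one has $\rev{p}\equiv b^{L-1}p\pmod{b^2-1}$ (Lemma~\ref{b21lem}). So reversed primes avoid every class sharing a factor with $b^2-1$; in base $10$, $3\nmid\rev{p}$ and $11\nmid\rev{p}$. Your description of the Zsiflaw--Legeis input, with obstructions only from $\gcd(q,b)$, misses this.
\item Conversely, for a prime $\ell\nmid b^3-b$, reversed primes are equidistributed over \emph{all} residues mod $\ell$, including $0$ (e.g.\ $\rev{19}=91=7\cdot 13$). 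So the local factor at such $\ell$ is $1$, not Vinogradov's factor.
\item Concretely, Lemma~\ref{lemma: exponential sum} gives $\rev{\E}_b(q)=\mu(q)\varphi(q)^{-1}\mathbb{1}_{q\mid b^3-b}$. The singular series is therefore the \emph{finite} product $\fS_3(N)$ over $p\mid b^3-b$, with Vinogradov-type factors at exactly those primes, and it is $\geq C_2>0$ for odd $N$.
\end{itemize}
Your claims ``obstructions only from $\gcd(q,b)$'' and ``Vinogradov factors at every $p\nmid b$'' contradict each other, and each is false on part of the primes.

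\textbf{You do not impose $(\rev{p},b^3-b)=1$.}
\begin{itemize}
\item Without it, $\rev{p}\bmod b$ is the leading digit of $p$, equidistributed on $\{1,\dots,b-1\}$. Hence $\rev{p}\bmod b^k$ is equidistributed over residues \emph{not divisible by $b$}.
\item This is not a product of local distributions over the primes dividing $b$. In base $10$, $\rev{p}$ can be even ($\rev{23}=32$) or divisible by $5$ ($\rev{53}=35$), but is never divisible by $10$.
\item So your prime-by-prime solvability argument at $p\mid b$ does not compute the local density; one must work modulo $b$ jointly.
\item Theorem~\ref{thm: smooth zsiflaw legeis} concerns only $\rev{\vartheta}^*$, so it does not give the asymptotic your unrestricted $R$ needs.
\end{itemize}
The fix costs nothing: restrict to $(\rev{p_2}\rev{p_3},b^3-b)=1$ as the paper does, which still leaves weight $\gg_b N$ for each reversed-prime sum.

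\textbf{Two smaller points.}
\begin{itemize}
\item Confining $\rev{p}$ to an interval fixes the \emph{trailing} digits of $p$ (a class $p\equiv\rev{r}\pmod{b^\eta}$), not its top digits. The cutoff $\delta N$ is unnecessary anyway.
\item If you intend to prove the refined Zsiflaw--Legeis theorem yourself rather than cite it, that is where the real work lies. The paper splits $\rev{p}\le x$ into blocks $[rb^{L-\eta},(r+1)b^{L-\eta})$ with $b^\eta\approx(\log x)^{A+1}$. It then combines Siegel--Walfisz for the main term with an exponential-sum bound over primes twisted by $e(h\rev{n}/q)$ under an extra congruence condition (Lemma~\ref{newerrlem}).
\end{itemize}
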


\begin{theorem}\label{tern2thm:intro}
    In any fixed base $b\geq 2$, every sufficiently large odd integer $N$ can be expressed as the sum of two primes and a reversed prime:
    \begin{equation*}
        N=p_1+p_2+\rev{p_3}.
    \end{equation*}
\end{theorem}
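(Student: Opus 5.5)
We will prove the statement with the Hardy--Littlewood circle method, writing $N=p_1+p_2+\rev{p_3}$. The key observation is that two of the three variables are ordinary primes. Their exponential sum $S(\alpha)=\sum_{p\le N}(\log p)e(p\alpha)$ already has strong minor-arc bounds (Vinogradov/Vaughan). So we need essentially no Fourier-analytic control over the reversed primes, only their distribution in arithmetic progressions to small moduli.

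Concretely, restrict $p_3$ to primes with a fixed digit length $k$ such that $b^{k-1}\le \rev{p_3}\le N/2$, say with $b^{k}\asymp N$. Set
\begin{equation*}
R(\alpha)=\sum_{p:\,\rev{p}\le N/2}(\log p)\,e(\rev{p}\alpha),
\end{equation*}
and count
\begin{equation*}
r(N)=\sum_{\substack{p_1+p_2+\rev{p_3}=N}}\log p_1\log p_2\log p_3=\int_0^1 S(\alpha)^2R(\alpha)e(-N\alpha)\,d\alpha .
\end{equation*}
Take major arcs $\fM$ around $a/q$ with $q\le Q=(\log N)^B$ and width $Q/N$, and let $\fm$ be the complement.

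\textbf{Minor arcs.} Use the trivial bound $|R(\alpha)|\le \sum_{p}\log p\ll N$ together with Parseval and the minor-arc bound:
\begin{equation*}
\int_{\fm}|S|^2|R|\ll \sup_{\fm}|S(\alpha)|\int_0^1|S||R|\le \sup_{\fm}|S|\,\|S\|_2\|R\|_2 .
\end{equation*}
Here $\|S\|_2^2\ll N\log N$. The map $p\mapsto\rev{p}$ is injective on primes of fixed digit length not divisible by $b$, so $\|R\|_2^2\ll N\log N$ as well. Vaughan's bound $\sup_{\fm}|S|\ll N(\log N)^{-B/2+4}$ then gives $\int_{\fm}\ll N^2(\log N)^{-A}$ for $B$ large.

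\textbf{Major arcs.} On $\alpha=a/q+\beta$, the Siegel--Walfisz theorem gives
\begin{equation*}
S(\alpha)=\frac{\mu(q)}{\varphi(q)}\,v(\beta)+O\big(N(\log N)^{-C}\bigr)
\end{equation*}
in the usual way. For $R$, split $\rev{p}$ into residue classes mod $q$. We need the refined Zsiflaw--Legeis theorem promised in the abstract: reversed primes up to $x$, with no fixed digit length, are equidistributed in residue classes $c$ mod $q$ for $q\le(\log x)^B$, after accounting for the local obstructions. Those obstructions are that $\rev{p}$ has its leading-digit behaviour and that $\gcd(\rev{p},b)$ is governed by the last digit of $p$, i.e.\ the first digit of $\rev p$. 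This gives
\begin{equation*}
R(a/q+\beta)=\sum_{c\bmod q}e(ac/q)\,\rho_q(c)\,w(\beta)+\text{error},
\end{equation*}
where $\rho_q(c)$ is the asymptotic local density and $w$ is a smooth-sum model obtained by partial summation, using equidistribution of $\rev p$ in short intervals/initial segments.

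Assembling the three expansions yields $r(N)=\mathfrak S(N)\,J(N)+O(N^2(\log N)^{-A})$, with $J(N)\asymp N^2$. The singular series factors into local factors. For primes $\ell\nmid b$, the density $\rho_\ell$ is just the uniform density on units (reversal does not see $\ell$), so the factor is the usual ternary Goldbach factor $1+1/(\ell-1)^3$ or $1-1/(\ell-1)^2$; these are positive for odd $N$, with $\ell=2$ handled by parity. For $\ell\mid b$, the residue of $\rev p$ mod $\ell^j$ is determined by the leading digits of $p$, which are essentially free. So $\rev p$ covers all classes mod $\ell^j$ (including non-units) with positive density, and the local factor is bounded below.

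\textbf{Main obstacle.} The hard step is the major-arc input for $R$: an asymptotic for $\sum_{p\le x,\ \rev{p}\equiv c \ (q)}\log p$ uniform in $q\le(\log N)^B$ and not restricted to a single digit length, with explicit local densities. If that is too hard, I would first work within one digit length, choose $k$ with $b^{k-1}<N/2<b^k$, and use the fixed-length Zsiflaw--Legeis theorem from the literature. The cost is a more awkward weight $w(\beta)$, whose singular integral still needs to be checked to be $\gg N^2$.
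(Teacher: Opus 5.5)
Your architecture is the one the paper uses for Theorem~\ref{ternaryhcabthm2}. It has the same integral $\int_0^1 S(\alpha)^2\rev{S}_b(\alpha)e(-N\alpha)\,\mathrm{d}\alpha$ and the same minor-arc bound $\sup_{\fm}|S|\,\|S\|_2\|\rev{S}_b\|_2$. On the major arcs it likewise uses Siegel--Walfisz for $S$ and the refined Zsiflaw--Legeis theorem (Theorem~\ref{thm: smooth zsiflaw legeis}); the paper proves the latter by counting reversed primes in progressions inside leading-digit blocks (Proposition~\ref{prop: partitioned Zsiflaw Legeis}). The problems are in your main-term analysis.

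\textbf{Primes $\ell\nmid b^3-b$.} Here reversed primes are equidistributed over \emph{all} residues mod $\ell$, including $0$, not over the units (in base $10$, $\rev{19}=91=7\cdot 13$). So the complete sum $\frac{1}{\ell}\sum_{c}e(ac/\ell)$ vanishes and the local factor is $1$. Only for $\ell\mid b^2-1$ does the congruence $\rev{n}\equiv b^{L-1}n \pmod{b^2-1}$ (Lemma~\ref{b21lem}) confine $\rev{p}$ to units and produce Goldbach-type factors. Hence the singular series is a finite product over $\ell\mid b^3-b$, not the ternary Goldbach product. Your stated asymptotic has the wrong constant; positivity survives, but not by your reasoning.

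\textbf{Primes $\ell\mid b$.} Because you do not impose $(\rev{p_3},b)=1$, the class of $\rev{p}$ mod $b$ is the leading digit of $p$. With log weights this is uniform on $\{1,\dots,b-1\}$, which is not a product of distributions over the primes $\ell\mid b$. For $b=10$, the class $5$ mod $10$ has density $1/9$, whereas (density of odd)$\times$(density of $0$ mod $5$) $=5/81$. So your series does not factor into local factors at $\ell\mid b$. Your claim that every class mod $\ell^j$ occurs also fails for prime $b$, since always $\rev{p}\not\equiv 0$ mod $b$.

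\textbf{The paper's fix.} It counts only $\rev{p_3}$ with $(\rev{p_3},b^3-b)=1$, which costs nothing for an existence statement. Theorem~\ref{thm: smooth zsiflaw legeis} then gives residue densities $\frac{(q,b^3-b)}{\varphi((q,b^3-b))}\frac{\rho_b(c,q)}{q}$ relative to $\#\fB(x)$. Lemma~\ref{lemma: exponential sum} collapses the residue sum to $\rev{\E}_b(q)=\frac{\mu(q)}{\varphi(q)}\mathbb{1}_{q\mid b^3-b}$. The model weight is $\rev{v}_b(\beta)=\sum_{n\le N,\,(\rev{n},b)=1}e(n\beta)$. The main term is $\fS_3(N)\cS_{2,1}(N)$, with $\fS_3(N)>C_2$ for odd $N$ and $\cS_{2,1}(N)\asymp_b N^2$ (Lemma~\ref{S12S21lem}).

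\textbf{Your fallback.} Using the fixed-length Zsiflaw--Legeis theorem would not work. On arcs of width $Q/N$ the factor $e(\rev{p}\beta)$ oscillates about $Q$ times across a digit block. You therefore need counts of reversed primes in progressions over initial segments, which is exactly what the refinement supplies. Shrinking the arcs instead leaves arcs near $a/q$ on which $S(\alpha)$ has size about $N/\varphi(q)$, so the minor-arc argument breaks down.
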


\begin{theorem}\label{almostthm:intro}
    For any fixed base $b\geq 2$ and $A>0$, all but $O_{b,A}(x/(\log x)^A)$ even integers $N\leq x$ can be expressed as the sum of a prime and a reversed prime:
    \begin{equation}\label{almostthmeq}
        N=p_1+\rev{p_2}.
    \end{equation}
    That is, almost all even $N$ can be expressed in the form~\eqref{almostthmeq}.   
\end{theorem}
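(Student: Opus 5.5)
We prove Theorem~\ref{almostthm:intro} with the binary form of the Hardy--Littlewood circle method and a variance (Bessel/Parseval) estimate over the minor arcs, in the style of the classical proofs that almost all even integers are sums of two primes. Set $L=\log x$ and take $N\le x$ even; to handle varying $N$ we may also restrict to dyadic ranges $x/2<N\le x$. Define the exponential sums
\begin{equation*}
    S(\alpha)=\sum_{p\le x}(\log p)\,e(p\alpha),\qquad \rev{S}(\alpha)=\sum_{p\le x,\ \rev{p}\le x}(\log p)\,e(\rev{p}\alpha).
\end{equation*}
Let $R(N)=\int_0^1 S(\alpha)\rev{S}(\alpha)e(-N\alpha)\,d\alpha$ be the weighted count of representations $N=p_1+\rev{p_2}$. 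Fix $B=B(A)$ large and put $Q=L^B$. The major arcs $\fM$ are the intervals $|\alpha-a/q|\le Q/x$ with $q\le Q$ and $(a,q)=1$; the minor arcs $\fm$ are the rest.

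\textbf{Major arcs.} On $\fM$ we approximate $S(\alpha)$ by the Siegel--Walfisz theorem in the usual way. We approximate $\rev{S}(\alpha)$ using the refined Zsiflaw--Legeis theorem announced in the abstract, which gives the distribution of $\rev{p}$ in residue classes modulo $q\le L^B$ uniformly, without fixing the digit length. After partial summation this shows that $\rev{S}(a/q+\beta)$ is approximated by a singular-series factor times a smooth sum $\sum_{n\in\cR}e(n\beta)$, where $\cR$ is the set of $n\le x$ with $\rev{n}$ essentially ranging over all integers with the right digit constraints. Primes $p$ divisible by $b$ do not exist, so reversed primes have nonzero leading digit. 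The extra local conditions come from $\gcd(\rev{p},b)$ and from divisibility by $b-1$ and $b+1$, which digit reversal interacts with. Multiplying the two approximations and integrating gives
\begin{equation*}
    R(N)=\fS(N)J(N)+O(x/L^{A+1})
\end{equation*}
for all $N$, where $\fS(N)$ is a singular series and $J(N)\gg x$ for $N$ in the dyadic range. We must check that $\fS(N)\gg 1$ for the admissible $N$, i.e.\ all even $N$. Any residual obstruction for small $b$ (for example $b=2$, where the parities fit) is dealt with by the local computation modulo divisors of $b(b^2-1)$. The possible Siegel zero contribution is handled in the standard way. Alternatively, since we only need an almost-all result, we can take $Q$ small enough that Siegel--Walfisz applies without exceptional terms.

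\textbf{Minor arcs.} By Bessel's inequality,
\begin{equation*}
    \sum_{N\le x}\Big|\int_{\fm}S\rev{S}e(-N\alpha)\,d\alpha\Big|^2\le\int_{\fm}|S(\alpha)|^2|\rev{S}(\alpha)|^2\,d\alpha\le \sup_{\alpha\in\fm}|\rev{S}(\alpha)|^2\int_0^1|S|^2\ll x L\sup_{\fm}|\rev{S}|^2.
\end{equation*}
It therefore suffices to prove $\sup_{\alpha\in\fm}|\rev{S}(\alpha)|\ll x/L^{C}$ with $C$ large in terms of $A$. The exceptional set then has size at most $x^3L^{1-2C}/(x/L^{A})^2\ll x/L^A$. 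Alternatively, we swap roles and use the sup bound for the classical $S$ on $\fm$, which is available via Vinogradov, together with the mean square $\int_0^1|\rev{S}|^2\le\sum_{n}(\text{number of }p\text{ with }\rev{p}=n)\ll xL$. Since reversal is injective on integers with nonzero last digit, the mean-square bound is immediate. This second route avoids needing a minor-arc estimate for $\rev{S}$ at all, and it is the one we plan to use.

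The main obstacle is therefore the major-arc analysis of $\rev{S}$. We need the refined Zsiflaw--Legeis equidistribution of reversed primes in progressions modulo $q\le L^B$, with error $x/L^{B'}$ for arbitrary $B'$. This estimate must be uniform across all digit lengths $k$ with $b^{k}\le x$ (summing over $k$, the top range $b^{k-1}\le p<b^k$ truncated by $\rev{p}\le x$). We also need to turn it into an asymptotic for $\rev{S}(a/q+\beta)$ with $|\beta|\le Q/x$. Here we plan to split into digit-length blocks, apply the theorem to $\rev{p}$ restricted to subintervals of length $x/L^{B''}$ on which $e(n\beta)$ is nearly constant, and reassemble. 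After that, verifying positivity of the singular series for even $N$ is a finite local computation.
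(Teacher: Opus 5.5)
Your plan is essentially the paper's proof. The paper likewise runs a dyadic, mean-square form of the binary circle method. On the major arcs, $\rev{S}$ is handled by the refined Zsiflaw--Legeis theorem (Theorem~\ref{thm: smooth zsiflaw legeis}) plus partial summation. On the minor arcs it uses Bessel's inequality, Vinogradov's bound for $\sup_{\fm}|S|$, and the mean square $\int_0^1|\rev{S}|^2\ll xL$. Your fixed cutoff $x$ in both exponential sums is, if anything, cleaner for Bessel than letting them depend on $N$.

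The one adjustment is to build the condition $(\rev{p},b^3-b)=1$ into $\rev{S}$, as the paper does. The refined theorem is only proved for that restricted count, and the excluded $\rev{p}$ with $(\rev{p},b)>1$ form a positive proportion when $b$ is composite. The restriction is harmless for an almost-all statement, and it gives the explicit singular series $\fS_2(N)$, which is positive for even $N$.
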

\begin{theorem}\label{sqfreethm:intro}
    In any fixed base $b\geq 2$, every sufficiently large integer $N$ can be expressed as the sum of a reversed prime and a square-free number:
    \begin{equation*}
        N=\rev{p}+\eta,\qquad \eta\ \text{square-free.}
    \end{equation*}
\end{theorem}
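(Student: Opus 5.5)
The approach is to count representations $N=\rev{p}+\eta$ with $\eta$ square-free and show the count is positive for large $N$. I will use the identity $\mu^2(n)=\sum_{d^2\mid n}\mu(d)$. Let $x=N$ and consider
\begin{equation*}
    R(N)=\sum_{\substack{p:\ \rev{p}<N}}\mu^2(N-\rev{p})=\sum_{d\leq \sqrt{N}}\mu(d)\,\#\{p:\ \rev{p}<N,\ \rev{p}\equiv N \pmod{d^2}\}.
\end{equation*}
I will split the $d$-sum at a parameter $D=(\log N)^{B}$. Since $\rev{p}$ is defined by reversing digits, I restrict to primes $p$ with $b\nmid p$, so that $p$ and $\rev{p}$ have the same number of digits. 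For digit length $k$ with $b^{k}\le N$, the reversed primes of length $k$ form a set of size $\asymp b^k/k$. I will choose the range of $p$ so that $\rev{p}<N$ holds automatically, for instance by taking all $p$ with fewer digits than $N$ (giving $\gg N/(b\log N)$ terms). This avoids having to handle a range cut at $N$ itself.

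\textbf{Small moduli} ($d\le D$). Here I invoke the refined Zsiflaw--Legeis theorem announced in the abstract. It gives equidistribution of $\rev{p}$ in progressions modulo $q=d^2$, uniformly for $q\le(\log N)^{2B}$ and without fixing the digit length. Its main term is $\frac{1}{\varphi(q)}$ times the count, restricted to residues coprime to the appropriate part. When $\gcd(q,b)>1$, the residue of $\rev{p}$ mod divisors of powers of $b$ is governed by the leading digits of $p$, so the local densities are not simply $1/\varphi(q)$. I will therefore write the main term as $\#\{\rev p\}\cdot \rho(d^2;N)$ for an explicit local density $\rho$. Summing $\mu(d)\rho(d^2;N)$ gives a singular series $\mathfrak{S}(N)=\prod_{\ell}(1-\rho_\ell(\ell^2;N))$. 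I must check that $\mathfrak{S}(N)\gg_b 1$. For $\ell\nmid b$ with $\ell\ge3$, $\rho_\ell\le 1/(\ell^2-\ell)<1$. For $\ell=2$ there is a danger if reversed primes can all be $\equiv N \pmod 4$. However, reversed primes with last digit varying (the leading digit of $p$) take both residues mod $4$ in suitable bases, or for $b$ even the last digit of $\rev p$ is arbitrary. Hence $\rho_\ell(\ell^2)<1$ always, and the product converges since $\rho_\ell(\ell^2)\ll \ell^{-2}$.

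\textbf{Large moduli} ($D<d\le\sqrt N$). I bound trivially by dropping primality and counting integers: $\#\{n<N: n\equiv N\ (d^2)\}$, restricted to the values $\rev{p}$. This gives at most $\sum_{d>D}(N/d^2+1)$. The $+1$ summed up to $\sqrt N$ contributes $\sqrt N$, which is fine. But $\sum_{d>D} N/d^2\approx N/D$ is not smaller than the main term $N/\log N$ unless I retain primality. To retain it, I use the Brun--Titchmarsh-type inequality for reversed primes in progressions, namely an upper-bound sieve on $\rev{p}$ (equivalently on $p$ via digit reversal, since the reversal map on integers coprime to $b$ of fixed length is a bijection). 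This should give $\ll N/(\varphi(d^2)\log N)$ for $d^2\le N^{1/2}$. Summing over $D<d\le N^{1/4}$ then contributes $\ll N/(D\log N)$. For $N^{1/4}<d\le\sqrt N$, I count all $n<N$: $\sum N/d^2\ll N^{3/4}$.

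\textbf{Main obstacle.} The main obstacle is the middle range. Reversal does not preserve congruences mod $d^2$, so a sieve upper bound for $\rev{p}$ in a progression needs care. I would try the large sieve / Selberg sieve directly on the set $\{n \text{ of length }k: \rev{n}\equiv a\ (q)\}$, using that this set is well distributed in progressions to small moduli. I expect this to follow from the digit-exponential-sum estimates underlying the Zsiflaw--Legeis theorem. Alternatively, I could take $D=N^{\delta}$ if the refined theorem has a power-saving level of distribution in averaged (Bombieri--Vinogradov) form over moduli $d^2$. Combining the three ranges gives $R(N)=\mathfrak{S}(N)\#\{p\}(1+o(1))>0$.
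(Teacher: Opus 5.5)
Your skeleton matches the paper's proof: expand $\mu^2(N-\rev{p})=\sum_{d^2\mid N-\rev{p}}\mu(d)$, treat $d\le(\log N)^{B}$ with the Zsiflaw--Legeis theorem, and bound larger $d$ trivially. However, the step you call the main obstacle rests on an arithmetic slip. As written, your range $D<d\le N^{1/4}$ depends on a Brun--Titchmarsh inequality for reversed primes that you do not prove.

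No such input is needed. With $D=(\log N)^B$, the trivial count gives
\[
\sum_{D<d\le\sqrt N}\Big(\frac{N}{d^2}+1\Big)\ll \frac{N}{(\log N)^{B}}+\sqrt N ,
\]
which is $o(N/\log N)$ as soon as $B>1$. There are only two constraints on $B$. First, $d^2\le(\log N)^{2B}$ must lie in the admissible range $q\le\exp(c\sqrt L)$. Second, the per-modulus error $O_A(N/(\log N)^A)$, summed over $d\le D$, must be negligible. Both hold for any fixed $B$ once $A$ is taken large in terms of $B$. This is exactly the paper's argument; there the sums carry $\log p$ weights, so the main term is $\asymp N$ and the cutoff $(\log N)^{A+1}$ gives $s_2(N)\ll N/(\log N)^A$. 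Delete the sieve discussion and the argument closes.

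Two smaller corrections concern the main term.

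First, the refined theorem (Theorem~\ref{thm: smooth zsiflaw legeis}) is stated only for reversed primes with $(\rev{p},b^3-b)=1$. Since you only need positivity, restrict to these by dropping nonnegative terms. This also replaces your vague ``explicit local density $\rho$'' with concrete values.

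Second, the density is not $1/\varphi(q)$ in general. For $\ell\nmid b^3-b$, reversed primes are equidistributed over \emph{all} residues mod $\ell^2$, with density $1/\ell^2$ including the residue $0$. For $\ell\mid b^3-b$, the density is $1/\varphi(\ell^2)$ on residues prime to $\ell$ and $0$ otherwise. This settles your worry at $\ell=2$ directly: $2\mid b^3-b$, so the density of $\rev{p}\equiv N\pmod 4$ is $0$ or $1/2$. Hence every local factor is at least $1/2$, and the singular series is $\gg_b 1$; it is the paper's $\fS_\square(N)/\zeta(2)$. Your heuristic about the last digit of $\rev{p}$ is unnecessary.

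Summing only over complete digit lengths below $N$ is fine. It lets you use the fixed-length Theorem~\ref{thm:modifiedZsiflaw} instead, which the paper remarks suffices for the weaker statement $\cR_\square(N)>0$.
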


Asymptotic versions of Theorems~\ref{tern1thm:intro},~\ref{tern2thm:intro} and \ref{sqfreethm:intro} are given later; see Theorems~\ref{ternaryhcabthm1},~\ref{ternaryhcabthm2} and~\ref{sqfreethm}. We remark that our results are inspired by other recent works combining additive number theory with the digital properties of integers. For example, studies on representing integers as the sum of palindromes~\cite{banks2016every,cilleruelo2018every,raj2020,zakharov2024}, the sum of primes with missing digits~\cite{leng2025}, or the sum of Niven numbers~\cite{sanna2021additive,thomas2025niven}.

To understand what further results should be true, we now state Hcabdlog's conjecture\footnote{Hcabdlog is Goldbach spelt backwards. A similar naming convention is used in other results throughout the literature and this paper.}, introduced in~\cite[\S 4.2]{chourasiya2025power}. Notably, we give a simplified variant of~\cite[Conjecture~4.2]{chourasiya2025power} upon restricting to reversed primes with ${\gcd(\rev{p},b^3-b)=1}$. Including this coprimality condition simplifies many arguments and is ubiquitously used throughout the literature. See for example the introduction of~\cite{bhowmik2024telhcirid}, where the relationship between $n$ and $\rev{n}$ mod $b^3-b$ is discussed in detail.
\begin{conjecture}[Hcabdlog's conjecture]\label{hcabcon}
    For any fixed base $b\geq 2$, every sufficiently large even number $N$ can be expressed as
    \begin{equation}\label{hcabeq}
        N=p_1+\rev{p_2},
    \end{equation}
    where $p_1$ and $p_2$ are prime, and $\gcd(\rev{p_2},b^3-b)=1$.
\end{conjecture}
Conjecture~\ref{hcabcon} thus suggests that the exceptional set in Theorem~\ref{almostthm:intro} should in fact be $O_b(1)$. Note that the restriction to even $N$ is required since $b^3-b$ is always even so that $\rev{p_2}$ is always odd. This also explains the parity restrictions in Theorems~\ref{tern1thm:intro},~\ref{tern2thm:intro} and~\ref{almostthm:intro}. Later, we also give an asymptotic version of Hcabdlog's conjecture based on circle method heuristics (Conjecture~\ref{asymhcabcon}).

It is also natural to wonder whether every large even $N$ can be expressed as the sum of two reversed primes, that is
\begin{equation}\label{purehcabdlogeq}
    N=\rev{p_1}+\rev{p_2}.
\end{equation}
However, \eqref{purehcabdlogeq} fails infinitely often for infinitely many bases $b\geq 2$, including the ``human base" $b=10$. The reason for this is that the leading (i.e.~most significant) digit of a reversed prime is coprime to $b$. This leads to large gaps between reversed primes that can make~\eqref{purehcabdlogeq} impossible to satisfy. However, it seems that~\eqref{purehcabdlogeq} should hold when $b$ is a prime, and that a ternary version should hold in most bases, including $b=10$.
\begin{conjecture}\label{primebasecon}
    Let $b$ be a fixed prime. Then every sufficiently large even integer $N$ can be expressed as the sum of two reversed primes:
    \begin{equation*}
        N=\rev{p_1}+\rev{p_2}.
    \end{equation*}
\end{conjecture}
\begin{conjecture}\label{b10con}
    Let $b=10$. Then every sufficiently large odd integer $N$ can be expressed as the sum of three reversed primes:
    \begin{equation*}
        N=\rev{p_1}+\rev{p_2}+\rev{p_3}.
    \end{equation*}
\end{conjecture}

Further discussion on these conjectures is relegated to the final section of the paper (Section~\ref{sumsect}). In what follows, we instead focus on the representation of integers as a \emph{mix} of primes and reversed primes.

In the following subsections we state our main technical results. Firstly, in Section~\ref{ZLsect} we introduce additional notation and give a new asymptotic result for reversed primes in arithmetic progressions. This asymptotic offers additional flexibility compared to previous such results in the literature. In Section~\ref{Appsect}, we then state asymptotic versions of Theorems~\ref{tern1thm:intro},~\ref{tern2thm:intro} and~\ref{sqfreethm:intro}, and Conjecture~\ref{hcabcon}. An outline of the rest of the paper is then given in Section~\ref{outlinesect}.

\subsection{Further notation and a refined Zsiflaw--Legeis theorem}\label{ZLsect}
To make our definitions more precise we define
\begin{equation*}
    \mathcal{B}_L:=\{n\in\mathbb{N}:b^{L-1}\leq n<b^L\}
\end{equation*}
to be the set of positive integers with $L$ digits in a fixed base $b\geq 2$. Then, for an integer $n\in\cB_L$ with base-$b$ expansion
\begin{equation*}
    n=\sum_{0\leq i<L}\varepsilon_i(n)b^i,\qquad \varepsilon_i(n)\in\{0,\ldots,b-1\},\ \varepsilon_{L-1}(n)\neq 0
\end{equation*}
we write
\begin{equation*}
    \rev{n}=\sum_{0\leq i<L}\varepsilon_i(n)b^{L-1-i}
\end{equation*}
for the \emph{digital reverse} of $n$. As mentioned earlier, it is often easier to restrict to studying reversed numbers in
\begin{align}\label{Bstardefs}
    \mathcal{B}_L^*:=\{n\in\mathcal{B}_L:(n,b^3-b)=1\}
\end{align}
where we have used the standard notation $(a,b):=\gcd(a,b)$.

Now, in order to prove Theorems~\ref{tern1thm:intro}--\ref{sqfreethm:intro}, we require bounds for the number of reversed primes in arithmetic progressions. In the literature, this has been studied through the counting functions
\begin{equation*}
    \rev{\pi}_L(a,q):=\sum_{\substack{p\in\cB_L\\\rev{p}\equiv a\thinspace(\text{mod}\ q)}}1,\qquad \rev{\vartheta}_L(a,q):=\sum_{\substack{p\in\cB_L\\\rev{p}\equiv a\thinspace(\text{mod}\ q)}}\log p
\end{equation*}
or very similar variations thereof. In~\cite{bhowmik2024telhcirid}, an asymptotic for $\rev{\pi}_L(a,q)$, which the authors called the ``Zsiflaw--Legeis" theorem, was obtained for all bases $b\geq 31699$, and this was improved to $b\geq 2$ in both~\cite{dartyge2025prime} and~\cite{bhowmik2025zsiflaw}. These asymptotics are somewhat complicated, owing to $\rev{\pi}_L(a,q)$ and $\rev{\vartheta}_L(a,q)$ counting all reversed primes $\rev{p}$ rather than just those with $(\rev{p},b^3-b)=1$. We thus first take the opportunity to give a variant of the Zsiflaw--Legeis theorem for reversed primes coprime to $b^3-b$. Our result will be stated as an asymptotic for
\begin{equation}\label{thetastardef}
    \rev{\vartheta}^*_L(a,q):=\sum_{\substack{\rev{p}\in\cB_L^*\\\rev{p}\equiv a\thinspace(\text{mod}\ q)}}\log p.
\end{equation}
Notably, here and in what follows we state all of our results with logarithmic weights, since these are neater to work with. However, unweighted variants can readily be obtained via partial summation.
\begin{theorem}\label{thm:modifiedZsiflaw}
    Let $b,a,q\in\Z$ with $b\geq 2$ and $q\geq 1$. For sufficiently large $L\in\mathbb{N}$,
    \begin{equation}\label{Dartygenew}
        \rev{\vartheta}^*_L(a,q)=\frac{\varphi(b)}{b}\frac{(q,b^3-b)}{\varphi((q,b^3-b))}\thinspace \frac{\rho_b(a,q)}{q}b^{L}+O_b\left(b^L\exp\left(-c\sqrt{L}\right)\right)
    \end{equation} 
    provided
    \begin{equation*}
        q\leq\exp(c\sqrt{L}),
    \end{equation*}
    where $c>0$ is an effectively computable constant depending on $b$, $\varphi(\cdot)$ is the Euler totient function, and $\rho_b(a,q)$ is given by
    \begin{equation}\label{rhobdef}
        \rho_b(a,q):=
        \begin{cases}
            1,&\text{if $(a,q,b^3-b)=1$,}\\
            0,&\text{if $(a,q,b^3-b)>1$}.
        \end{cases}
    \end{equation}
\end{theorem}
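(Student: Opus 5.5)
The natural route is to reduce the count of reversed primes in a progression to a count of ordinary primes restricted by digital conditions, and then to use the existing Zsiflaw--Legeis machinery of~\cite{dartyge2025prime,bhowmik2025zsiflaw}. These results give asymptotics for $\rev{\vartheta}_L(a,q)$ uniformly for $q\le \exp(c\sqrt{L})$. I would first split the modulus as $q=q_1q_2$, where $q_1=(q,(b^3-b)^\infty)$ collects the primes dividing $b^3-b$ and $(q_2,b^3-b)=1$. Since $\rev{p}\in\cB_L$ for all but a negligible proportion of $p\in\cB_L$ (excluding $p$ with last digit $0$ is automatic), the condition $\rev{p}\in\cB_L^*$ means $(\rev{p},b^3-b)=1$ together with a leading-digit condition on $p$. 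The coprimality condition is a congruence condition on $\rev{p}$ modulo $b^3-b$, so
\begin{equation*}
\rev{\vartheta}^*_L(a,q)=\sum_{\substack{a'\bmod \mathrm{lcm}(q,b^3-b)\\ a'\equiv a\ (q),\ (a',b^3-b)=1}}\rev{\vartheta}_L(a',\mathrm{lcm}(q,b^3-b)).
\end{equation*}
This is a sum of $O_b(1)$ terms. Each term is covered by the known theorem, since $\mathrm{lcm}(q,b^3-b)\le (b^3-b)\exp(c\sqrt L)$, and this is absorbed by shrinking $c$.

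The second step is to evaluate the main terms. From the cited works, the main term of $\rev{\vartheta}_L(a',Q)$ is (up to the factor $\varphi(b)b^{L-1}$ coming from primes not divisible by $b$ with nonzero leading digit) of the form $b^L/Q$ times a local density at the primes dividing $(Q,b^3-b)$. This density arises because $\rev{n}\bmod b^3-b$ is determined by $n\bmod b^3-b$ together with $L\bmod 2$ (via $b^2\equiv1$ modulo $(b^2-1)$-type relations and $b\equiv0 \bmod b$). Restricting to $(a',b^3-b)=1$ makes this local factor uniform. Indeed, when $\rev{n}$ is a unit modulo $b^3-b$, the map $n\mapsto \rev{n}$ permutes the unit classes appropriately, so $p$ equidistributes among those classes by Siegel--Walfisz. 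Hence each admissible class gets weight $\frac{\varphi(b)}{b}\cdot\frac{1}{\varphi(b^3-b)}\cdot(\text{factor for }q_2)$. For the part coprime to $b^3-b$, the standard result gives density $1/q_2$, uniform in the residue class, because reversal there behaves like a uniform digit variable. Counting the admissible $a'$ then gives $\#\{a'\bmod \mathrm{lcm}(q_1,b^3-b): a'\equiv a\ (q_1),\ (a',b^3-b)=1\}$. This equals $\rho_b(a,q)\,\frac{\varphi(b^3-b)}{\varphi((q,b^3-b))}\cdot\frac{(q,b^3-b)}{q_1}$ by a CRT computation. Multiplying out yields $\frac{\varphi(b)}{b}\frac{(q,b^3-b)}{\varphi((q,b^3-b))}\frac{\rho_b(a,q)}{q}b^L$.

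The main obstacle is verifying that the cited asymptotics have exactly this uniform main term on unit classes modulo $b^3-b$ and depend on $L$ only through parity. The claimed independence from fixing the digit length suggests that the parity dependence cancels after restricting to units. If the literature's statement is too tangled, the fallback is to rerun their argument directly. In that argument we write $\sum_p \log p\,\mathbf 1[\rev{p}\equiv a\ (q)]$ via additive characters $e(h\rev{n}/q)$. The frequencies with $q/(q,b^3-b)\nmid h$ would be bounded using the Fourier $L^1$/$L^\infty$ digit estimates combined with a Vaughan-identity type-I/II treatment, following~\cite{dartyge2025prime}. The remaining frequencies reduce to $p$ modulo divisors of $b^3-b$, where Siegel--Walfisz with the $\exp(-c\sqrt L)$ error (Page's theorem, no exceptional zero issue since the moduli are $O_b(1)$ there) gives the main term. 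The genuinely delicate part is the minor-frequency bound uniform in $q\le\exp(c\sqrt L)$, which I would take from the previous works.
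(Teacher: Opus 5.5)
Your opening reduction, writing $\rev{\vartheta}^*_L(a,q)$ as a sum of $O_b(1)$ terms $\rev{\vartheta}_L(a',\mathrm{lcm}(q,b^3-b))$ over classes $a'$ coprime to $b^3-b$ and quoting the existing Zsiflaw--Legeis theorem, is legitimate. It also differs from the paper, which applies Proposition~\ref{prop: partitioned Zsiflaw Legeis} with $\eta=1$ and sums over the $\varphi(b)$ admissible $r$.

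\textbf{The main term is asserted, and the mechanism you give for it is false.} It is not true that $\rev{n}\bmod(b^3-b)$ is determined by $n\bmod(b^3-b)$ and $L\bmod 2$. The congruence $\rev{n}\equiv b^{L-1}n$ holds modulo $b^2-1$ (Lemma~\ref{b21lem}) but not modulo $b$: $\rev{n}\bmod b$ is the \emph{leading} digit of $n$. More generally, $\rev{n}\bmod(q,b^L)$ is fixed by the top $L_0$ digits of $n$, where $L_0$ is minimal with $(q,b^{L_0})=(q,b^L)$. So the uniformity of $\rev{p}$ over units modulo the $b$-part, and the factor $\varphi(b)/b$, do not come from Siegel--Walfisz for $p\bmod b$. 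They come from counting primes in the $b^{L_0}$ intervals $[v^*,v^*+b^{L-L_0})$. This needs $L_0\ll\sqrt{L}$, which follows from $q\le\exp(c\sqrt{L})$, so that the accumulated prime number theorem errors stay $O(b^L\exp(-c\sqrt{L}))$. That is exactly \eqref{thirdmaineq}--\eqref{FinalFinalMeq}, and your proposal has nothing in its place.

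\textbf{The bookkeeping is also wrong when $v_p(q)>v_p(b^3-b)$ for some $p\mid b^3-b$.} Your count of admissible $a'$ can be non-integral. For $b=2$, $q=8$, $a$ odd it gives $1/2$, whereas the true count is $2$. Your per-class weight omits the factor $(b^3-b)/\mathrm{lcm}(q_1,b^3-b)$; the correct weight is $\frac{\varphi(b)}{b}\frac{b^3-b}{\varphi(b^3-b)}\frac{b^L}{\mathrm{lcm}(q,b^3-b)}$. The two errors cancel, so your final expression is right, but it has not been derived.

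\textbf{The fallback splits the frequencies in the wrong place.} The correct dichotomy is $q\mid(b^2-1)b^Lh$ versus $q\nmid(b^2-1)b^Lh$, not whether $q/(q,b^3-b)$ divides $h$. For example, take $q=b^2$ and $b\nmid h$. Then $e(h\rev{n}/q)$ depends only on the top two digits of $n$, so it is constant on intervals of length $b^{L-2}$. No type~I/II or digital Fourier estimate gives cancellation there, and Lemma~\ref{dartlem} does not apply since $q\mid b^Lh$. Such frequencies must be evaluated with the prime number theorem on those intervals. They are negligible only because the resulting Ramanujan sum $c_{b^2}(h)$ over units modulo $b^2$ vanishes.

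Conversely, the frequencies with $q\mid(b^2-1)b^Lh$ do not reduce to $p$ modulo divisors of $b^3-b$. They reduce to $p\bmod(q,b^2-1)$ together with the leading digits of $p$, which is again the interval count above. To repair either route you need precisely this main-term computation. The minor-frequency bound can be quoted as you intend: for $\rev{\vartheta}^*_L(a,q)$ itself, Lemma~\ref{dartlem} suffices after splitting by the leading digit of $p$.
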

Theorem~\ref{thm:modifiedZsiflaw} is a generalisation of~\cite[Lemma~2.1]{chourasiya2025power}, which only considered $a=0$ and was used to prove an asymptotic for the number of square-free reversed primes. 

Compared to the usual Siegel--Walfisz theorem for primes, Theorem~\ref{thm:modifiedZsiflaw} is effective, has a stronger error term, and holds for a wider range of $q$. The main drawback though, is that the definition of $\rev{\vartheta}_L^*(a,q)$ requires one to count all reversed primes of length $L$. This causes problems in applications where one often requires finer detail about the distribution of reversed primes. As mentioned in~\cite[p.~2]{bhowmik2025zsiflaw}, one needs to fix the digit length so as to avoid the ``truly wild" behaviour that would occur if one instead counted over $p\leq x$. However here, we explore a different perspective. Namely, we find that if one counts over $\rev{p}\leq x$, then a very neat distribution is obtained of a similar form to~\eqref{Dartygenew}. In particular, we consider
\begin{equation}\label{thetastardef2}
    \rev{\vartheta}^*(x;a,q):=\sum_{\substack{\rev{p}\leq x\\\rev{p}\equiv a\thinspace(\text{mod}\ q)\\ (\rev{p},b^3-b)=1}}\log p
\end{equation}
and define
\begin{equation*}
    \mathfrak{B}(x):=\{n\leq x:(\rev{n},b)=1\}
\end{equation*}
for the set of integers $n\leq x$ whose leading digit is coprime to the choice of base $b$. Notably,
\begin{equation*}
    \#\fB(b^L)-\#\fB(b^{L-1})=\frac{\varphi(b)}{b}b^L,   
\end{equation*}
which is a factor in~\eqref{Dartygenew}. Our new asymptotic result, expressed in terms of $\#\fB(x)$, is then as follows.
\begin{theorem}[Refined Zsiflaw--Legeis theorem]\label{thm: smooth zsiflaw legeis}
    Let $b,a,q\in\Z$ with $b\geq 2$ and $q\geq 1$. Then, for all $A>0$
    \begin{equation}\label{smoothasym}
        \rev{\vartheta}^*(x;a,q)=\frac{(q,b^3-b)}{\varphi((q,b^3-b))}\frac{\rho_b(a,q)}{q} \#\fB(x) + O_{b,A}\bigg(\frac{x}{(\log  x)^A}\bigg)
    \end{equation}
    provided
    \begin{equation}\label{qrangeeq}
        q\leq\exp(c\sqrt{L}),
    \end{equation}
    where $c>0$ is an ineffective constant depending on $b$, and $\rho_b(a,q)$ is as in~\eqref{rhobdef}.
\end{theorem}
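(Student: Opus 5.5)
Throughout, $L$ denotes the number of base-$b$ digits of $x$, so $b^{L-1}\le x<b^L$. The plan is to split $\rev{\vartheta}^*(x;a,q)$ into contributions from complete digit lengths and one incomplete length. Complete lengths are handled by Theorem~\ref{thm:modifiedZsiflaw}. The incomplete length is handled by fixing a block of leading digits of $\rev{p}$, which amounts to fixing trailing digits of $p$. Concretely, write
\begin{equation*}
    \rev{\vartheta}^*(x;a,q)=\sum_{\ell<L}\rev{\vartheta}^*_\ell(a,q)+\sum_{\substack{\rev{p}\in\cB_L^*,\ \rev{p}\le x\\ \rev{p}\equiv a\ (\mathrm{mod}\ q)}}\log p .
\end{equation*}

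For the complete lengths, the terms with $\ell\le L/2$ contribute $O(b^{L/2})$ trivially. For $L/2<\ell<L$, the hypothesis $q\le\exp(c\sqrt{L})$ gives $q\le\exp(c'\sqrt{\ell})$ after shrinking the constant, so Theorem~\ref{thm:modifiedZsiflaw} applies. Summing its main terms, and using $\#\fB(b^\ell)-\#\fB(b^{\ell-1})=\frac{\varphi(b)}{b}b^\ell$, telescopes to
\begin{equation*}
    \frac{(q,b^3-b)}{\varphi((q,b^3-b))}\frac{\rho_b(a,q)}{q}\,\#\fB(b^{L-1})+O\big(b^L e^{-c\sqrt{L}/2}\big).
\end{equation*}

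For the incomplete length, fix $k$ with $1\le k\le L$ (chosen below) and decompose $[b^{L-1},x]$ into blocks of the form $[mb^{L-k},(m+1)b^{L-k})$, where $m$ ranges over the $k$-digit integers with $(m+1)b^{L-k}\le x$. This leaves one boundary block, whose contribution is $O(b^{L-k}\cdot L)$; taking $k\approx A\log L/\log b$ makes it $O(x/(\log x)^A)$.

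Within a block, $\rev{p}$ has leading digits $m$, so $p\equiv \rev{m}\pmod{b^{k}}$ up to the leading-digit convention. The count of such $p$ of length $L$ with $\rev{p}\equiv a\pmod q$ is a Zsiflaw--Legeis-type count in which the lowest $k$ digits of $p$ are fixed. The main step is therefore to extend Theorem~\ref{thm:modifiedZsiflaw} to primes $p$ in a residue class modulo $b^k$, since fixing the last $k$ digits of $p$ is exactly fixing $\rev{m}$. I expect to rerun the argument behind Theorem~\ref{thm:modifiedZsiflaw}, i.e.\ the exponential-sum/Fourier approach of \cite{dartyge2025prime,bhowmik2025zsiflaw}: detect $\rev{p}\equiv a\pmod q$ via additive characters and use the digit-product structure of $\sum_{n\in\cB_L}e(h\rev{n}/q)$. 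The extra condition $p\equiv r\pmod{b^k}$ is then a congruence on $p$ to modulus $b^k q'$ with $b^k=(\log x)^{O(A)}$, still within Siegel--Walfisz range. This is where ineffectivity enters: the error becomes $O_A(b^{L-k}(\log x)^{-2A})$ per block, and the constant $c$ in~\eqref{qrangeeq} becomes ineffective.

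The main term in each block should be
\begin{equation*}
    \frac{(q,b^3-b)}{\varphi((q,b^3-b))}\frac{\rho_b(a,q)}{q}\,b^{L-k}
\end{equation*}
when $(m,b)=1$ at the leading digit, and $0$ otherwise. This holds because the coprimality to $b$ of the last digit of $p$ is automatic for large primes, while the leading digit of $\rev{p}$ must itself be coprime to $b$. Summing over the $k$-digit blocks counts exactly $\#\fB(x)-\#\fB(b^{L-1})$ up to the boundary error. Combining this with the complete-length sum gives~\eqref{smoothasym}, the errors totalling $b^k\cdot b^{L-k}(\log x)^{-2A}+x(\log x)^{-A}$.

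The main obstacle is the uniformity of the block-level asymptotic. The local densities modulo $(q,b^3-b)$ must not interact with the fixed residue class modulo $b^k$, in particular through the residue of $\rev{p}$ modulo $b-1$ and $b+1$. This requires checking that fixing the digits of $m$ does not bias the distribution modulo $b^3-b$ beyond the factor already accounted for.
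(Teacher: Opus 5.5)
\textbf{There is a genuine gap: the exponential-sum bound for the non-principal frequencies under a congruence on $p$ is missing.}

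Your architecture is the paper's own. You handle complete digit lengths via the fixed-length asymptotic. You cut the last length into blocks by the leading $k$ digits of $\rev{p}$ (equivalently $p \bmod b^k$), with $b^k$ a power of $\log x$ so the boundary block costs $O(Lb^{L-k})$. Your block-level asymptotic is exactly Proposition~\ref{prop: partitioned Zsiflaw Legeis}, and your assembly is Lemma~\ref{lemma: cumulative zsiflaw legeis} plus the final step. But that block-level asymptotic is the whole content, and your sketch covers only its easy half.

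After expanding $\rev{p}\equiv a\pmod q$ with $e(h\rev{p}/q)$, Siegel--Walfisz handles only the frequencies with $q\mid (b^2-1)b^Lh$. It does so with modulus $b^k(q,b^2-1)$, together with interval conditions on the top $L_0\ll\sqrt{L}$ digits of $p$ coming from $(q,b^L)$. For every other $h$ you need
\[
\sum_{\substack{b^{L-1}\le n<t\\ n\equiv \rev{m}\ (\mathrm{mod}\ b^k)}}\Lambda(n)\,e\Big(\frac{h\rev{n}}{q}\Big)\ll_b b^L e^{-c\sqrt{L}} ,
\]
and nothing you invoke gives this. Lemma~\ref{dartlem} has no congruence restriction on $n$. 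The condition $n\equiv \rev{m}\pmod{b^k}$ is an interval condition on $\rev{n}$, not a congruence, so it cannot be folded into a larger modulus $q$. Saying you will ``rerun the argument'' does not explain why the saving survives the restriction.

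The missing idea is Lemma~\ref{newerrlem}. Detect $n\equiv f\pmod d$ with characters $e(jn/d)$, so the phase becomes $\frac{h}{q}\trev_L(n)+\frac{j}{d}n$. This is weakly digital with seed $\alpha_i(m)=\frac{h}{q}mb^{L-i-1}+\frac{j}{d}mb^{i}$. In Bhowmik--Suzuki's bound (Lemma~\ref{kappalem}) only the quantities $b\alpha_i(m)-\alpha_{i+1}(m)=\frac{h}{q}mb^{L-i-2}(b^2-1)$ enter. The linear part $\frac{j}{d}mb^i$ cancels identically, so the same saving $\exp(-c\sqrt{L})$ holds uniformly in $j,d,f$.

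The issue you flag as the main obstacle is in fact routine. Since $\rev{p}\equiv b^{L-1}p\pmod{b^2-1}$ and $(b^k,b^2-1)=1$, the Chinese remainder theorem separates those conditions. The $(q,b^L)$-part constrains only the leading $L_0$ digits of $p$, which are disjoint from the trailing $k$.

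Your justification of the block main term also conflates two conditions: the leading digit of $\rev{p}$ is the last digit of $p$. The genuinely separate condition $(\rev{p},b)=1$ restricts the leading digit of $p$. Its factor $\varphi(b)/b$ cancels against $\varphi(b^k)=b^{k-1}\varphi(b)$ to give $b^{L-k}$.
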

Theorem~\ref{thm: smooth zsiflaw legeis} turns out to be very useful in additive applications. In the context of Theorems~\ref{tern1thm:intro}--\ref{sqfreethm:intro}, the asymptotic~\eqref{smoothasym} allows us to accurately count the number of reversed primes $\rev{p}\leq x$ with $x\leq N$. This is very important in dealing with the major arcs in the standard form of the circle method.

Compared to Theorem~\ref{thm:modifiedZsiflaw}, our proof of Theorem~\ref{thm: smooth zsiflaw legeis} actually uses the classical Siegel--Walfisz theorem for primes, leading to a weaker and ineffective error term in~\eqref{smoothasym}. The range of $q$ in~\eqref{qrangeeq} however, is the same as in Theorem~\ref{thm:modifiedZsiflaw}.

We conclude this subsection by remarking that the error term in~\eqref{smoothasym} is likely much stronger in reality. We conjecture that in fact an $O(x^{1/2+\varepsilon})$ error term should be possible, in-line with the usual (generalised) Riemann hypothesis for primes in arithmetic progressions\footnote{For a proof of this Riemann hypothesis for reversed primes, or ``Nnameir" hypothesis, the authors of this paper are willing to offer $\$\rev{1000000}$ (AUD).}. 

\subsection{Asymptotics for Theorems~\ref{tern1thm:intro},~\ref{tern2thm:intro} and~\ref{sqfreethm:intro} and Conjecture~\ref{hcabcon}}\label{Appsect}
We now state, and later prove, asymptotic versions of Theorems~\ref{tern1thm:intro},~\ref{tern2thm:intro} and \ref{sqfreethm:intro}, and a conjectural asymptotic for Hcabdlog's conjecture (Conjecture~\ref{hcabcon}). Each of these results are applications of our refined Zsiflaw--Legeis theorem (Theorem~\ref{thm: smooth zsiflaw legeis}).

We begin with asymptotics for our ternary results, Theorems~\ref{tern1thm:intro} and~\ref{tern2thm:intro}.
\begin{theorem}\label{ternaryhcabthm1}
    Let $b\geq 2$ be fixed. Then every sufficiently large odd integer $N$ can be expressed as the sum of a prime and two reversed primes. In particular, if
    \begin{equation*}
        \mathcal{R}_{1,2}(N):=\sum_{\substack{p_1,\rev{p_2},\rev{p_3}\leq N\\ N=p_1+\rev{p_2}+\rev{p_3}\\ (\rev{p_2}\rev{p_3},b^3-b)=1}}\log p_1\log p_2 \log p_3,
    \end{equation*}
    then, for any $A>0$,
    \begin{equation}\label{r12asym}
        \mathcal{R}_{1,2}(N)=\mathfrak{S}_3(N)\cS_{1,2}(N)+O_{b,A}\left(\frac{N^{2}}{(\log N)^A}\right),
    \end{equation}
    where
    \begin{equation}\label{sing1eq}
        \mathfrak{S}_3(N):=\prod_{\substack{p\mid b^3-b\\ p\mid N}}\left(1-\frac{1}{(p-1)^2}\right)\prod_{\substack{p\mid b^3-b\\ p\nmid N}}\left(1+\frac{1}{(p-1)^3}\right).
    \end{equation}
    and
    \begin{equation}\label{S12def}
       \cS_{1,2}(N):=\sum_{\substack{n_1,n_2,n_3\leq N\\n_1+n_2+n_3=N\\(\rev{n_2}\rev{n_3},b)=1}}1. 
    \end{equation}
\end{theorem}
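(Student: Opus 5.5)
We will prove Theorem~\ref{ternaryhcabthm1} by the Hardy--Littlewood circle method. Write
\begin{equation*}
    S(\alpha):=\sum_{p\leq N}\log p\, e(p\alpha),\qquad \rev{S}(\alpha):=\sum_{\substack{\rev{p}\leq N\\ (\rev{p},b^3-b)=1}}\log p\, e(\rev{p}\alpha),
\end{equation*}
so that $\mathcal{R}_{1,2}(N)=\int_0^1 S(\alpha)\rev{S}(\alpha)^2e(-N\alpha)\,d\alpha$. Fix $B=B(A)$ large, put $Q=(\log N)^B$, and let the major arcs $\mathfrak{M}$ be the intervals $|\alpha-a/q|\leq Q/N$ with $q\leq Q$, $(a,q)=1$. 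The minor arcs $\mathfrak{m}$ are the complement.

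\textbf{Minor arcs.} Here we use only information about the genuine prime sum. By Vinogradov's estimate we have $\sup_{\alpha\in\mathfrak{m}}|S(\alpha)|\ll N(\log N)^{4-B/2}$. Parseval gives $\int_0^1|\rev{S}(\alpha)|^2d\alpha\ll N\log N$, since the map $p\mapsto\rev{p}$ is at most boundedly-to-one (up to trailing zeros, which are excluded by the coprimality condition). Combining these, the minor arc contribution is $\ll N^2(\log N)^{5-B/2}$, which is acceptable once $B$ is chosen large. This is why only one genuine prime is needed and why no exponential sum estimate for reversed primes is required.

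\textbf{Major arcs.} Take $\alpha=a/q+\beta$. Split $\rev{S}$ according to residue classes $\rev{p}\equiv r \pmod q$ and apply partial summation in $\beta$ to Theorem~\ref{thm: smooth zsiflaw legeis}. Since $q\leq(\log N)^B\leq\exp(c\sqrt{L})$ for $L\asymp\log N$, this gives
\begin{equation*}
    \rev{S}(a/q+\beta)=\sum_{r\bmod q}e(ar/q)\frac{(q,b^3-b)}{\varphi((q,b^3-b))}\frac{\rho_b(r,q)}{q}\,T(\beta)+O\!\left(\frac{N(1+|\beta|N)}{(\log N)^{A'}}\right),
\end{equation*}
where $T(\beta)=\sum_{n\leq N,\,(\rev{n},b)=1}e(n\beta)$. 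The key point is that $\#\fB(x)$ is exactly the counting function of this set, so Stieltjes integration against $d\#\fB$ reproduces $T$. Similarly, Siegel--Walfisz gives $S(a/q+\beta)=\frac{\mu(q)}{\varphi(q)}U(\beta)+\text{error}$ with $U(\beta)=\sum_{n\leq N}e(n\beta)$.

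Next evaluate the Ramanujan-type sum $c^*_q(a):=\frac{(q,b^3-b)}{\varphi((q,b^3-b))q}\sum_{r}\rho_b(r,q)e(ar/q)$. The condition $(r,q,b^3-b)=1$ depends only on $r$ modulo $d=(q,b^3-b)$, so the sum vanishes unless $q\mid d$, i.e.\ $q\mid b^3-b$. In that case it equals $\mu(q)/\varphi(q)$ for squarefree $q$; this is where the singular series collapses to a finite product over $p\mid b^3-b$. The main term is then
\begin{equation*}
    \sum_{q\mid b^3-b}\frac{\mu(q)^3}{\varphi(q)^3}c_q(-N)\int_{|\beta|\leq Q/N}U(\beta)T(\beta)^2e(-N\beta)\,d\beta.
\end{equation*}
Completing the $\beta$-integral to $[0,1]$ yields $\cS_{1,2}(N)$, with tail error $\ll N^2/Q$ coming from $|U|\ll\|\beta\|^{-1}$ and $\int|T|^2\leq N$. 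Completing the $q$-sum is trivial because it is finite. The Euler product over $p\mid b^3-b$ of $1-\mu(p)c_p(-N)/\varphi(p)^3$ gives exactly $\mathfrak{S}_3(N)$ as in \eqref{sing1eq}.

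\textbf{Main obstacle.} The delicate step is the major arc approximation of $\rev{S}$: one must keep the error uniform over $|\beta|\leq Q/N$ and control the cross terms when the approximations are multiplied, using $L^2$ bounds for the approximants on the arcs. Since the error in Theorem~\ref{thm: smooth zsiflaw legeis} is $O(x/(\log x)^A)$ with arbitrary $A$, all these losses are powers of $\log N$, and choosing $A'$ large gives \eqref{r12asym}.

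Finally, the qualitative statement follows from a lower bound on the main term. For odd $N$, each factor in $\mathfrak{S}_3(N)$ is positive except $p=2$, where $2\mid N$ fails, so we use the factor $1+1=2$; also $p=3$ does not give $1-\tfrac14\cdot 0$. Moreover $\cS_{1,2}(N)\gg_b N^2$, since a positive proportion of integers below $N/3$ have leading digit coprime to $b$. Hence $\mathcal{R}_{1,2}(N)\gg N^2$, which gives the existence of a representation.
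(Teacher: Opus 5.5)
Your proposal is correct and follows essentially the same route as the paper: Vinogradov's bound plus Parseval for $\rev{S}$ on the minor arcs, and on the major arcs Theorem~\ref{thm: smooth zsiflaw legeis}, partial summation, and the vanishing of the twisted Ramanujan sum unless $q\mid b^3-b$. Your periodicity argument modulo $(q,b^3-b)$ and your Parseval bound $\int_0^1|T|^2\le N$ for the $\beta$-tail are slightly slicker substitutes for Lemma~\ref{lemma: exponential sum} and Lemma~\ref{Weylrevlem}.

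There is one sign slip: the Euler factor is $1+\mu(p)^3c_p(-N)/\varphi(p)^3=1-c_p(N)/(p-1)^3$. This gives $1-1/(p-1)^2$ for $p\mid N$ and $1+1/(p-1)^3$ for $p\nmid N$, so for odd $N$ the $p=2$ factor equals $2$ and every other factor is positive.
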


\begin{theorem}\label{ternaryhcabthm2}
    Let $b\geq 2$ be fixed. Then every sufficiently large odd integer $N$ can be expressed as the sum of two primes and a reversed prime. In particular, if
    \begin{equation*}
        \mathcal{R}_{2,1}(N):=\sum_{\substack{p_1,p_2,\rev{p_3}\leq N\\ N=p_1+p_2+\rev{p_3}\\ (\rev{p_3},b^3-b)=1}}\log p_1\log p_2 \log p_3,
    \end{equation*}
    then, for any $A>0$,
    \begin{equation}\label{r21asym}
        \mathcal{R}_{2,1}(N)=\mathfrak{S}_3(N)\cS_{2,1}(N)+O_{b,A}\left(\frac{N^2}{(\log N)^A}\right),
    \end{equation}
    where $\mathfrak{S}_3(N)$ is as in~\eqref{sing1eq}, and
    \begin{equation}\label{S21def}
       \cS_{2,1}(N):=\sum_{\substack{n_1,n_2,n_3\leq N\\n_1+n_2+n_3=N\\(\rev{n_3},b)=1}}1.  
    \end{equation}
\end{theorem}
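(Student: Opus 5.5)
We prove Theorem~\ref{ternaryhcabthm2} by the Hardy--Littlewood circle method, with Theorem~\ref{thm: smooth zsiflaw legeis} supplying the major arc input for the reversed prime. Write $e(\alpha)=e^{2\pi i\alpha}$ and set
\begin{equation*}
S(\alpha):=\sum_{p\leq N}\log p\, e(p\alpha),\qquad \rev{S}(\alpha):=\sum_{\substack{\rev{p}\leq N\\ (\rev{p},b^3-b)=1}}\log p\, e(\rev{p}\alpha),
\end{equation*}
so that $\cR_{2,1}(N)=\int_0^1 S(\alpha)^2\rev{S}(\alpha)e(-N\alpha)\,d\alpha$. Fix $B$ large, put $P=(\log N)^B$, and take major arcs $\fM$ to be the intervals $|\alpha-a/q|\leq P/N$ with $q\leq P$ and $(a,q)=1$; the minor arcs $\fm$ are the complement.

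\textbf{Minor arcs.} We use Vinogradov's bound $\sup_{\alpha\in\fm}|S(\alpha)|\ll N(\log N)^{-B/2+4}$ and Parseval with Cauchy--Schwarz. Since the map $p\mapsto\rev{p}$ is at most $O(\log N)$-to-one (the digit length is not fixed), we have $\int_0^1|\rev{S}|^2\ll N\log^2 N$. Together with $\int_0^1|S|^2\ll N\log N$ this gives
\begin{equation*}
\int_{\fm}|S^2\rev{S}|\leq \sup_{\fm}|S|\Big(\int|S|^2\Big)^{1/2}\Big(\int|\rev{S}|^2\Big)^{1/2}\ll N^2(\log N)^{-A}
\end{equation*}
once $B$ is large. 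No exponential sum estimate for reversed primes is needed here, which is exactly why this theorem is easier than Theorem~\ref{ternaryhcabthm1}.

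\textbf{Major arcs.} For $\alpha=a/q+\beta$ we split $S$ and $\rev{S}$ into residue classes mod $q$ and apply partial summation in $\beta$, which requires asymptotics uniform in $x\leq N$.
\begin{itemize}
\item For $S$, Siegel--Walfisz gives $S(a/q+\beta)=\frac{\mu(q)}{\varphi(q)}\sum_{n\leq N}e(n\beta)+O(N(\log N)^{-C})$.
\item For $\rev{S}$, Theorem~\ref{thm: smooth zsiflaw legeis} applies since $q\leq P\leq\exp(c\sqrt{L})$. It gives
\begin{equation*}
\rev{S}(a/q+\beta)=\Big(\sum_{r\bmod q}\frac{(q,b^3-b)\rho_b(r,q)}{q\varphi((q,b^3-b))}e(ar/q)\Big)\sum_{n\in\fB(N)}e(n\beta)+O(N(\log N)^{-C}),
\end{equation*}
where partial summation against $\#\fB(x)$ produces the sum over $n\in\fB(N)$.
\end{itemize}
Call the bracketed coefficient $\rev{c}(a,q)$. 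The key point is that the smoothness of $\#\fB(x)$ in $x$ makes the main term factor as an arithmetic coefficient times a ``digit'' generating function $F_{\fB}(\beta)$. The errors on $\fM$ are controlled by $|\fM|\ll P^3/N$ times $N^3(\log N)^{-C}$.

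Next we extend the $\beta$-integral to all of $[0,1]$ and the $q$-sum to infinity. The $\beta$-integral then yields exactly $\cS_{2,1}(N)$, and the tails are bounded by $\int_{|\beta|>P/N}|F|^3\ll N^2/P$ together with the decay of the singular series terms.

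\textbf{Singular series.} This is the main obstacle. We must evaluate
\begin{equation*}
\sum_q\frac{\mu(q)^2}{\varphi(q)^2}\sum_{(a,q)=1}\rev{c}(a,q)e(-aN/q).
\end{equation*}
Compute $\rev{c}(a,q)$ by splitting $q=q_1q_2$ with $q_1\mid(b^3-b)^\infty$ and $(q_2,b^3-b)=1$.
\begin{itemize}
\item For $q_2$ the coefficient is $1$ if $q_2=1$ and $0$ otherwise, because the sum $\sum_r e(ar/q_2)$ vanishes.
\item For squarefree $q_1=p\mid b^3-b$ it equals $\frac{p}{p-1}\cdot\frac1p\sum_{r\not\equiv0}e(ar/p)=-\frac{1}{p-1}$.
\end{itemize}
Only squarefree $q\mid b^3-b$ therefore contribute, and the Euler factor at $p$ is
\begin{equation*}
1+\frac{-1}{(p-1)^2}\,c_p(N)\cdot\frac{1}{p-1},
\end{equation*}
where the Ramanujan sum $c_p(N)$ equals $p-1$ if $p\mid N$ and $-1$ otherwise. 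This gives $1-\frac{1}{(p-1)^2}$ or $1+\frac{1}{(p-1)^3}$, matching $\mathfrak{S}_3(N)$ in~\eqref{sing1eq}. The series is a finite product, so no convergence issue arises.

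Finally, $\cS_{2,1}(N)\gg N^2$ because $\fB$ has positive density, and for odd $N$ we have $\mathfrak{S}_3(N)>0$ (the factor with $p=2$ has $p\nmid N$). This gives the representation claim.
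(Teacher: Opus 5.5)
Your proposal is correct and follows essentially the same route as the paper: Siegel--Walfisz and the refined Zsiflaw--Legeis theorem (with partial summation against $\#\fB(x)$) on the major arcs, Vinogradov's bound with Cauchy--Schwarz and the two $L^2$ bounds on the minor arcs, and the same finite singular series $\fS_3(N)$. Your bound $\int_0^1|\rev{S}(\alpha)|^2\,\mathrm{d}\alpha\ll N(\log N)^2$ is weaker than it needs to be but still suffices; primes $p\neq b$ never end in the digit $0$, so $p\mapsto\rev{p}$ is injective on them, which gives the paper's $N\log N$.
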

In Theorems~\ref{ternaryhcabthm1} and~\ref{ternaryhcabthm2}, it is useful to note that (see Lemma~\ref{S12S21lem})
\begin{equation}\label{Sasym}
    \cS_{1,2}(N),\:\cS_{2,1}(N)\asymp_b N^2
\end{equation}
and for odd $N$,
\begin{equation*}
    \fS_3(N)>\prod_{p>2}\left(1-\frac{1}{(p-1)^2}\right)=C_2>0,
\end{equation*}
where $C_2=0.66016\ldots$ is the twin prime constant. This ensures that~\eqref{r12asym} and~\eqref{r21asym} are asymptotic expresions.

Theorems~\ref{ternaryhcabthm1} and~\ref{ternaryhcabthm2} are proven using Theorem~\ref{thm: smooth zsiflaw legeis} and the Hardy--Littlewood circle method. Namely, by modifying the classical argument for Vinogradov's three prime theorem. In general, the asymptotics~\eqref{r12asym} and ~\ref{r21asym} readily generalise to sums of $k_1\geq 1$ primes and $k_2\geq 0$ reversed primes, provided $k_1+k_2\geq 3$. By only considering the major arcs in the circle method, we also get the following asymptotic form of Conjecture~\ref{hcabcon}. 
\begin{conjecture}[Asymptotic Hcabdlog's conjecture]\label{asymhcabcon}
    Let $b\geq 2$ be fixed. Then every sufficiently large even integer $N$ can be expressed as the sum of a prime and a reversed prime. In particular, if
    \begin{equation}\label{R11def}
        \mathcal{R}_{1,1}(N):=\sum_{\substack{p_1,\rev{p_2}\leq N\\ N=p_1+\rev{p_2}\\ (\rev{p_2},b^3-b)=1}}\log p_1\log p_2,
    \end{equation}
    then
    \begin{equation*}
        \mathcal{R}_{1,1}(N)\sim\mathfrak{S}_2(N)\#\fB(N),
    \end{equation*}
    where
    \begin{equation}\label{sing2eq}
        \fS_2(N):=\prod_{\substack{p\mid b^3-b\\ p\mid N}}\left(1+\frac{1}{p-1}\right)\prod_{\substack{p\mid b^3-b\\ p\nmid N}}\left(1-\frac{1}{(p-1)^2}\right).
    \end{equation}
\end{conjecture}

Finally, we state the asymptotic version of Theorem~\ref{sqfreethm:intro} on the sum of a reversed prime and a square-free number. In what follows, $\mu^2(\cdot)$ is the usual square-free indicator function.
\begin{theorem}\label{sqfreethm}
    Fix $b\geq 2$. Then all sufficiently large integers $N$ can be expressed as the sum of a reversed prime and a square-free number. More precisely, if
    \begin{equation*}
        \mathcal{R}_{\square}(N):=\sum_{\substack{1\leq \rev{p}\leq N\\ \mu^2(N-\rev{p})=1\\(\rev{p},b^3-b)=1}}\log p,
    \end{equation*}
    then, for any $A>0$,
    \begin{equation}\label{squarerep}
        \cR_{\square}(N)=\frac{\#\mathfrak{B}(N)}{\zeta(2)}\fS_{\square}(N)+O_{b,A}\left(\frac{N}{(\log N)^A}\right),
    \end{equation}
    where $\zeta(\cdot)$ is the Riemann-zeta function, and
    \begin{equation*}
        \fS_{\square}(N)=\prod_{\substack{p\mid b^3-b}}\left(1+\frac{1}{p^2-1}\right)\prod_{\substack{p\mid b^3-b\\ p\nmid N}}\left(1-\frac{1}{p^2-p}\right).
    \end{equation*}
\end{theorem}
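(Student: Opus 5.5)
We follow the standard argument for ``prime plus square-free'': expand $\mu^2(n)=\sum_{d^2\mid n}\mu(d)$ and use Theorem~\ref{thm: smooth zsiflaw legeis} for the small moduli. Write $m=N-\rev{p}$. Then
\begin{equation*}
    \cR_\square(N)=\sum_{d\leq\sqrt{N}}\mu(d)\sum_{\substack{\rev{p}\leq N\\ \rev{p}\equiv N\ (\mathrm{mod}\ d^2)\\ (\rev{p},b^3-b)=1}}\log p=\sum_{d\leq \sqrt N}\mu(d)\,\rev{\vartheta}^*(N;N,d^2),
\end{equation*}
where we ignore the term $\rev{p}=N$, which contributes $O(\log N)$. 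We split at a parameter $D=(\log N)^{B}$ with $B=B(A)$ large. For $d\leq D$ we have $d^2\leq(\log N)^{2B}\leq \exp(c\sqrt{L})$ with $L\asymp\log N$, so Theorem~\ref{thm: smooth zsiflaw legeis} applies. It gives the main term
\begin{equation*}
    \#\fB(N)\sum_{d\leq D}\mu(d)\frac{(d^2,b^3-b)}{\varphi((d^2,b^3-b))}\frac{\rho_b(N,d^2)}{d^2},
\end{equation*}
with total error $O(D\,N(\log N)^{-A'})$. Taking $A'=A+B$ makes this acceptable.

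For the range $D<d\leq\sqrt N$ we need an upper bound. We drop primality and the weight $\log p\ll\log N$, and bound the number of $n\leq N$ with $n\equiv N \pmod{d^2}$ by $N/d^2+1$. This is too crude near $d\approx\sqrt N$, because $\sum_{d\le\sqrt N}1=\sqrt N$. That is fine here: the sum is at most
\begin{equation*}
    \log N\sum_{D<d\leq\sqrt N}\left(\frac{N}{d^2}+1\right)\ll \frac{N\log N}{D}+\sqrt N\log N,
\end{equation*}
since we are counting pairs $(m,\rev p)$ with $\rev p\le N$ and $m=N-\rev{p}$ determined. Indeed, the number of $\rev{p}\le N$ in a class mod $d^2$ is trivially at most $N/d^2+1$, because $\rev p$ determines $p$ (up to trailing zeros, which do not occur since $(\rev p,b)=1$ forces the leading digit of $p$ to be nonzero). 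With $B=A+1$ this is $O(N(\log N)^{-A})$. The same trivial bound lets us complete the main-term series to all $d$ with error $\#\fB(N)/D$.

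What remains is to evaluate the completed series as an Euler product. The summand is multiplicative in $d$ (squarefree $d$). For $p\nmid b^3-b$, the factor is $1-p^{-2}$. For $p\mid b^3-b$, we have $(p^2,b^3-b)\in\{p,p^2\}$, and in either case $\frac{(p^2,b^3-b)}{\varphi((p^2,b^3-b))}=\frac{p}{p-1}$. Also $\rho_b(N,p^2)=0$ if $p\mid N$ and $1$ otherwise. So the local factor is $1$ if $p\mid N$ and $1-\frac{1}{p(p-1)}$ if $p\nmid N$. Writing
\begin{equation*}
    \prod_{p\nmid b^3-b}(1-p^{-2})=\zeta(2)^{-1}\prod_{p\mid b^3-b}\left(1+\frac{1}{p^2-1}\right)
\end{equation*}
gives exactly $\fS_\square(N)/\zeta(2)$, which proves~\eqref{squarerep}.

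Finally, since $\#\fB(N)\gg_b N$ and $\fS_\square(N)\geq\prod_{p\mid b^3-b}\left(1-\frac{1}{p^2-p}\right)>0$, with the factor at $p=2$ equal to $1/2$, the main term dominates. Hence every large $N$ is represented.

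The main obstacle is only bookkeeping. We must check that the $q$-range in~\eqref{qrangeeq} comfortably covers $d^2\le(\log N)^{2B}$, with $L$ the digit length of $N$. We must also confirm that the ineffective constants and the $A$-dependence compose correctly.
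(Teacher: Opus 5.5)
Your proposal is correct and follows essentially the same route as the paper: expand $\mu^2(n)=\sum_{d^2\mid n}\mu(d)$, split the $d$-sum at a power of $\log N$, bound the large-$d$ range trivially by $\log N\sum_{d>D}(N/d^2+1)$, apply Theorem~\ref{thm: smooth zsiflaw legeis} for small $d$, then complete the series and evaluate the Euler product. Your bookkeeping is cleaner than the paper's, since you enlarge the exponent to $A'=A+B$ to absorb the $D$ summed error terms. One small quibble: $p\mapsto\rev{p}$ is injective on primes because no prime other than possibly $p=b$ has a trailing zero, not because of anything about the leading digit of $p$; this affects at most $O(1)$ terms and does not change the argument.
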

Notably, $\fS_{\square}(N)\neq 0$ for both odd and even $N$ and Theorem~\ref{sqfreethm} approximates Hcabdlog's conjecture since primes are square-free. Moreover, the proof of Theorem~\ref{sqfreethm} does not use the circle method and is simpler than that of Theorems~\ref{ternaryhcabthm1},~\ref{ternaryhcabthm2} and~\ref{almostthm:intro}. In fact, to obtain the weaker result $\cR_{\square}(N)>0$ one may use Theorem~\ref{thm:modifiedZsiflaw} rather than our refined Zsiflaw--Legeis theorem (Theorem~\ref{thm: smooth zsiflaw legeis}). Such an approach may be desirable if one is interested in an effective result.

\subsection{Outline of paper}\label{outlinesect}
An outline of the rest of the paper is as follows. In Section~\ref{precursorsect}, we expand on the techniques in~\cite{bhowmik2024telhcirid} and~\cite{bhowmik2025zsiflaw} to prove a precursor result (Proposition~\ref{prop: partitioned Zsiflaw Legeis}) which describes the distribution of reversed primes in an interval $(rb^{i},(r+1)b^i)$ for certain ranges of $r$ and $i$. We then use this result in Section~\ref{smoothsect} to prove our variants of the Zsiflaw--Legeis theorem (Theorems~\ref{thm:modifiedZsiflaw} and~\ref{thm: smooth zsiflaw legeis}). Next, in Section~\ref{ternsect} we prove Theorems~\ref{ternaryhcabthm1},~\ref{ternaryhcabthm2} and~\ref{almostthm:intro} using the circle method in conjunction with Theorem~\ref{thm: smooth zsiflaw legeis}. This is followed by the proof of Theorem~\ref{sqfreethm} in Section~\ref{sqfreesect}. We conclude in Section~\ref{sumsect} with further discussion and conjectures in relation to representing integers purely as the sum of reversed primes, as opposed to a mix of primes and reversed primes.

Throughout we employ Vinogradov's notation $f\ll g$ to mean $f=O(g)$, and often use the shorthand $a\equiv b\thinspace (q)$ to mean $a\equiv b\thinspace(\text{mod}\ q)$. We also write $\mu(d)$ for the M\"obius function, $||x||=\min_{n\in\mathbb{Z}}|x-n|$ for the distance between $x$ and the nearest integer, and $e(x):=\exp(2\pi ix)$ for complex exponentials. Any use of the variable $p$ or any subscript thereof (e.g.~$p_1,p_2,p_3$) is assumed to denote a prime.

\section{A precursor to Theorems~\ref{thm:modifiedZsiflaw} and~\ref{thm: smooth zsiflaw legeis}}\label{precursorsect}
Fix a base $b\geq 2$. In this section we study the distribution of the function
\begin{equation}\label{thetaetadef}
    \rev{\vartheta}^*_L(\eta,r,a,q):=\sum_{\substack{rb^{L-\eta}\leq \rev{p}<(r+1)b^{L-\eta}\\\rev{p}\equiv a(q)\\ (\rev{p},b^3-b)=1}}\log p
\end{equation}
for $L,\eta\in\mathbb{N}$ with $\eta\leq L$ and $b^{\eta-1}\leq r < b^\eta$. In particular~$\rev{\vartheta}^*_L(\eta,r,a,q)$ counts $L$-digit reversed primes in arithmetic progressions whose first $\eta$ digits are equal to $r$. Our main goal will be to obtain the following asymptotic, from which we deduce Theorems~\ref{thm:modifiedZsiflaw} and~\ref{thm: smooth zsiflaw legeis} in the following section.

\begin{proposition}\label{prop: partitioned Zsiflaw Legeis}
    Let $b,a,q\in\Z$ with $b\geq 2$ and $q\geq 1$. Moreover, let $L\in\N$ and $b^\eta\leq L^\ell$ for any fixed $\ell>0$. Then, for any $b^{\eta-1}\leq r<b^\eta$ we have
    \begin{equation}\label{revetaasym}
        \rev{\vartheta}_L^*(\eta,r,a,q)=\kappa_b(\rev{r})\rho_b(a,q)\frac{(q,b^3-b)}{\varphi((q,b^3-b))}\frac{b^{L-\eta}}{q} +O_{b,\ell}\bigg(b^L \exp(-c\sqrt{L})\bigg)
    \end{equation}
    provided
    \begin{equation*}
         q\leq\exp(c\sqrt{L}),
    \end{equation*}
    where $c>0$ is an ineffective constant depending on $b$, $\rho_b(a,q)$ is as in~\eqref{rhobdef}, and
    \begin{equation*}
        \kappa_b(\rev{r}):=\begin{cases}
            1 & \text{if } (\rev{r},b)=1,\\
            0 & \text{otherwise.}
        \end{cases}
    \end{equation*}
\end{proposition}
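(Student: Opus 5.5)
The plan is to convert the condition on the leading digits of $\rev{p}$ into a congruence condition on $p$, and then detect $\rev{p}\equiv a\ (q)$ by additive characters. Write $r'$ for the integer whose $\eta$-digit base-$b$ string (zeros allowed) is the reverse of the digits of $r$, so that $r'=\rev{r}$ up to padding. For $p\in\cB_L$, the first $\eta$ digits of $\rev{p}$ equal $r$ exactly when the last $\eta$ digits of $p$ equal those of $r'$, that is, when $p\equiv r'\ (\mathrm{mod}\ b^\eta)$. Since every prime $p>b$ is coprime to $b$, such primes exist in abundance only if $(r',b)=1$; this is where the factor $\kappa_b(\rev{r})$ comes from. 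If $\kappa_b(\rev{r})=0$, the sum contains $O_b(1)$ terms and the statement is trivial. Hence
\begin{equation*}
\rev{\vartheta}_L^*(\eta,r,a,q)=\frac1q\sum_{h\ (\mathrm{mod}\ q)}e\Big(-\frac{ha}{q}\Big)\sum_{\substack{p\in\cB_L,\ p\equiv r'\,(b^\eta)\\ (\rev{p},b^3-b)=1}}\log p\; e\Big(\frac{h\rev{p}}{q}\Big).
\end{equation*}
The coprimality condition $(\rev{p},b^3-b)=1$ is handled in the same way, by M\"obius inversion over $d\mid b^3-b$, which gives only $O_b(1)$ extra terms.

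Next, I will split the frequencies $h/q$ according to the reduced denominator $q'=q/(h,q)$. Take first the case $q'\mid (q,b^3-b)$. Since $b^2\equiv 1$ modulo each prime-power part of $b^3-b$ coprime to $b$, in the manner discussed in \cite{bhowmik2024telhcirid}, the residue of $\rev{p}$ modulo such $q'$ is determined by $p$ modulo $q'$ together with $L\bmod 2$. The corresponding inner sums are therefore sums of $\log p$ over primes in residue classes modulo $\operatorname{lcm}(b^\eta,q')\ll_b L^\ell$. For these I apply the Siegel--Walfisz theorem in the form
\begin{equation*}
\vartheta(x;m,s)=\frac{x}{\varphi(s)}+O\big(x\exp(-c\sqrt{\log x})\big),\qquad s\le(\log x)^{\ell'} .
\end{equation*}
With $x\asymp b^L$, this gives error $O_{b,\ell}(b^L\exp(-c\sqrt L))$ and an ineffective $c$. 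Summing the resulting local densities over these $h$ and over $d$ should reproduce exactly $\rho_b(a,q)\frac{(q,b^3-b)}{\varphi((q,b^3-b))}\frac{b^{L-\eta}}{q}$. This is a finite computation: for each prime $p\mid b^3-b$, one counts residues of $\rev{n}$ that are coprime to $p$ and compatible with $a$.

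All remaining $h$ have $q'\nmid b^3-b$. For these I will use the exponential-sum estimate for reversed primes underlying the Zsiflaw--Legeis theorem of \cite{bhowmik2025zsiflaw} (the Mauduit--Rivat-type digit Fourier method). This estimate says that $\sum_{p\in\cB_L}\log p\, e(h\rev{p}/q)$ is $\ll b^L\exp(-c\sqrt L)$ uniformly for $q\le\exp(c\sqrt L)$ when the reduced denominator does not divide $b^3-b$. To insert the extra condition $p\equiv r'\ (b^\eta)$, I will expand it as
\begin{equation*}
\mathbb{1}_{p\equiv r'\,(b^\eta)}=b^{-\eta}\sum_{k\ (\mathrm{mod}\ b^\eta)}e\big(k(p-r')/b^\eta\big).
\end{equation*}
This turns each term into a bilinear-phase sum $\sum_p \log p\, e(kp/b^\eta+h\rev{p}/q)$. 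The type I/II analysis in those works handles a general additive twist $e(\alpha p)$ uniformly in $\alpha$, and the loss from the $b^\eta\le L^\ell$ values of $k$ is absorbed into $\exp(-c\sqrt L)$ after shrinking $c$. Summing over the $q$ values of $h$ with the prefactor $1/q$ costs nothing further.

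The main obstacle is this last step. One must check that the minor-arc bound from \cite{bhowmik2025zsiflaw} really holds uniformly with an extra linear phase $e(kp/b^\eta)$ and uniformly in $q$ up to $\exp(c\sqrt L)$. In particular, one needs the case where $q'$ shares factors with $b$, or with $b^2-1$ beyond $(q,b^3-b)$, to still give cancellation, rather than degenerating into a new main term. If the twisted form is not directly available, I would first try to redo their Type II estimate with the digit-restricted Fourier transform of the last $\eta$ digits fixed. Fixing those digits only multiplies the relevant $L^1$ digit-Fourier norms by a factor $b^{O(\eta)}=L^{O(\ell)}$, which is harmless.
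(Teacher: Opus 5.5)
\textbf{The gap: a class of frequencies falls into neither of your two cases.} You put into the ``minor'' class every $h$ whose reduced denominator $q'$ does not divide $b^3-b$, and assert that the exponential-sum estimate of \cite{dartyge2025prime,bhowmik2025zsiflaw} makes all of them small. That estimate actually requires $q\nmid(b^2-1)b^Lh$ (Lemma~\ref{dartlem}), i.e.\ $q'\nmid(b^2-1)b^L$. So the frequencies with $q'\mid(b^2-1)b^L$ but $q'\nmid b^3-b$ are covered by neither your main-term case nor your minor case. These are the $q'$ whose $b$-part does not divide $b$, for example $b=10$, $q'=4$.

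You flag this risk in your last paragraph, but your fallback of re-running the Type~II estimate cannot resolve it. For such $q'$, $e(h\rev{p}/q')$ depends only on $p$ modulo $(q',b^2-1)$ and on the top $L_0$ digits of $p$, where $L_0$ is minimal with $(q',b^{L_0})=(q',b^L)$. In the weakly digital framework every $\gamma_i$ with $i<\xi$ vanishes, because $q'\mid b^{L-i-2}(b^2-1)$ for large $L$. Hence Lemma~\ref{kappalem} gives no saving at all, with or without your twist, and there is no digit-level oscillation for a Type~II argument to exploit.

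These frequencies do contribute negligibly, but for a different reason: the second through $L_0$-th leading digits of primes are equidistributed. That is a Siegel--Walfisz statement on intervals of length $b^{L-L_0}$, not an exponential-sum bound. The paper therefore splits at $q\mid(b^2-1)b^Lh$ rather than at $q'\mid b^3-b$. The main term then carries the condition $\rev{p}\equiv a \pmod{(q,(b^2-1)b^L)}$, which is handled as follows:
\begin{itemize}
\item the $(q,b^2-1)$-part becomes the congruence $p\equiv \overline{b}^{L-1}a$;
\item the $(q,b^L)$-part becomes $p\in[v^*,v^*+b^{L-L_0})$ for admissible $v<b^{L_0}$;
\item Siegel--Walfisz is applied on each interval modulo $b^\eta (q,b^2-1)\ll_b L^{\ell}$, and $L_0\le c\sqrt{L}/\log 2$ keeps the loss of $b^{L_0}$ affordable;
\item the count $\varphi(b^{L_0})/\varphi((q,b^L))$ of admissible $v$ is what collapses everything to the factor $(q,b^3-b)/\varphi((q,b^3-b))$.
\end{itemize}

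\textbf{A related slip in your main-term step.} When $(q',b)>1$, the residue of $\rev{p}$ modulo $q'$ is \emph{not} determined by $p \bmod q'$ and $L\bmod 2$. Its component modulo $(q',b)$ is the leading digit of $p$; for $b=10$, $q'=2$ it is that digit's parity. Those inner sums are therefore over primes in residue classes intersected with intervals $[jb^{L-1},(j+1)b^{L-1})$. Siegel--Walfisz still handles this, but your ``finite computation'' of local densities has to be set up that way.

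\textbf{The step you call the main obstacle is fine.} Take $\alpha_i(m)=\frac{h}{q}mb^{L-i-1}+\frac{k}{d}mb^{i}$. The Bhowmik--Suzuki saving depends only on $b\alpha_i(m)-\alpha_{i+1}(m)=\frac{h}{q}mb^{L-i-2}(b^2-1)$, in which the linear twist cancels identically. So the bound is uniform in $k/d$, and with your normalisation $b^{-\eta}\sum_k$ the $k$-sum costs nothing; this is the paper's Lemma~\ref{newerrlem}.

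Your M\"obius treatment of $(\rev{p},b^3-b)=1$ is also acceptable. The paper instead discards the $O_b(1)$ primes with $(\rev{p},b^2-1)>1$ via Lemma~\ref{b21lem}, and encodes $(\rev{p},b)=1$ as a condition on the leading digit of $p$.
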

\begin{remark}
    The ineffectivity of the constant $c$ is due to the application of the Siegel--Walfisz theorem for primes in arithmetic progressions modulo $b^{\eta}\leq L^{\ell}$.
    However, if $\eta$ is a constant then $c$ can be taken to be effective by instead applying an effective form of the prime number theorem for arithmetic progressions for \emph{bounded} moduli (e.g.~\cite[Corollary~11.7]{montgomery2006multiplicative}).
\end{remark}

In order to prove Proposition~\ref{prop: partitioned Zsiflaw Legeis}, we make the key observation that counting $L$-digit reversed primes which have their first $\eta$ digits equal to $r$ is equivalent to counting $L$-digit primes $p$ with $p\equiv\rev{r}$ (mod $b^\eta$). That is,

\begin{align}    
        \rev{\vartheta}^*_L(\eta,r,a,q)=\sum_{\substack{\rev{p}\in\mathcal{B}_L^*\\\rev{p}\equiv a\thinspace(q)\\ p\equiv \rev{r}(b^\eta)}}\log p \label{newrevstardef}.
\end{align}

To account for the conditions~$(\rev{p},b^3-b)=1$ and~$p\equiv\rev{r}\thinspace(b^{\eta})$ in~\eqref{newrevstardef}, we essentially have to rework the main results of the papers~\cite{bhowmik2024telhcirid,bhowmik2025zsiflaw}. This is a rather technical task, which we have made an effort to do as efficiently as possible. Notably, we are able to prove the key error term estimate (see Lemma~\ref{newerrlem}) by utilising Bhowmik and Suzuki's ``weakly digital" framework in~\cite{bhowmik2025zsiflaw}. This will be done in Subsection~\ref{interludesect}, followed by the completion of the proof of Proposition~\ref{prop: partitioned Zsiflaw Legeis} in Subsection~\ref{everythingsect}.

First though, we prove a simple lemma, which we apply several times throughout this section. This lemma and its proof encapsulates the reason why $b^2-1$ is such an important modulus when working with digital reverses. 
\begin{lemma}\label{b21lem}
    For any $b\geq 2$ and $n>0$, we have
    \begin{equation*}
        (n,b^2-1)>1\quad\text{if and only if}\quad (\rev{n},b^2-1)>1. 
    \end{equation*}
    In particular, there are only finitely many primes $p$ with $(\rev{p},b^2-1)>1$, each of which satisfy $p\mid b^2-1$.
\end{lemma}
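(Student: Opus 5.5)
The approach is to compare $n$ and $\rev{n}$ modulo $b-1$ and $b+1$ separately. Write $n=\sum_{0\leq i<L}\varepsilon_i(n)b^i$ with $n\in\cB_L$. Since $b\equiv 1\pmod{b-1}$, both $n$ and $\rev{n}$ are congruent to the digit sum $\sum_i\varepsilon_i(n)$ modulo $b-1$. Hence $n\equiv\rev{n}\pmod{b-1}$.

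Since $b\equiv -1\pmod{b+1}$, we have
\begin{equation*}
n\equiv\sum_i(-1)^i\varepsilon_i(n)\pmod{b+1},\qquad \rev{n}\equiv\sum_i(-1)^{L-1-i}\varepsilon_i(n)\pmod{b+1}.
\end{equation*}
Therefore $\rev{n}\equiv(-1)^{L-1}n\pmod{b+1}$.

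These two congruences combine as follows. Let $d$ be a prime dividing $b^2-1=(b-1)(b+1)$, so $d\mid b-1$ or $d\mid b+1$. In the first case $d\mid n$ if and only if $d\mid\rev{n}$. In the second case the same holds, because $\rev{n}\equiv\pm n\pmod d$. So $n$ and $\rev{n}$ have exactly the same prime divisors among those of $b^2-1$, which gives the claimed equivalence.

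For the final assertion, apply the equivalence with $n=p$. If $(\rev{p},b^2-1)>1$, then $(p,b^2-1)>1$. Since $p$ is prime, this forces $p\mid b^2-1$. There are only finitely many such primes, which proves the second statement.

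The only point needing care is the sign $(-1)^{L-1}$ modulo $b+1$. It is harmless because divisibility by $d$ is unaffected by multiplying by $\pm1$, so no step here is a real obstacle. One should also note that $\rev{n}$ may have fewer digits than $n$ when $\varepsilon_0(n)=0$. The formula $\rev{n}=\sum\varepsilon_i(n)b^{L-1-i}$ still holds as an identity of integers, and the congruence computation above uses only that identity.
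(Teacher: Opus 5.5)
Your proof is correct and is essentially the paper's argument. The paper uses $b\equiv b^{-1}\pmod{b^2-1}$ to get the single congruence $\rev{n}\equiv b^{L-1}n\pmod{b^2-1}$. Your two congruences modulo $b-1$ and $b+1$ are exactly this identity reduced modulo each factor, since $b^{L-1}\equiv 1$ and $b^{L-1}\equiv(-1)^{L-1}$ respectively. Your prime-by-prime conclusion also neatly avoids any issue from $b-1$ and $b+1$ not being coprime when $b$ is odd.
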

\begin{proof}
    Suppose $n$ has $L$ digits so that the base-$b$ expansions of $n$ and $\rev{n}$ are
    \begin{equation*}
        n=\sum_{0\leq i<L}\varepsilon_i(n)b^{i},\quad \rev{n}=\sum_{0\leq i<L}\varepsilon_i(n) b^{L-1-i}.
    \end{equation*}
    Since $b^2\equiv 1\thinspace\text{(mod $b^2-1$)}$, we have $b\equiv b^{-1}\thinspace\text{(mod $b^2-1$)}$ and thus
    \begin{align*}
        \rev{n}&\equiv\sum_{0\leq i<L}\varepsilon_i(n) b^{L-1+i}\pmod{b^2-1}\\
        &\equiv b^{L-1} n\pmod{b^2-1},
    \end{align*}
    from which the desired result follows.
\end{proof}

\subsection{Exponential sum bounds and weakly digital functions}\label{interludesect}
In both~\cite{bhowmik2025zsiflaw} and~\cite{dartyge2025prime}, the key intermediary result required to prove their versions of the Zsiflaw--Legeis theorem is the following exponential sum bound.
\begin{lemma}[{see \cite[Theorem~1.7]{dartyge2025prime}}]\label{dartlem}
    For any $b\geq 2$, there exists $c=c(b)>0$ such that for any integer $L\geq 2$ and any $(h,q)\in\Z^2$ with $2\leq q\leq \exp(c\sqrt{L})$ and $q\nmid (b^2-1)b^Lh$, we have
    \begin{equation}\label{errorlemeq}
        \sup_{t\in[b^{L-1},b^L]}\left|\sum_{b^{L-1}\leq n<t}\Lambda(n)e\left(\frac{h\rev{n}}{q}\right)\right|\ll_b b^{L}\exp\left(-c\sqrt{L}\right),
    \end{equation}
    where $\Lambda(n)$ is the von Mangoldt function:
    \begin{equation*}
        \Lambda(n)=
        \begin{cases}
            \log p,&\text{if $n=p^k$ for some prime p},\\
            0,&\text{otherwise.}
        \end{cases}
    \end{equation*}
\end{lemma}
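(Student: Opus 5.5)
We would follow the Mauduit--Rivat method for primes weighted by a digital function, which is also the strategy of \cite{dartyge2025prime}. Since $L$ is fixed, $f(n):=e(h\rev{n}/q)$ factors over the base-$b$ digits of $n\in\cB_L$:
\begin{equation*}
f(n)=\prod_{0\le i<L}e\left(\frac{h\,\varepsilon_i(n)\,b^{L-1-i}}{q}\right).
\end{equation*}
So $f$ is a digital function in the sense of the ``weakly digital'' framework of \cite{bhowmik2025zsiflaw}. Its discrete Fourier transform modulo $b^\lambda$ is a product of one-digit sums
\begin{equation*}
\Phi(\theta)=\prod_{j}\Bigl|\sum_{0\le \varepsilon<b} e\bigl(\varepsilon(\alpha_j+\theta b^{j})\bigr)\Bigr|,\qquad \alpha_j=\frac{hb^{L-1-j}}{q}.
\end{equation*}
The plan is to decompose $\Lambda$ by Vaughan's identity into type I sums $\sum_{m}a_m\sum_{n}f(mn)$ and type II sums $\sum_m\sum_n a_m b_n f(mn)$. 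We then bound each of these using two Fourier estimates:
\begin{enumerate}
\item an $L^1$ bound $\sum_\theta|\hat f(\theta)|\ll b^{\lambda\beta}$ with $\beta<1/2$, which follows from the product structure;
\item an $L^\infty$ bound $\max_\theta|\hat f(\theta)|\ll \exp(-c'L/\log q)$.
\end{enumerate}
Both sup-norm and truncation estimates restrict to $t\in[b^{L-1},b^L]$ in the usual way, at a cost of $O(L)$ in the bound.

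For the type I sums, we would complete the inner sum over $n$ modulo $b^\lambda$ using the carry-truncation lemma. This replaces $f$ by a function depending only on a block of digits. The complete geometric sums in $n$ then pair with $\hat f$, and the $L^1$ bound gives a power saving whenever $m$ stays below $b^{L(1-\delta)}$. For the type II sums, we apply Cauchy--Schwarz and van der Corput's inequality with shift $r$. This reduces the correlation $f(mn)\overline{f(m(n+r))}$, after carry truncation, to a function of a short window of digits. We then expand in Fourier and bound by $\max|\hat f|$ times a Gauss-type sum controlled by the $L^1$ estimate.

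The crucial input, and the step we expect to be the main obstacle, is the $L^\infty$ bound. A single factor of $\Phi$ is at most $b-c''\|\alpha_j+\theta b^j\|^2$-ish. We need to show that for a positive proportion $\gg 1/\log q$ of indices $j$ the phases are not all near integers, uniformly in $\theta$. This is exactly where the hypothesis $q\nmid(b^2-1)b^Lh$ enters. Write $q'=q/(q,hb^L)$. If $\alpha_j$ and $\alpha_{j+1}$ both lie within $1/(4b^2)$ of the corresponding $\theta$-shifted lattice for consecutive $j$, then the relation $b\alpha_{j+1}=\alpha_j$, combined with the dual relation coming from $\theta b^{j+1}=b\cdot\theta b^j$, forces $(b^2-1)hb^{L-1-j}/q$ to be near an integer. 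Since $q'\nmid b^2-1$, this can happen for at most a run of $O(\log q)$ consecutive $j$ before the orbit $hb^{k}/q$ must move a distance $\gg 1/q$, and hence a distance $\gg1/b^2$ within $O(\log q/\log b)$ steps. The result is a saving of
\begin{equation*}
\exp\bigl(-c'L/\log q\bigr)\le \exp(-c\sqrt{L})\qquad\text{for } q\le\exp(c\sqrt L).
\end{equation*}
We would first try to make this pigeonhole on runs of near-integral $\alpha_j$ precise, treating $\theta$ as a shift that both phases share.

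Finally, we would balance the Vaughan parameters $U,V\approx b^{L/3}$ and the truncation parameter $\lambda$ against the two savings, absorbing the polylogarithmic losses from Vaughan's identity and from the $\sup_t$. This yields \eqref{errorlemeq} after shrinking $c$.
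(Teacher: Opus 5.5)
The paper gives no proof of this lemma; it quotes it from \cite[Theorem~1.7]{dartyge2025prime}. The nearest argument in the paper is the proof of Lemma~\ref{newerrlem}, whose case $d=1$ is exactly this statement. There the phase $h\,\trev_L(n)/q$ is treated as a weakly digital function and Lemma~\ref{kappalem} is applied as a black box. So all that has to be checked is $\gamma_i\gg_b\|b^{L-i-2}(b^2-1)h/q\|^2$, together with the orbit argument giving $\sigma_\xi(\boldsymbol{\alpha})\gg_b\xi/\log q$.

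Your $L^\infty$ paragraph is precisely this computation, with one slip: the orbit to follow is that of $(b^2-1)h/q$ under multiplication by $b$, not that of $h/q$; its reduced denominator does not divide $b^L$ by hypothesis. Your $L^1$ claim $\beta<1/2$ is also true for every $b\geq 2$, though for $b=2$ only after grouping digits in pairs, since the one-digit factor $|\cos\pi t|+|\sin\pi t|$ reaches $\sqrt{2}$. So, given Lemma~\ref{kappalem}, you would already be done. The gap lies in what you build on top: the claim that Vaughan's identity plus your type I/II sketch converts these Fourier bounds into the estimate for $\sum\Lambda(n)e(h\rev{n}/q)$.

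\emph{Type I range.} Completing a type I sum gives $\sum_\theta|\hat f(\theta)|\sum_{m\sim M}\min(N,\|m\theta/b^L\|^{-1})$, which is never smaller than $M\sum_\theta|\hat f(\theta)|$. Against the trivial bound $MN$, and with only $\|\hat f\|_1\le b^{\beta L}$ available, this saves only for $M\leq b^{(1-\beta-\varepsilon)L}$, not for all $M\leq b^{(1-\delta)L}$. With $U=V\approx b^{L/3}$, the Vaughan type I sums run up to $m\approx UV=b^{2L/3}$, which $\beta<1/2$ does not cover. The range $U<m\le UV$ therefore has to be moved into type II.

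\emph{Type II (the real problem).} The type II estimate is asserted rather than proved, and the mechanism you name saves nothing on its own. With $\hat f(\theta)=b^{-\lambda}\sum_{0\le u<b^\lambda}f(u)e(-u\theta/b^\lambda)$, Parseval gives $1=\sum_\theta|\hat f(\theta)|^2\leq\max_\theta|\hat f(\theta)|\sum_\theta|\hat f(\theta)|$. So ``$\max|\hat f|$ times an $L^1$-controlled Gauss-type sum'' never beats the trivial bound unless the bilinear sums supply an independent factor of roughly $b^{-\lambda/2}$, and you have not exhibited one. Nor can the sup-norm bound absorb any leftover $L^1$ loss: here it is only $\exp(-c\lambda/\log q)$, which for $q$ near $\exp(c\sqrt{L})$ is about $\exp(-C\sqrt{L})$, not a power of $b^{\lambda}$.

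\emph{What is missing.} You need a type I/II argument in which, after the Cauchy--Schwarz and van der Corput steps, the Fourier coefficients enter only through $\sum|\hat f|^2=1$ and $\max|\hat f|$, in the spirit of Mauduit--Rivat's treatment of Rudin--Shapiro-type functions. That is what Lemma~\ref{kappalem} provides: its saving depends only on the $L^\infty$-type quantity $\sigma_\xi(\boldsymbol{\alpha})$, and it holds for all $b\geq 2$. Either carry out such an argument in full, or do as the paper does: cite Lemma~\ref{kappalem} and keep only your $L^\infty$ computation.
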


To account for the congruence $p\equiv \rev{r}$ $(b^{\eta})$ in~\eqref{newrevstardef} we require that the following generalisation of Proposition~\ref{dartlem} holds.

\begin{lemma}\label{newerrlem}
    For any $b\geq 2$, there exists $c=c(b)>0$ such that for any integer $L\geq 2$ and any $(h,q)\in\Z^2$ with $2\leq q\leq \exp(c\sqrt{L})$ and $q\nmid (b^2-1)b^Lh$, we have
    \begin{equation}
        \sup_{\substack{t\in[b^{L-1},b^L]\\(d,f)\in\mathbb{Z}^2}}\left|\sum_{\substack{b^{L-1}\leq n<t\\ n\equiv f\thinspace(d)}}\Lambda(n)e\left(\frac{h\rev{n}}{q}\right)\right|\ll_b b^{L}\exp\left(-c\sqrt{L}\right).
    \end{equation}
\end{lemma}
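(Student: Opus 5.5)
The plan is to reduce Lemma~\ref{newerrlem} to a version of Lemma~\ref{dartlem} that carries an extra linear twist $e(\theta n)$, uniformly in $\theta\in\R$, and then to detect the congruence $n\equiv f\ (d)$ with additive characters. The first step is to dispose of large moduli. Let $c$ be the constant to be fixed later. If $d>\exp(c\sqrt{L})$, then bounding the sum trivially gives at most
\begin{equation*}
    \ll \Big(\frac{b^L}{d}+1\Big)L \ll b^L L\exp(-c\sqrt{L}).
\end{equation*}
After shrinking $c$ slightly, this is $\ll b^L\exp(-c'\sqrt{L})$. So we may assume $1\leq d\leq \exp(c\sqrt{L})$.

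For such $d$ we use orthogonality:
\begin{equation*}
    \sum_{\substack{b^{L-1}\leq n<t\\ n\equiv f\,(d)}}\Lambda(n)e\Big(\frac{h\rev{n}}{q}\Big)=\frac1d\sum_{k\bmod d}e\Big(-\frac{kf}{d}\Big)\sum_{b^{L-1}\leq n<t}\Lambda(n)e\Big(\frac{kn}{d}\Big)e\Big(\frac{h\rev{n}}{q}\Big).
\end{equation*}
Hence the left side is bounded by the supremum over $\theta\in\R$ of the inner sum with twist $e(\theta n)$, and the lemma follows once we show
\begin{equation}\label{twistgoal}
    \sup_{\substack{t\in[b^{L-1},b^L]\\ \theta\in\R}}\Big|\sum_{b^{L-1}\leq n<t}\Lambda(n)e(\theta n)e\Big(\frac{h\rev{n}}{q}\Big)\Big|\ll_b b^L\exp(-c\sqrt{L})
\end{equation}
under the hypotheses $2\leq q\leq \exp(c\sqrt L)$ and $q\nmid (b^2-1)b^Lh$.

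To prove \eqref{twistgoal}, I would revisit the proof of Lemma~\ref{dartlem} in the weakly digital framework of Bhowmik and Suzuki~\cite{bhowmik2025zsiflaw}. There, $F(n)=e(h\rev{n}/q)$ is weakly digital, and the estimate comes from Vaughan's identity together with Type~I and Type~II bounds. These bounds rest on two facts: a uniform $L^1$ bound for the discrete Fourier transform of $F$ restricted to blocks of digits, and a small $L^\infty$ (carry-propagation) bound.

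The key observation is that multiplying $F$ by $e(\theta n)$ only translates the frequency variable of these Fourier transforms by $\theta$. An $L^1$ bound over all frequencies, and a supremum bound over all frequencies, are invariant under translation. Likewise, the Type~I sums $\sum_{m}\sum_{n}$ are bounded by completing over the smooth variable, where a linear phase in $mn$ is harmless. So each step goes through verbatim, uniformly in $\theta$.

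The main obstacle is the Type~II sums and the residual ``major arc'' case. In Lemma~\ref{dartlem}, the condition $q\nmid (b^2-1)b^Lh$ ensures that $e(h\rev n/q)$ does not degenerate into a character of $n$ modulo a divisor of $(b^2-1)b^L$. That degenerate case is where Siegel--Walfisz-type input enters. With an extra $e(\theta n)$ twist, this case now requires a bound for $\sum\Lambda(n)e(\theta n)\chi(n)$. I would handle it by the standard Vinogradov/Vaughan minor/major split in $\theta$:
\begin{itemize}
    \item when $\theta$ is close to a rational $a'/d'$ with $d'\leq \exp(c\sqrt L)$, combine the twist with $F$, which reduces to the untwisted case or to the Siegel--Walfisz theorem for moduli $\leq\exp(2c\sqrt L)$;
    \item otherwise, use the Type~II bilinear bound with the Fourier $L^1$ estimate.
\end{itemize}
If this degenerate case cannot be made uniform in $\theta$, the fallback is to keep $\theta=k/d$ rational with $d\leq\exp(c\sqrt L)$. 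Then $e(kn/d)e(h\rev n/q)$ is again a weakly digital function of $n$ with combined modulus $\mathrm{lcm}(d,q)\leq\exp(2c\sqrt L)$, and one can apply the Bhowmik--Suzuki theorem directly to it, halving $c$ at the end.
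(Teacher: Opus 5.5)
\textbf{Gap: the non-degeneracy of the combined seed is never verified.} Your orthogonality reduction to a supremum over linear twists is exactly the paper's first step. Your fallback, treating $\frac hq\,\mathrm{rev}_L(n)+\frac kd n$ as a weakly digital phase and applying Bhowmik--Suzuki's Theorem~3 (Lemma~\ref{kappalem}) directly, is the paper's route. But you never do the one thing that route requires: bound $\kappa=\frac1{10}\sigma_\xi(\boldsymbol\alpha)$ from below for the combined seed $\alpha_i(m)=\frac hq mb^{L-i-1}+\frac kd mb^i$.

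This is not a formality. For a purely linear seed $\alpha_i(m)=\theta mb^i$, every $\gamma_i$ vanishes and Lemma~\ref{kappalem} gives nothing, as it must, since $\sum\Lambda(n)e(\theta n)$ is not small at $\theta=0$. So you must check that the linear part cannot destroy the non-degeneracy supplied by the reversal. The missing observation is that it contributes nothing at all:
\[
b\alpha_i(m)-\alpha_{i+1}(m)=\frac hq\,m\,b^{L-i-2}(b^2-1),
\]
because the two $\frac kd mb^{i+1}$ terms cancel. Taking $m=0$, $n=1$ in \eqref{gammaieq} then gives $\gamma_i(\boldsymbol\alpha)\gg_b\|b^{L-i-2}(b^2-1)h/q\|^2$, independently of $k/d$ and indeed of any real $\theta$. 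The hypothesis $q\nmid(b^2-1)b^Lh$ gives $\min_{0\le i\le L}\|b^i(b^2-1)h/q\|\ge 1/q$. Combined with Bhowmik--Suzuki's Lemma~26, this yields $\kappa\gg_b\xi/\log q\gg\sqrt L/c$, and hence the bound for suitable $c$.

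\textbf{Why the main route goes astray.} Missing this cancellation is what leads your main route to a problem that does not exist. The twist $e(\theta n)$ creates no new degenerate case, because the hypothesis $q\nmid(b^2-1)b^Lh$ is unchanged. The degenerate $h$ are simply excluded in Lemmas~\ref{dartlem} and~\ref{newerrlem}, whose exponential-sum bound is effective and uses no Siegel--Walfisz input. Siegel--Walfisz enters only when those degenerate $h$ are summed to form the main term in the proof of Proposition~\ref{prop: partitioned Zsiflaw Legeis}.

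So you never need a bound for $\sum\Lambda(n)e(\theta n)\chi(n)$. Your proposed major/minor split in $\theta$ is unnecessary, and as stated it is also unjustified: ``combining the twist with $F$'' does not turn $e(a'n/d')e(h\rev{n}/q)$ into a phase of the form $e(h'\rev{n}/q')$. Likewise, the truncation to $d\le\exp(c\sqrt L)$ and the bookkeeping with $\mathrm{lcm}(d,q)$ are superfluous. The non-degeneracy quantity depends only on $h/q$, and the estimate holds uniformly in $\theta\in\R$.
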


In particular, we can recover the same bound as~\eqref{errorlemeq} after any congruence condition is placed on $n$. Fortunately, the proof of Lemma~\ref{newerrlem} is simple if one utilises the general framework in~\cite{bhowmik2025zsiflaw} on \emph{weakly digital} functions, defined as follows.

\begin{definition}{{\cite[Definition~1]{bhowmik2025zsiflaw}}}
    For $b\geq 2$, let $\cA_b$ be the set of sequences of maps
    \begin{equation*}
        \boldsymbol{\alpha}=(\alpha_i:\{0,\ldots ,b-1\}\to\R)_{i=0}^\infty,
    \end{equation*}
    and for any $n\geq 0$ write 
    \begin{equation*}
        n=\sum_{i\geq 0}\varepsilon_i(n)b^i
    \end{equation*}
    for the base-$b$ expansion of $n$. For $\boldsymbol{\alpha}\in\cA_b$ and $\lambda\in\Z_{\geq 0}$, let $f_{\lambda}=f_{\lambda,\boldsymbol{\alpha}}:\Z_{\geq 0}\to\R$ be defined by
    \begin{equation*}
        f_{\lambda,\boldsymbol{\alpha}}=\sum_{0\leq i<\lambda}\alpha_i(\varepsilon_i(n)).
    \end{equation*}
    The functions $f_{\lambda,\boldsymbol{\alpha}}$ are called \emph{weakly digital functions generated by the seed $\boldsymbol{\alpha}$.}
\end{definition}

With this definition, we can state Bhowmik and Suzuki's more general version of  Lemma~\ref{dartlem}. Here, one also requires the notation, for any $\boldsymbol{\alpha}\in\cA_b$,
\begin{equation*}
    \sigma_{\lambda}(\boldsymbol{\alpha}):=\sum_{0\leq i<\lambda}\gamma_i(\boldsymbol{\alpha}),
\end{equation*}
where
\begin{equation}\label{gammaieq}
    \gamma_i(\boldsymbol{\alpha}):=\frac{2\log 2}{2(b-1)b^4(\log b)^2}\sum_{0\leq m<n<b}||(b\alpha_i(m)-\alpha_{i+1}(m))-(b\alpha_i(n)-\alpha_{i+1}(n))||^2.
\end{equation}

\begin{lemma}[{\cite[Theorem 3]{bhowmik2025zsiflaw}}]\label{kappalem}
    For $b\geq 2$, $\boldsymbol{\alpha}\in\cA_b$, $L\in\mathbb{N}$, $2\leq x\leq b^L$, we have
    \begin{equation}\label{kappaeq}
        \sum_{n\leq x}\Lambda(n)e(f_L(n))\ll_b xb^{-\kappa}(\log x)^4,
    \end{equation}
    where $f_L(n)$ is the weakly digital function with seed $\boldsymbol{\alpha}$,
    \begin{equation*}
        \kappa:=\frac{1}{10}\sigma_{\xi}(\boldsymbol{\alpha})\quad\text{with}\quad\xi:=\left[\frac{1}{4}\frac{\log x}{\log b}\right],
    \end{equation*}
    and the implied constant in~\eqref{kappaeq} depends only on $b$.
\end{lemma}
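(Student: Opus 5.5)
The estimate in Lemma~\ref{kappalem} is taken from~\cite{bhowmik2025zsiflaw}, and in this paper we would simply cite it. If we had to prove it ourselves, we would follow the Mauduit--Rivat method for digital functions along primes, with the digit-interaction quantities $\gamma_i(\boldsymbol{\alpha})$ controlling the Fourier decay.

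The first step is to apply Vaughan's identity to $\Lambda$ with parameters $U=V=x^{1/4}$, roughly. This splits $\sum_{n\le x}\Lambda(n)e(f_L(n))$, up to an acceptable loss of $(\log x)^{O(1)}$, into two kinds of sum.
\begin{enumerate}
\item Type~I sums: $\sum_{m\le M}a_m\sum_{n\le x/m}e(f_L(mn))$, with $M\le x^{1/2}$ and $|a_m|\le \log x$.
\item Type~II sums: $\sum_{m\sim M}\sum_{n\sim x/M}a_mb_ne(f_L(mn))$, with $x^{1/4}\le M\le x^{3/4}$.
\end{enumerate}
The target is a saving of $b^{-\kappa}$ in each of these, where $\kappa=\sigma_\xi(\boldsymbol{\alpha})/10$ and $\xi\approx\frac14\log_b x$.

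The central ingredient is a uniform $L^\infty$ bound for the discrete Fourier transform of the truncated digital function. For $0\le\mu<\lambda$, set
\begin{equation*}
F_{\mu,\lambda}(h):=b^{-(\lambda-\mu)}\sum_{0\le u<b^{\lambda-\mu}}e\Big(\sum_{\mu\le i<\lambda}\alpha_i(\varepsilon_{i-\mu}(u))-\frac{hu}{b^{\lambda-\mu}}\Big).
\end{equation*}
This sum factorises over digits into a product of one-digit sums, but the twist $hu/b^{\lambda-\mu}$ couples consecutive digits. We would therefore group the digits in pairs $(i,i+1)$ and bound each two-digit factor. For each pair, we compare the contributions of two digit values $m\ne n$ by applying the elementary inequality $|\sum_{k<b}e(\theta_k)|^2\le b^2-c\sum_{k<l}\|\theta_k-\theta_l\|^2$, with $\theta$ built from $b\alpha_i(\cdot)-\alpha_{i+1}(\cdot)$. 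Multiplying over the pairs should give
\begin{equation*}
|F_{\mu,\lambda}(h)|\le b^{-c'\sum_{\mu\le i<\lambda}\gamma_i(\boldsymbol{\alpha})}
\end{equation*}
uniformly in $h$. The normalising constant in~\eqref{gammaieq} is chosen precisely so that this product bound holds. For the $L^1$ side we would use only Parseval together with Cauchy--Schwarz, since a seed with no structure gives nothing better. This costs a factor of $b^{(\lambda-\mu)/2}$, which we would absorb by keeping the digit window short.

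For the Type~I sums, we fix $m$, split $n$ into residue classes modulo $b^{\lambda}$ and apply Poisson summation (completion). This expresses the inner sum via $F$ and reduces it to the $L^\infty$ bound above, with $\lambda\approx\xi$.

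For the Type~II sums, we apply Cauchy--Schwarz in $m$, followed by the van der Corput inequality with shifts $rb^{\mu_1}$. A carry-propagation lemma then shows that, outside an exceptional set of $n$ of density $b^{-\epsilon}$, the difference $f_L(m(n+rb^{\mu_1}))-f_L(mn)$ depends only on the digits of $mn$ in a window $[\mu_1,\mu_2)$. This replaces $f_L$ by a truncated difference function. Detecting that function by Fourier expansion modulo $b^{\mu_2}$ leads to sums of products of $F_{\mu_1,\mu_2}$ at shifted frequencies, summed against $e(hmn/b^{\mu_2})$. The resulting sum over $m$ and $n$ is handled by a large-sieve or Gauss-sum count of the solutions of $mn\equiv\cdot$ modulo $b^{\mu_2}$. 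One factor of $F$ is then bounded in $L^\infty$ and the other in $L^1$.

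We expect the main obstacle to be the $L^\infty$ Fourier bound, specifically showing that the pairwise incoherences between consecutive digits accumulate multiplicatively across the window. The choice of $\mu_1,\mu_2\approx\xi$ must then be balanced against the Parseval loss and the carry exceptions so that the net saving is $b^{-\sigma_\xi/10}$. The factor $(\log x)^4$ in~\eqref{kappaeq} absorbs the logarithmic losses from Vaughan's identity and from the dyadic decompositions.
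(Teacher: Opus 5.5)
Your proposal agrees with the paper: Lemma~\ref{kappalem} is imported verbatim from \cite[Theorem~3]{bhowmik2025zsiflaw} and the paper gives no proof of its own, so citing it is the entire argument. The optional Mauduit--Rivat outline should not be offered as a proof, however, for two reasons: its Type~I step never removes the digits of $mn$ above position $\lambda$, so $n\mapsto e(f_L(mn))$ is not periodic modulo $b^{\lambda}$ and cannot be completed; and the Parseval loss cannot be absorbed by shortening the window, because $\max_h|F_{\mu,\lambda}(h)|\cdot\sum_h|F_{\mu,\lambda}(h)|\geq\sum_h|F_{\mu,\lambda}(h)|^2=1$ for every window, so that loss has to be paid for by the saving in the $(m,n)$-sum.
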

Using Lemma~\ref{kappalem}, we now proceed with the proof of Lemma~\ref{newerrlem} by encoding the congruence condition $n\equiv f\thinspace(d)$ into a weakly digital function.
\begin{proof}[Proof of Lemma~\ref{newerrlem}]
    As in~\cite[\S 8]{bhowmik2025zsiflaw}, we write
    \begin{equation*}
        \trev_L(n):=\sum_{0\leq i<L}\varepsilon_i(n)b^{L-i-1}
    \end{equation*}
    so that $\trev_L(n)$ agrees with $\rev{n}$ on the set $[b^{L-1},b^L)$. Now, for any $2\leq x\leq b^L$ and $(d,f)\in\Z^2$, consider the sum
    \begin{align}
        \sum_{\substack{n\leq x\\n\equiv f\thinspace(d)}}\Lambda(n)e\left(\frac{h\:\trev_L(n)}{q}\right)&=\frac{1}{d}\sum_{k=1}^de\left(-\frac{kf}{d}\right)\sum_{n\leq x}\Lambda(n)e\left(\frac{h\:\trev_L(n)}{q}\right)e\left(\frac{kn}{d}\right)\notag\\
        &\ll\sup_{k\in\Z}\left|\sum_{n\leq x}\Lambda(n)e\left(\frac{h\:\trev_L(n)}{q}+\frac{kn}{d}\right)\right|\label{supkeq}.
    \end{align}
    We now set
    \begin{equation*}
        \alpha_{i}(n)=\frac{h}{q}nb^{L-i-1}+\frac{k}{d}n b^i,\qquad\boldsymbol{\alpha}=(\alpha_{i})_{i=1}^{L-1}
    \end{equation*}
    and
    \begin{equation*}
        f_{L}(n)=\sum_{0\leq i<L}\alpha_{i}(\varepsilon_i(n)),
    \end{equation*}
    to be the corresponding weakly digital function. In particular, \eqref{supkeq} is equal to
    \begin{equation}\label{supkeq2}
        \sup_{k\in\Z}\left|\sum_{n\leq x}\Lambda(n)e\left(f_L(n)\right))\right|.
    \end{equation}
    This expression can be directly bounded using Lemma~\ref{kappalem}. All that we need to compute is
    \begin{equation*}
        \kappa:=\frac{1}{10}\sigma_{\xi}(\boldsymbol{\alpha})=\frac{1}{10}\sum_{0\leq i<\lambda}\gamma_i(\boldsymbol{\alpha}),
    \end{equation*}
    with $\gamma_i$ as defined in \eqref{gammaieq}. Considering the $m=0$ and $n=1$ terms in~\eqref{gammaieq} gives
    \begin{align*}
        \gamma_i(\boldsymbol{\alpha})&\gg_b||b\alpha_i(0)-\alpha_{i+1}(0)-b\alpha_i(1)+\alpha_{i+1}(1)||^2\\
        &=\left|\left|b^{L-i-2}(b^2-1)\frac{h}{q}\right|\right|\\&\gg_b\left|\left|b^{L-i-1}(b^2-1)\frac{h}{q}\right|\right|.
    \end{align*}
    With this lower bound for $\gamma_i(\boldsymbol{\alpha)}$, one gets a lower bound for $\kappa$ by~\cite[Lemma~26]{bhowmik2025zsiflaw} (or the proof thereof), whereby it is shown that
    \begin{equation*}
        \kappa\gg\sigma_{\xi}(\boldsymbol{\alpha})\gg_b\frac{\xi}{\log\frac{1}{\sigma}}+O_b(1),
    \end{equation*}
    with
    \begin{equation*}
        \sigma:=\min_{0\leq i\leq L}\left|\left|b^i(b^2-1)\frac{h}{q}\right|\right|.
    \end{equation*}
    Since we are assuming $q\nmid (b^2-1)b^Lh$, it follows that $\sigma\gg_b 1/q$ and thus
    \begin{equation*}
        \kappa\gg_b\frac{\xi}{\log q}\gg_b\frac{\log x}{\log b}.
    \end{equation*}
    Therefore, applying Lemma~\ref{kappalem} to the expression~\eqref{supkeq2} gives
    \begin{equation*}
        \sup_{k\in\Z}\left|\sum_{n\leq x}\Lambda(n)e\left(f_L(n)\right))\right|\ll_b x(\log x)^4\exp\left(-c'\frac{\log x}{\log q}\right)
    \end{equation*}
    for some constant $c'$ depending on $b$. Taking $c=\sqrt{c'}\log b-\delta$ for any small $\delta>0$, the above bound reduces to
    \begin{equation*}
        \sup_{k\in\Z}\left|\sum_{n\leq x}\Lambda(n)e\left(f_L(n)\right))\right|\ll_b x\exp\left(-c\sqrt{L}\right)
    \end{equation*}
    for all $q\leq\exp(c\sqrt{L})$. The proposition then follows from an application of the triangle inequality. That is, for any $t\in[b^{L-1},b^L]$ and $q\leq\exp(c\sqrt{L})$,
    \begin{align*}
        \left|\sum_{\substack{b^{L-1}\leq n<t\\ n\equiv f\thinspace(d)}}\Lambda(n)e\left(\frac{h\rev{n}}{q}\right)\right|&\leq \left|\sum_{\substack{n<t\\ n\equiv f\thinspace(d)}}\Lambda(n)e\left(\frac{h\:\trev_L(n)}{q}\right)\right|+\left|\sum_{\substack{n<b^{L-1}\\ n\equiv f\thinspace(d)}}\Lambda(n)e\left(\frac{h\:\trev_L(n)}{q}\right)\right|\\
        &\ll_b b^L\exp(-c\sqrt{L}).\qedhere
    \end{align*}
\end{proof}
\subsection{The proof of Proposition~\ref{prop: partitioned Zsiflaw Legeis}}\label{everythingsect}
We now proceed with the proof of Proposition~\ref{prop: partitioned Zsiflaw Legeis}. Essentially, we extract the main term in~\eqref{revetaasym} by using similar arguments as in~\cite[\S8 and \S11]{bhowmik2024telhcirid}. However, extra care is taken to account for the coprimality condition $(\rev{p},b^3-b)$, and the Siegel--Walfisz theorem is used to handle the congruence condition $p\equiv\rev{r}\thinspace(q)$ appearing in~\eqref{newrevstardef}. The error term is then bounded using Lemma~\ref{newerrlem}.
\begin{proof}[{Proof of Proposition~\ref{prop: partitioned Zsiflaw Legeis}}]
    Throughout we take $q\geq 2$ since if $q=1$ then~\eqref{revetaasym} follows from directly applying the Siegel--Walfisz theorem to~\eqref{newrevstardef}. Now, consider the case when $(\rev{r},b)>1$. Under this condition, we have
    \begin{equation*}
        \rev{\vartheta}^*_L(\eta,r,a,q)\leq \log b^L\ll_bL
    \end{equation*}
    since any prime $p$ counted by \eqref{newrevstardef} satisfies $p\equiv\rev{r}\thinspace (b^L)$ and thus $(p,b)>1$.

    The other exceptional case is when $(a,q,b^3-b)>1$. In this scenario, one has
    \begin{equation*}
        \rev{\vartheta}^*_L(\eta,r,a,q)=0
    \end{equation*}
    since if $(a,q,b^3-b)>1$ and ${\rev{p}\equiv a\thinspace (q)}$ then $(\rev{p},b^3-b)>1$. That is, there are no reversed primes counted by \eqref{newrevstardef}.

    By the above considerations, we may from now on assume that $(\rev{r},b)=1$ and $(a,q,b^3-b)=1$. That is, it suffices to prove the proposition in the case $\kappa(\rev{r})=1$ and $\rho_b(a,q)=1$. Arguing as in \cite[\S 8]{bhowmik2024telhcirid}, we use exponential sums to estimate $\rev{\vartheta}^*_L(\eta,r,a,q)$. In particular, we write
    \begin{align}\label{thetabigeq}
        \rev{\vartheta}^*_L(\eta,r,a,q)& = \frac{1}{q} \sum_{0 \leq h < q} e\left(-\frac{ha}{q}\right)\sum_{\substack{\rev{p}\in \mathcal{B}^{*}_{L}\\p\equiv\rev{r}(b^\eta)}} e \left(\frac{h \overleftarrow{p} }{q} \right)\log p\notag\\
        &=\frac{1}{q} \sum_{\substack{0 \leq h < q\\ q\mid(b^2-1)b^L h}} e\left(-\frac{ha}{q}\right)\sum_{\substack{\rev{p} \in \mathcal{B}^{*}_{L}\\p\equiv\rev{r}(b^\eta)}} e \left(\frac{h \overleftarrow{p} }{q} \right)\log p\notag\\
        &\qquad\qquad\qquad\qquad\qquad+\frac{1}{q} \sum_{\substack{0 \leq h < q\\ q\nmid(b^2-1)b^L h}} e\left(-\frac{ha}{q}\right)\sum_{\substack{\rev{p} \in \mathcal{B}^{*}_{L}\\p\equiv\rev{r}(b^\eta)}} e \left(\frac{h \overleftarrow{p} }{q} \right)\log p .
    \end{align}
    We now show that the first term in \eqref{thetabigeq} gives our desired main term, and that the second term in \eqref{thetabigeq} gives a sufficiently small error term. We begin with an analysis of the error term, given by
    \begin{align}\label{Ejtaqdef}
        E(\eta,r,a,q)&:=\frac{1}{q} \sum_{\substack{0 \leq h < q\\ q\nmid(b^2-1)b^L h}} e\left(-\frac{ha}{q}\right)\sum_{\substack{\rev{p} \in \mathcal{B}^{*}_{L}\\p\equiv\rev{r}(b^\eta)}} e \left(\frac{h \overleftarrow{p} }{q} \right)\log p\notag\\
        &\ll\sup_{\substack{(h,q)\in\mathbb{Z}^2\\q\nmid (b^2-1)b^Lh}}\left|\sum_{\substack{\rev{p} \in \mathcal{B}^{*}_{L}\\p\equiv\rev{r}(b^\eta)}} e \left(\frac{h \overleftarrow{p} }{q} \right)\log p\right|.
    \end{align}
    Recall that the condition $\rev{p}\in\mathcal{B}_L^*$ means that $(\rev{p},b^3-b)=1$, or equivalently, $(\rev{p},b^2-1)=1$ and $(\rev{p},b)=1$. By Lemma~\ref{b21lem} there are only finitely many primes $p$ with $(\rev{p},b^2-1)>1$. Therefore,
    \begin{equation}\label{Ejtaqbound1}
        E(\eta,r,a,q)\ll\sup_{\substack{(h,q)\in\mathbb{Z}^2\\q\nmid (b^2-1)b^Lh}}\left|\sum_{\substack{p\in\mathcal{B}_L\\(\rev{p},b)=1\\p\equiv\rev{r}(b^\eta)}} e \left(\frac{h \overleftarrow{p} }{q} \right)\log p\right|+O_b(1),
    \end{equation}
    where the $\rev{p}\in\mathcal{B}_L^*$ condition has been replaced by $p\in\mathcal{B}_L$ and $(\rev{p},b)=1$. Next, we observe that $(\rev{p},b)=1$ is equivalent to the first base-$b$ digit of $p$ being coprime to $b$. In particular, 
    \begin{equation}\label{Ejtaqbound2}
        E(\eta,r,a,q)\ll_b\sup_{\substack{1\leq j<b\\(j,b)=1}}\sup_{\substack{(h,q)\in\mathbb{Z}^2\\q\nmid (b^2-1)b^Lh}}\left|\sum_{\substack{jb^{L-1}\leq p<(j+1)b^{L-1}\\p\equiv\rev{r}(b^\eta)}} e \left(\frac{h \overleftarrow{p} }{q} \right)\log p\right|+O_b(1).
    \end{equation}
    With a view to apply Lemma~\ref{newerrlem} we then note that
    \begin{equation}\label{pkbound}
        \sum_{\substack{p^k\leq t\\ k\geq 2}}\log p\leq\log t\sum_{k\leq\log t/\log 2}t^{1/k}\ll \sqrt{t} (\log t)^2
    \end{equation}
    and thus
    \begin{equation*}
        E(\eta,r,a,q)\ll\sup_{\substack{1\leq j<b\\(j,b)=1}}\sup_{\substack{(h,q)\in\mathbb{Z}^2\\q\nmid (b^2-1)b^Lh}}\left|\sum_{\substack{jb^{L-1}\leq n<(j+1)b^{L-1}\\n\equiv\rev{r}(b^\eta)}} e \left(\frac{h \overleftarrow{n} }{q} \right)\Lambda(n)\right|+O_b(\sqrt{t}(\log t)^2).
    \end{equation*}
    Now, by the triangle inequality and Lemma~\ref{newerrlem},
    \begin{align*}
        &\left|\sum_{\substack{jb^{L-1}\leq n<(j+1)b^{L-1}\\n\equiv\rev{r}(b^\eta)}} e \left(\frac{h \overleftarrow{n} }{q} \right)\Lambda(n)\right|\\
        &\qquad=\left|\sum_{\substack{ b^{L-1}\leq n<(j+1)b^{L-1}\\n\equiv\rev{r}(b^\eta)}} e \left(\frac{h \overleftarrow{n} }{q} \right)\Lambda(n)-\sum_{\substack{ b^{L-1}\leq n<jb^{L-1}\\n\equiv\rev{r}(b^\eta)}} e \left(\frac{h \overleftarrow{n} }{q} \right)\Lambda(n)\right|\\
        &\qquad\leq 2\sup_{t\in[b^{L-1},b^L]}\left|\sum_{\substack{b^{L-1}\leq n<t\\n\equiv\rev{r}(b^\eta)}} e \left(\frac{h \overleftarrow{n} }{q} \right)\Lambda(n)\right|+O(1).\\
        &\qquad\ll_b \exp(b^L\exp(-c'\sqrt{L}))
    \end{align*}
    for some constant $c'>0$ depending on $b$ and $q\leq\exp(c'\sqrt{L})$. In particular, we have
    \begin{equation}\label{errfinal}
        E(\eta,r,a,q)\ll_b \exp(b^L\exp(-c'\sqrt{L}))
    \end{equation}
    as desired.
    
    We now consider the main term in \eqref{thetabigeq}:
    \begin{equation*}
        M(\eta,r,a,q):=\frac{1}{q} \sum_{\substack{0 \leq h < q\\ q\mid(b^2-1)b^L h}} e\left(-\frac{ha}{q}\right)\sum_{\substack{\rev{p} \in \mathcal{B}^{*}_{L}\\p\equiv\rev{r}(b^\eta)}} e \left(\frac{h \overleftarrow{p} }{q} \right)\log p.
    \end{equation*}
    Rewriting the divisibility condition as
    \begin{equation*}
        q\mid (b^2-1)b^L h\quad\Longleftrightarrow\quad\frac{q}{(q,(b^2-1)b^L)}\mid h
    \end{equation*}
    gives
    \begin{align}\label{Mjtaqdef}
        M(\eta,r,a,q)&=\frac{1}{q}\sum_{0\leq h\leq (q,(b^2-1)b^L)}e\left(-\frac{ha}{(q,(b^2-1)b^L)}\right)\sum_{\substack{\rev{p} \in \mathcal{B}^{*}_{L}\\p\equiv\rev{r}(b^\eta)}}e\left(\frac{h\rev{p}}{(q,(b^2-1)b^L)}\right)\log p\notag\\
        &=\frac{(q,(b^2-1)b^L)}{q}\sum_{\substack{\rev{p}\in\mathcal{B}_L^*\\p\equiv\rev{r}(b^\eta)\\\rev{p}\equiv a\ \text{mod $(q,(b^2-1)b^L)$}}}\log p.
    \end{align}
    As with our analysis of the error term $E(\eta,r,a,q)$, we note that by Lemma~\ref{b21lem} there are only $O_b(1)$ primes $p$ with $(\rev{p},b^2-1)>1$. In addition, the condition $(\rev{p},b)=1$ means the first base-$b$ digit of $p$ is coprime to $b$. These observations imply
    \begin{equation}\label{firstmaineq}
        M(\eta,r,a,q)=\frac{(q,(b^2-1)b^L)}{q}\sum_{\substack{0\leq j<b\\(j,b)=1}}\:\sum_{\substack{jb^{L-1}\leq p<(j+1)b^{L-1}\\p\equiv\rev{r}(b^\eta)\\\rev{p}\equiv a\ \text{mod $(q,(b^2-1)b^L)$}}}\log p+O_b(1).
    \end{equation}
    In what follows, we argue similarly to the proof of \cite[Corollary~1.2]{bhowmik2024telhcirid}. The main difference here is that we must incorporate the congruence condition $p\equiv\rev{r}\thinspace(b^{\eta})$ and the sum over $j$ coprime to $b$. To begin with, we note that since $b$ is coprime to $b^2-1$, we may use the Chinese remainder theorem to rewrite the inner sum in~\eqref{firstmaineq} as
    \begin{equation*}
        I_j:=\sum_{\substack{jb^{L-1}\leq p<(j+1)b^{L-1}\\p\equiv\rev{r}(b^\eta)\\\rev{p}\equiv a\ \text{mod $(q,(b^2-1)b^L)$}}}\log p=\sum_{\substack{jb^{L-1}\leq p<(j+1)b^{L-1}\\p\equiv\rev{r}(b^\eta)\\ \rev{p}\equiv a\ \text{mod $(q,b^2-1)$}\\ \rev{p}\equiv a\ \text{mod $(q,b^L)$}}}\log p.
    \end{equation*}
    As in the proof of Lemma~\ref{b21lem}, we have $\rev{p}\equiv b^{L-1}p$ (mod $b^2-1$) so that 
    \begin{equation*}
        \rev{p}\equiv a\ \text{mod $(q,(b^2-1))$}\quad\Longleftrightarrow\quad p\equiv \overline{b}^{L-1}a\ \text{mod $(q,(b^2-1))$}, 
    \end{equation*}
    where $\overline{b}$ is the multiplicative inverse of $b$ mod $(q,b^2-1)$. Thus, we may slightly rewrite $I_j$ as
    \begin{equation}\label{secondmaineq}
        I_j=\sum_{\substack{jb^{L-1}\leq p<(j+1)b^{L-1}\\p\equiv\rev{r}(b^\eta)\\ p\equiv \overline{b}^{L-1}a\ \text{mod $(q,b^2-1)$}\\ \rev{p}\equiv a\ \text{mod $(q,b^L)$}}}\log p.
    \end{equation}
    Next we deal with the $\rev{p}\equiv a$ (mod $(q,b^L)$) condition. To do so, we let $L_0\geq 0$ be the smallest natural number such that
    \begin{equation*}
        (q,b^{L_0})=(q,b^L).
    \end{equation*}
    Then, we may recast \eqref{secondmaineq} as
    \begin{equation}\label{thirdmaineq}
        I_j=\sum_{\substack{1\leq v<b^{L_0}\\ v\equiv j\thinspace (b)\\ v \equiv a\ \text{mod $(q,b^{L_0})$} }}\sum_{\substack{jb^{L-1}\leq p<(j+1)b^{L-1}\\p\equiv\rev{r}(b^\eta)\\ p\equiv \overline{b}^{L-1}a\ \text{mod $(q,b^2-1)$}\\ \rev{p}\equiv v\thinspace (b^{L_0})}}\log p.
    \end{equation}
    Now, for any $1\leq v<b^{L_0}$ in~\eqref{thirdmaineq} we write
    \begin{equation*}
        v=\sum_{0\leq i<L_0}v_i b^i
    \end{equation*}
    and
    \begin{equation*}
        v^*:=\sum_{0\leq i<L_0}v_i b^{L-i-1}
    \end{equation*}
    for the $L$-digit number in base-$b$ starting with the reversal of $v$ and followed by trailing zeros. In particular, the condition $\rev{p}\equiv v\thinspace(b^{L_0})$ in~\eqref{thirdmaineq} is equivalent to $p$ being in the interval $[v^*,v^*+b^{L-L_0})$. That is,
    \begin{equation}\label{fourthmaineq}
        I_j=\sum_{\substack{1\leq v<b^{L_0}\\ v\equiv j\thinspace (b)\\ v \equiv a\ \text{mod $(q,b^{L_0})$} }}\sum_{\substack{p\in[v^*,v^*+b^{L-L_0})\\p\equiv\rev{r}(b^\eta)\\ p\equiv \overline{b}^{L-1}a\ \text{mod $(q,b^2-1)$}}}\log p.
    \end{equation}
    Since $b^{\eta}\leq L^{\ell}$, the inner sum of~\eqref{fourthmaineq} is readily bounded using the Chinese remainder theorem and the Siegel--Walfisz theorem, yielding
    \begin{align}
        I_j&=\frac{b^{L-L_0}}{\varphi(b^{\eta})\varphi((q,b^2-1))}\sum_{\substack{1\leq v<b^{L_0}\\ v\equiv j\thinspace (b)\\ v \equiv a\ \text{mod $(q,b^{L_0})$}}}1+O_b\left(b^{L+L_0}\exp\left(-c_{SW}\sqrt{L}\right)\right)\label{Ijfinal}
    \end{align}
    for some ineffective constant $c_{\text{SW}}>0$. Substituting~\eqref{Ijfinal} back into~\eqref{firstmaineq} gives
    \begin{align}
        &M(\eta,r,a,q)\\
        &\quad=\frac{(q,(b^2-1)b^L)}{\varphi(b^{\eta})\varphi((q,b^2-1))}\frac{b^{L-L_0}}{q}\sum_{\substack{0\leq j<b\\(j,b)=1}}\sum_{\substack{1\leq v<b^{L_0}\\ v\equiv j\thinspace (b)\\ v \equiv a\ \text{mod $(q,b^{L_0})$}}}1+O_b\left(b^{L+L_0}\exp\left(-c_{SW}\sqrt{L}\right)\right)\notag\\
        &\quad=\frac{(q,(b^2-1)b^L)}{\varphi(b^{\eta})\varphi((q,b^2-1))}\frac{b^{L-L_0}}{q}\sum_{\substack{1\leq v<b^{L_0}\\ (v,b)=1\\ v \equiv a\ \text{mod $(q,b^{L_0})$}}}1+O_b\left(b^{L+L_0}\exp\left(-c_{SW}\sqrt{L}\right)\right).\label{Mneweq}
    \end{align}
    Here, since $b^{L_0}$ is divisible by $(q,b^{L_0})$,
    \begin{equation}\label{isocounteq}
        \sum_{\substack{1\leq v<b^{L_0}\\ (v,b)=1\\ v \equiv a\ \text{mod $(q,b^{L_0})$}}}1=\frac{\varphi(b^{L_0})}{\varphi((q,b^{L_0}))}=\frac{\varphi(b^{L_0})}{\varphi((q,b^{L}))}.
    \end{equation}
    Therefore,
    \begin{align}
        M(\eta,r,a,q)&=\frac{(q,(b^2-1)b^L)}{\varphi(b^{\eta})\varphi((q,b^2-1))}\frac{b^{L-L_0}}{q}\frac{\varphi(b^{L_0})}{\varphi((q,b^L))}+O_b\left(b^{L+L_0}\exp\left(-c_{SW}\sqrt{L}\right)\right)\notag\\
        &=\frac{(q,b^3-b)}{\varphi((q,b^3-b))}\frac{b^{L-\eta}}{q}+O_b\left(b^{L+L_0}\exp\left(-c_{SW}\sqrt{L}\right)\right)\label{finalmeq},
    \end{align}
    where in the second line we used that
    \begin{equation*}
        \frac{\varphi(b^{L_0})}{\varphi(b^{\eta})}=\frac{b^{L_0}}{b^{\eta}}\quad\text{and}\quad\frac{(q,(b^2-1)b^{L})}{\varphi((q,(b^2-1)b^L))}=\frac{(q,b^3-b)}{\varphi((q,b^3-b))},
    \end{equation*}
    which follows upon applying the standard identity $\varphi(n)=n\prod_p(1-1/p)$. To finish, we have to show that the contribution of $b^{L_0}$ to the error term in~\eqref{finalmeq} is small. This is done precisely as on~\cite[p.~27]{bhowmik2024telhcirid} but we repeat the argument here for completeness.

    So, for a prime $p$, let $v_p(n)$ denote the $p$-adic order of $n$. That is, the highest power of $p$ which divides $n$. If $q\leq\exp(c''\sqrt{L})$ for a small constant $c''>0$, then for any prime $p$,
    \begin{equation*}
        v_p(q)\leq\frac{\log q}{\log 2}\leq\frac{c''\sqrt{L}}{\log 2}<L.
    \end{equation*}
    It then follows that
    \begin{equation*}
        (q,b^{\lfloor\frac{c\sqrt{L}}{\log 2}\rfloor})=\prod_{p\mid (q,b)}p^{\min(v_p(q),v_p(b)\lfloor\frac{c\sqrt{L}}{\log 2}\rfloor)}=\prod_{p\mid (q,b)}p^{v_p(q)}=\prod_{p\mid (q,b)}p^{\min(v_p(q),v_p(b)L)}=(q,b^L)
    \end{equation*}
    so that $L_0\leq \frac{c\sqrt{L}}{\log 2}$ by minimality. Thus, provided $c<\frac{1}{2}c_{SW}$ then~\eqref{finalmeq} becomes
    \begin{equation}\label{FinalFinalMeq}
        M(\eta,r,a,q)=\frac{(q,b^3-b)}{\varphi((q,b^3-b))}\frac{b^{L-\eta}}{q}+O_b\left(b^{L}\exp\left(-c''\sqrt{L}\right)\right).
    \end{equation}
    To conclude, we substitute~\eqref{FinalFinalMeq} and~\eqref{errfinal} into~\eqref{thetabigeq}, and incorporate back in the exceptional cases when $(\rev{r},b)>1$ and $(a,q,b^3-b)>1$, giving
    \begin{equation*}
        \rev{\vartheta}_L^*(\eta,r,a,q)=\kappa_b(\rev{r})\rho_b(a,q)\frac{(q,b^3-b)}{\varphi((q,b^3-b))}\frac{b^{L-\eta}}{q} +O_b\bigg(b^L \exp(-c\sqrt{L})\bigg),
    \end{equation*}
    as desired.
\end{proof}

\section{Proof of Theorems~\ref{thm:modifiedZsiflaw} and~\ref{thm: smooth zsiflaw legeis}}\label{smoothsect}
In this section we show how the proofs of Theorems~\ref{thm:modifiedZsiflaw} and~\ref{thm: smooth zsiflaw legeis} follow readily from Proposition~\ref{prop: partitioned Zsiflaw Legeis}, proven in the previous section.

\subsection{Proof of Theorem~\ref{thm:modifiedZsiflaw}}
We begin with the proof of Theorem~\ref{thm:modifiedZsiflaw}.
\begin{proof}[Proof of Theorem~\ref{thm:modifiedZsiflaw}]
    We have
    \begin{equation*}
        \rev{\vartheta}^*_L(a,q):=\sum_{\substack{\rev{p}\in\cB_L^*\\\rev{p}\equiv a\thinspace(\text{mod}\ q)}}\log p=\sum_{1\leq r<b}\sum_{\substack{rb^{L-1}\leq \rev{p}<(r+1)b^{L-1}\\\rev{p}\equiv a(q)\\ (\rev{p},b^3-b)=1}}\log p.
    \end{equation*}
    Hence, applying Proposition~\ref{prop: partitioned Zsiflaw Legeis} with $\eta=1$ gives
    \begin{align*}
        \vartheta_L^*(a,q)&=\sum_{1\leq r<b}\kappa_b(\rev{r})\rho_b(a,q)\frac{(q,b^3-b)}{\varphi((q,b^3-b))}\frac{b^{L-1}}{q}+O_{b}\bigg(b^L \exp(-c\sqrt{L})\bigg)\\
        &=\frac{\varphi(b)}{b}\frac{(q,b^3-b)}{\varphi((q,b^3-b))}\thinspace \frac{\rho_b(a,q)}{q}b^{L}+O_b\left(b^L\exp\left(-c\sqrt{L}\right)\right),
    \end{align*}
    as desired. As discussed in the remark after Proposition~\ref{prop: partitioned Zsiflaw Legeis}, the constant $c$ is effectively computable since $\eta$ was chosen to be a constant.
\end{proof}

\subsection{Proof of Theorem~\ref{thm: smooth zsiflaw legeis}}
In order to prove Theorem~\ref{thm: smooth zsiflaw legeis}, we first prove a slight variation with $x$ taken to be $x=Rb^{L-\eta}$ for some integer $R$ with $b^{\eta-1}<R\leq b^{\eta}$. 
\begin{lemma}\label{lemma: cumulative zsiflaw legeis}
    Keep the same notation and conditions as Theorem~\ref{thm: smooth zsiflaw legeis}. Let $\eta\in\N$ such that $b^{\eta}\leq L^{\ell}$ for any fixed $\ell>0$. Then, for any $b^{\eta-1}< R\leq b^\eta$ we have
    \begin{equation*}
        \rev{\vartheta}^*(Rb^{L-\eta};a,q)=\frac{(q,b^3-b)}{\varphi((q,b^3-b))}\frac{\rho_b(a,q)}{q} |\fB(Rb^{L-\eta})| + O_b\bigg(b^L\exp(-c\sqrt{L})\bigg),
    \end{equation*}
    where $c>0$ is an ineffective constant and depending on $b$.
\end{lemma}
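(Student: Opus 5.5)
We split $\rev{\vartheta}^*(Rb^{L-\eta};a,q)$ according to the digit length of $\rev{p}$. Write
\begin{equation*}
    \rev{\vartheta}^*(Rb^{L-\eta};a,q)=\sum_{L'<L}\rev{\vartheta}^*_{L'}(a,q)+\sum_{b^{\eta-1}\leq r<R}\rev{\vartheta}^*_L(\eta,r,a,q)+O_b(L),
\end{equation*}
where the first sum counts reversed primes with fewer than $L$ digits. The second sum covers the $L$-digit reversed primes below $Rb^{L-\eta}$, that is, those whose first $\eta$ digits form some $r$ with $b^{\eta-1}\le r<R$. The $O_b(L)$ term absorbs the possible single endpoint $\rev{p}=Rb^{L-\eta}$. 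We use $\rev{\vartheta}^*_L(\eta,r,a,q)$ as in~\eqref{thetaetadef}; note that $\rev{p}$ is $L$-digit but $p$ need not be, since trailing zeros of $p$ are lost. The coprimality $(\rev{p},b)=1$ forces the last digit of $\rev{p}$ to be nonzero, i.e.\ the leading digit of $p$ is nonzero, so $p$ and $\rev{p}$ have the same length. The identity~\eqref{newrevstardef} therefore applies and everything matches the definitions.

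For the $L$-digit part, we apply Proposition~\ref{prop: partitioned Zsiflaw Legeis} to each $r$. The main terms sum to
\begin{equation*}
    \frac{(q,b^3-b)}{\varphi((q,b^3-b))}\frac{\rho_b(a,q)}{q}\,b^{L-\eta}\#\{b^{\eta-1}\le r<R:(\rev{r},b)=1\}.
\end{equation*}
Here $b^{L-\eta}\#\{r\}$ is exactly $\#\fB(Rb^{L-\eta})-\#\fB(b^{L-1})$. This holds because an $L$-digit $n<Rb^{L-\eta}$ has leading digit coprime to $b$ (equivalently $(\rev{n},b)=1$) precisely when its top $\eta$-digit block $r$ satisfies $(\rev{r},b)=1$, and each such $r$ contributes $b^{L-\eta}$ integers. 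The error terms sum to $O(b^\eta\, b^L e^{-c\sqrt L})$. Since $b^\eta\le L^\ell$, this is absorbed by shrinking $c$ slightly, with the implied constant depending on $\ell$.

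For the shorter lengths we use Theorem~\ref{thm:modifiedZsiflaw}, or Proposition~\ref{prop: partitioned Zsiflaw Legeis} with $\eta=1$. Its hypothesis $q\le\exp(c\sqrt{L'})$ fails for small $L'$. So we handle $L'\ge L/4$ (say) with the asymptotic, giving main terms $\frac{(q,b^3-b)}{\varphi((q,b^3-b))}\frac{\rho_b(a,q)}{q}(\#\fB(b^{L'})-\#\fB(b^{L'-1}))$ by the stated identity for $\#\fB$. Their errors total $\sum_{L'\le L} b^{L'}e^{-c\sqrt{L'}}\ll b^Le^{-c\sqrt{L}}$ after adjusting $c$, since the sum is dominated geometrically by its top terms. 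For $L'<L/4$, both the trivial bound on $\rev{\vartheta}^*$ and the corresponding main terms are $O(b^{L/4}L)$, which is negligible. Telescoping the $\#\fB$ differences (the small range contributes $\#\fB(b^{L/4})\ll b^{L/4}$) yields $\#\fB(Rb^{L-\eta})$ with the common prefactor.

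The main obstacle is bookkeeping rather than analysis. We must choose the cutoff so that $q\le\exp(c\sqrt{L})$ implies $q\le \exp(c_0\sqrt{L'})$ for all retained $L'$; with $L'\ge L/4$ this works if we take $c=c_0/2$. We also need the $b^\eta\le L^\ell$ loss to be absorbed uniformly. Finally, we must check the boundary case $R=b^\eta$, where $Rb^{L-\eta}=b^L$; this is harmless because $b^L$ is itself not a reversed prime coprime to $b$.
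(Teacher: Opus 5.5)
Your proposal is correct and follows essentially the paper's route: split by digit length, apply Proposition~\ref{prop: partitioned Zsiflaw Legeis} to each $\eta$-digit block $b^{\eta-1}\le r<R$ at length $L$ and with $\eta=1$ at each shorter length, telescope the $\#\fB$ main terms, and absorb the factor $R\le b^{\eta}\le L^{\ell}$ by shrinking $c$. Your cutoff at $L'\ge L/4$, with trivial bounds below it, actually repairs a point the paper glosses over: the paper applies the proposition at every length $j<L$ without checking $q\le\exp(c\sqrt{j})$. (One small slip: $p$ and $\rev{p}$ have the same length because $b\nmid p$ for every prime except possibly $p=b$, not because of the condition $(\rev{p},b)=1$.)
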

\begin{proof}
    Using the notation~\eqref{thetaetadef}, we first note that
    \begin{equation*}
    \begin{split}
        \rev{\vartheta}^*(Rb^{L-\eta};a,q)=&\sum_{j=1}^{L-1}\sum_{r=1}^{b-1}\rev{\vartheta}_{j}^*(1,r,a,q)+\sum_{b^{\eta-1}\leq r<R}\rev{\vartheta}_L^*(\eta,r,a,q).
    \end{split}
    \end{equation*}
    Applying Proposition~\ref{prop: partitioned Zsiflaw Legeis} then gives
    \begin{equation*}
    \begin{split}
        &\rev{\vartheta}^*(Rb^{L-\eta};a,q)\\
        &\quad=\frac{(q,b^3-b)}{\varphi((q,b^3-b))}\frac{\rho_b(a,q)}{q}\bigg(\sum_{j=1}^{L-1}b^{j-1}\sum_{r=1}^{b-1}\kappa_b(\rev{r})+b^{L-\eta}\sum_{b^{\eta-1}\leq r<R}\kappa_b(\rev{r}) \bigg)\\
        &\qquad\qquad\qquad\qquad\qquad\qquad\qquad\qquad\qquad\qquad\qquad\qquad+O_b\left((L+R)\thinspace b^{L}\exp(-c'\sqrt{L})\right)\\
        &\quad=\frac{(q,b^3-b)}{\varphi((q,b^3-b))}\frac{\rho_b(a,q)}{q} |\fB(Rb^{L-\eta})|+O_b\left((L+R)\thinspace b^{L}\exp(-c'\sqrt{L})\right)
    \end{split}
    \end{equation*}
    where $c'>0$ is an ineffective constant depending on $b$. To finish, we note that $R\leq b^{\eta}\leq L^{\ell}$ so that
    \begin{equation*}
        (L+R)b^L\exp(-c'\sqrt{L})\ll_b b^L\exp(-c\sqrt{L})
    \end{equation*}
    for $c=c'/2$ say.
\end{proof}
We now use Lemma~\ref{lemma: cumulative zsiflaw legeis} to finish the proof of Theorem~\ref{thm: smooth zsiflaw legeis}.
\begin{proof}[Proof of Theorem~\ref{thm: smooth zsiflaw legeis}]
    For some large value of $x$, let $L\in\N$ be the unique integer such that $b^{L-1}< x\leq b^L$. Next, let $\eta\in\N$ be such that 
    \begin{equation}\label{etabounds}
        {(\log x)^{A+1}< b^\eta< L^{A+2}}.
    \end{equation} 
    Furthermore, let $R\in\N$ be the unique integer such that $\frac{x}{b^{L-\eta}}-1\leq R< \frac{x}{b^{L-\eta}}$. Hence, $x-b^{L-\eta}\leq Rb^{L-\eta}<x$ and thus
    \begin{equation*}
            \rev{\vartheta}^*(x;a,q)=\rev{\vartheta}^*(Rb^{L-\eta};a,q)+O_b(Lb^{L-\eta}).
    \end{equation*}
    Applying Lemma~\ref{lemma: cumulative zsiflaw legeis} then gives
    \begin{equation*}
    \begin{split}
        \rev{\vartheta}^*(x;a,q)&= \frac{(q,b^3-b)}{\varphi((q,b^3-b))}\frac{\rho_b(a,q)}{q} |\fB(Rb^{L-\eta})|+O_b(Lb^{L-\eta})\\
        &=\frac{(q,b^3-b)}{\varphi((q,b^3-b))}\frac{\rho_b(a,q)}{q} |\fB(x)|+O_b(Lb^{L-\eta}).
    \end{split}
    \end{equation*}
    We conclude the proof by noting that $Lb^{L-\eta}\ll_{b,A} \frac{x}{(\log x)^A}$ by \eqref{etabounds}.
\end{proof}

\section{Proof of Theorems~\ref{ternaryhcabthm1},~\ref{ternaryhcabthm2} and~\ref{almostthm:intro}}\label{ternsect}
In this section we prove Theorems~\ref{ternaryhcabthm1},~\ref{ternaryhcabthm2} and~\ref{almostthm:intro} using the circle method and our refined Zsiflaw--Legeis theorem (Theorem~\ref{thm: smooth zsiflaw legeis}). Since the proof of each theorem is similar, we focus on the proof of Theorem~\ref{ternaryhcabthm1}, followed by a discussion of the proofs of Theorems~\ref{ternaryhcabthm2} and~\ref{almostthm:intro} at the end of this section (see Subsection~\ref{discsubsect}). From this discussion, it will also be clear as for why one has the conjectural asymptotic for Hcabdlog's conjecture given in Conjecture~\ref{asymhcabcon}. 

The general approach is to modify the standard proof of Vinogradov's three prime theorem (see e.g.~\cite[Chapter~10]{murty2023introduction}), accounting for extra technicalities associated with dealing with reversed primes as opposed to primes.

Before beginning the proof of Theorem~\ref{ternaryhcabthm1}, we first give explicit bounds for the main terms $\cS_{1,2}(N)$ and $\cS_{2,1}(N)$ defined in~\eqref{S12def} and~\eqref{S21def} respectively. This ensures that~\eqref{r12asym} and~\eqref{r21asym} are in fact asymptotic expressions.

\begin{lemma}\label{S12S21lem}
    Let $b\geq 2$ be fixed, and $\cS_{1,2}(N)$ and $\cS_{2,1}(N)$ be as defined in~\eqref{S12def} and~\eqref{S21def} respectively. Then, for sufficiently large odd $N$,
    \begin{equation}\label{S12bounds}
        \frac{N^2}{16b^2}\leq\cS_{1,2}(N)\leq\frac{N^2}{2} 
    \end{equation}
    and
    \begin{equation}\label{S21bounds}
        \frac{N^2}{8b}-\frac{N}{4b}\leq\cS_{2,1}(N)\leq\frac{N^2}{2}. 
    \end{equation}
    In particular, $\cS_{1,2}(N),\cS_{2,1}(N)\asymp_b N^2$.
\end{lemma}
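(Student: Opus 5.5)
The plan is to prove all four inequalities by elementary counting. No analytic input is needed; we only need lower bounds on how many integers in suitable ranges have leading digit coprime to $b$.

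\emph{Upper bounds.} Since the summation conditions in \eqref{S12def} and \eqref{S21def} only restrict the triples further, both $\cS_{1,2}(N)$ and $\cS_{2,1}(N)$ are at most the number of positive triples $(n_1,n_2,n_3)$ with $n_1+n_2+n_3=N$. That number is $\binom{N-1}{2}\leq N^2/2$, which gives both upper bounds at once.

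\emph{Lower bound for $\cS_{2,1}(N)$.} Write $\cS_{2,1}(N)=\sum_{n_3\in\fB(N-2)}(N-n_3-1)$, since for each admissible $n_3$ there are exactly $N-n_3-1$ choices of $(n_1,n_2)$ with $n_1,n_2\geq 1$. Let $L$ satisfy $b^{L-1}\leq N/2<b^L$. Every integer in $[b^{L-1},2b^{L-1})$ has leading digit $1$, so all of them lie in $\fB$. Taking the integers $n_3\in[b^{L-1},\min(2b^{L-1},N/2))$, each one contributes at least $N/2-1$. The number of such $n_3$ is at least about $\min(b^{L-1},N/2-b^{L-1})$.
\begin{itemize}
\item If $N/2\geq 2b^{L-1}$, this count is $b^{L-1}>N/(2b)$.
\item Otherwise $N/2<2b^{L-1}$, and it is cleaner to use a smaller block. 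Take $n_3\in[b^{L-2},2b^{L-2})$ (digit length $L-1$, leading digit $1$). This block has $b^{L-2}\geq N/(2b^2)$ elements, which is too weak for the claimed $1/(8b)$ constant.
\end{itemize}
To reach $1/(8b)$, I would instead use the block of integers in $[b^{L'-1},2b^{L'-1})$ with $L'$ chosen so that $N/4\leq 2b^{L'-1}$ but $b^{L'-1}\leq N/2$. Equivalently, pick the largest power $b^{k}\leq N/2$, which satisfies $b^k> N/(2b)$. Then $[b^k,\min(2b^k,N/2))$ has at least $\min(b^k,N/2-b^k)$ elements. If $b^k\leq N/4$, the count is at least $b^k>N/(2b)$, and each element contributes at least $N/2-1$, giving roughly $N^2/(4b)$. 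If $b^k>N/4$, use the interval $[b^k/2,b^k)$ intersected with leading-digit-coprime integers, or simply the block $[b^{k},N/2)$, which has at least $N/2-b^k$ elements. When that block is short, fall back to the interval $[N/4,N/2)\cap\fB$, which contains at least $N/(4b)$ elements. This last claim holds because any interval $[y,2y)$ contains a full block $[jb^m,(j+1)b^m)$ with $j=1$ or a block of length at least $y/b$ consisting of numbers with leading digit $1$. Multiplying a count of $N/(4b)$ by $N/2-1$ gives $\frac{N^2}{8b}-\frac{N}{4b}$, which is exactly the stated form. So the core claim to verify carefully is
\[
\#\big([N/4,N/2)\cap\fB\big)\ \geq\ \frac{N}{4b}\quad(\text{up to }O(1)),
\]
proved by locating a leading-digit-$1$ block inside $[N/4,N/2)$.

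\emph{Lower bound for $\cS_{1,2}(N)$.} Restrict to $n_2,n_3\in[N/4,N/2)\cap\fB$ with $n_1=N-n_2-n_3\geq 1$, which holds automatically since $n_2+n_3<N$. By the claim above, this gives at least $(N/(4b))^2=N^2/(16b^2)$ triples, which matches \eqref{S12bounds}.

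The main obstacle is the digit-block counting lemma for $[N/4,N/2)$ with the exact constant $1/(4b)$. I would prove it by taking $m$ with $b^m\leq N/4<b^{m+1}$. If $[N/4,N/2)$ contains some power $b^{m+1}$, then the portion $[b^{m+1},\min(2b^{m+1},N/2))$ consists of leading-digit-$1$ numbers. If not, the whole interval has the single digit length $m+1$, and among the numbers with a given leading digit $j$ we keep only those with $j$ coprime to $b$. Since $j=1$ is always admissible, a window of length $N/4$ contains at least a $1/b$ proportion of admissible numbers, after a short case check at the block boundaries.
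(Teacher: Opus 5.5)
Your upper bounds are correct and match the paper's. The lower bounds, however, rest on the claim $\#([N/4,N/2)\cap\fB)\geq N/(4b)$, and this claim is false for some bases. Take $b=6$ and the odd number $N=8\cdot 6^m+1$. The integers in $[N/4,N/2)$ are $2\cdot 6^m+1,\dots,4\cdot 6^m$. Their leading base-$6$ digits are $2$, $3$ and (for the last one) $4$, and none of these is coprime to $6$. So the intersection is empty, and your restricted count for $\cS_{1,2}(N)$ is zero. The bases $b=12$ and $b=30$ fail in the same way.

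Both of your justifications break down here:
\begin{itemize}
\item For every $b\geq 5$, the window $[2b^m,4b^m)$ contains no integer with leading digit $1$: it lies after $[b^m,2b^m)$ and before $b^{m+1}\geq 5b^m$.
\item The fact that $j=1$ is admissible says nothing about a single-length window whose leading digits never reach $1$.
\end{itemize}

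The repair is not to discard the small integers. Following the paper, restrict only to $n_2,n_3\in\fB(N/2)$ for $\cS_{1,2}$, and to $n_2\leq N/2$, $n_3\in\fB(N/2)$ for $\cS_{2,1}$. Since $N$ is odd, this forces $n_1\geq 1$, so $\cS_{1,2}(N)\geq\#\fB(N/2)^2$ and $\cS_{2,1}(N)\geq\#\fB(N/2)\lfloor N/2\rfloor$. Then pick $L$ with $2b^L\leq N/2<2b^{L+1}$. The block $[b^L,2b^L)$ lies in $[1,N/2]$, consists of integers with leading digit $1$, and has $b^L>N/(4b)$ elements. This gives both stated lower bounds.

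You actually had this argument in your first attempt and discarded it through a miscalculation. In the case $b^{L-1}\leq N/2<2b^{L-1}$ you know $b^{L-1}>N/4$. So the block $[b^{L-2},2b^{L-2})$ (which lies below $N/2$) has $b^{L-2}>N/(4b)$ elements, not merely $N/(2b^2)$. That is exactly enough, since each such $n_3$ contributes $N-n_3-1\geq N/2-1$ to $\cS_{2,1}(N)$. The same block also serves for both $n_2$ and $n_3$ in $\cS_{1,2}(N)$.
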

\begin{proof}
    The upper bounds for $\cS_{1,2}(N)$ and $\cS_{2,1}(N)$ follow upon noting that the number of representations of $N$ as the sum of three positive integers is
    \begin{equation*}
        \binom{N-1}{2}=\frac{(N-1)(N-2)}{2}\leq\frac{N^2}{2}.
    \end{equation*}
    For the lower bounds, we use that
    \begin{equation*}
       \cS_{1,2}(N)=\sum_{\substack{n_1,n_2,n_3\leq N\\n_1+n_2+n_3=N\\(\rev{n_2}\rev{n_3},b)=1}}1\geq \sum_{\substack{n_2,n_3\in \fB(N/2)\\ n_1\leq N\\ n_1+n_2+n_3=N}}1 \geq\#\fB\left(\frac{N}{2}\right)^2
    \end{equation*}
    and similarly
    \begin{equation*}
       \cS_{2,1}(N)=\sum_{\substack{n_1,n_2,n_3\leq N\\n_1+n_2+n_3=N\\(\rev{n_3},b)=1}}1\geq \sum_{\substack{n_2\leq N/2,\: n_3\in\fB(N/2)\\n_1\leq N\\n_1+n_2+n_3=N}}1 \geq\#\fB\left(\frac{N}{2}\right)\left\lfloor\frac{N}{2}\right\rfloor.
    \end{equation*}
    To conclude, we then just have to show that \begin{equation}\label{BNbound}
        \#\fB\left(\frac{N}{2}\right)\geq\frac{N}{4b}.
    \end{equation}
    To see why~\eqref{BNbound} holds, let $L\in\N$ be the unique integer with
    \begin{equation}\label{N2bounds}
        2b^L\leq \frac{N}{2}<2b^{L+1}.
    \end{equation}
    Since each of the $b^L$ integers $n$ with $b^L\leq n<2b^L$ satisfy $(\rev{n},b)=1$, it follows that
    \begin{equation}\label{BN2lb}
        \#\fB\left(\frac{N}{2}\right)\geq b^L.
    \end{equation}
    By the upper bound in~\eqref{N2bounds} we have $b^L\geq N/4b$, which substituted into~\eqref{BN2lb} gives~\eqref{BNbound} as desired.
\end{proof}

In the following subsections, we set up and apply the circle method for the proof of Theorem~\ref{ternaryhcabthm1}.

\subsection{Setting up the circle method}\label{setupsub}
With a view to prove Theorem~\ref{ternaryhcabthm1}, we express $\cR_{1,2}(N)$ as
\begin{align}\label{R12inteq}
    \cR_{1,2}(N)&=\sum_{p_1\leq N}\sum_{\substack{\rev{p_2},\rev{p_3}\leq N\\(\rev{p_2}\rev{p_3},b^3-b)=1}}\int_{0}^1 e\left(\alpha(p_1+\rev{p_2}+\rev{p_3}-N)\right)\log p_1\log p_2\log p_3\:\mathrm{d}\alpha\notag\\
    &=\int_0^1S(\alpha)\rev{S}_b^2(\alpha)e(-N\alpha)\mathrm{d}\alpha,
\end{align}
where
\begin{equation*}
    S(\alpha)=S(N,\alpha):=\sum_{p\leq N}e(p\alpha)\log p
\end{equation*}
and
\begin{equation*}
    \rev{S}_b(\alpha)=\rev{S}_b(N,\alpha):=\sum_{\substack{\rev{p}\leq N\\(\rev{p},b^3-b)=1}}e(\rev{p}\alpha)\log p.
\end{equation*}

As in the usual setup for the circle method, we let, for some $B\geq 2$,
\begin{equation}\label{Qdef}
    Q=(\log N)^B.    
\end{equation}
Then, for $1\leq q\leq Q$ and $0\leq a\leq  q$ with $(a,q)=1$, we define the \emph{major arc}
\begin{equation}\label{Maqdef}
    \fM(a,q):=\bigg\{\alpha\in\R: \bigg|\alpha-\frac{a}{q}\bigg|\leq \frac{Q}{N} \bigg\}\cap[0,1].
\end{equation}
For sufficiently large $N$, the major arcs are disjoint and we write
\begin{equation*}
    \fM:=\bigcup_{q\leq Q}\bigcup_{\substack{a\leq q\\(a,q)=1}}\fM(a,q)
\end{equation*}
for the \emph{set of major arcs}, and
\begin{equation*}
    \fm:=[0,1]\setminus\fM
\end{equation*}
for the \emph{set of minor arcs}. Our goal now is to use Theorem~\ref{thm: smooth zsiflaw legeis} to extract the desired asymptotic for $\cR_{1,2}(N)$ from the contribution of the major arcs in~\eqref{R12inteq}, whilst also showing that the contribution from the minor arcs is asymptotically small.

\subsection{Some exponential sum bounds and identities}
Before diving into our major and minor arc estimates, we require several results on exponential sums. The first of these is usually called \emph{Ramanujan's sum}.

\begin{lemma}[{See e.g.~\cite[p.~164]{apostol1976introduction}}]\label{ramlem}
    For any $a,q\in\N$, let
    \begin{equation}\label{ramdef}
        c_q(a):=\sum_{{(r,q)=1}}e(ra/q).
    \end{equation}
    Then,
    \begin{equation*}
        c_q(a)=\frac{\mu\left(\frac{q}{(a,q)}\right)\varphi(q)}{\varphi\left(\frac{q}{(a,q)}\right)}.
    \end{equation*}
    In particular, if $(a,q)=1$ then $c_q(a)=\mu(q)$.
\end{lemma}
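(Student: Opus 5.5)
The plan is the classical derivation of Ramanujan's sum via Möbius inversion; it is self-contained and uses nothing from the paper beyond elementary multiplicative number theory. The sum in~\eqref{ramdef} runs over $r$ modulo $q$ with $(r,q)=1$. First detect the coprimality condition with $\sum_{d\mid (r,q)}\mu(d)$. Interchanging sums gives
\begin{equation*}
    c_q(a)=\sum_{d\mid q}\mu(d)\sum_{\substack{1\leq r\leq q\\ d\mid r}}e(ra/q)=\sum_{d\mid q}\mu(d)\sum_{1\leq s\leq q/d}e\Big(\frac{sa}{q/d}\Big).
\end{equation*}
The inner geometric sum equals $q/d$ if $(q/d)\mid a$ and $0$ otherwise. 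Writing $e=q/d$, this yields the standard formula
\begin{equation*}
    c_q(a)=\sum_{e\mid (a,q)}e\,\mu(q/e).
\end{equation*}

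Next I would evaluate this divisor sum. Both sides of the target identity are multiplicative in $q$ for fixed $a$: for the left side this follows from the Chinese remainder theorem, and for the right side from multiplicativity of $\mu$ and $\varphi$ together with $(a,q_1q_2)=(a,q_1)(a,q_2)$ when $(q_1,q_2)=1$. It therefore suffices to check prime powers $q=p^k$ with $(a,p^k)=p^j$ and $0\leq j\leq k$. In the formula $\sum_{i\leq j}p^i\mu(p^{k-i})$, only the terms with $k-i\in\{0,1\}$ survive. There are three cases:
\begin{itemize}
    \item If $j=k$, the sum is $p^k-p^{k-1}=\varphi(p^k)$.
    \item If $j=k-1$, the sum is $-p^{k-1}$.
    \item If $j<k-1$, the sum is $0$.
\end{itemize}
Compare with $\mu(p^{k-j})\varphi(p^k)/\varphi(p^{k-j})$. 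For $j=k$ this gives $\varphi(p^k)$. For $j=k-1$ it gives $-\varphi(p^k)/(p-1)=-p^{k-1}$. For $j<k-1$ the Möbius factor vanishes. So the two sides agree.

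The special case follows at once: when $(a,q)=1$ the formula reduces to $\mu(q)\varphi(q)/\varphi(q)=\mu(q)$. The only step needing care is the multiplicativity of $c_q(a)$ in $q$. I would justify it by writing $r=r_1q_2\bar q_2+r_2q_1\bar q_1$ via the Chinese remainder theorem, which factors $e(ra/q)$ into the two corresponding exponentials. Alternatively, one can avoid multiplicativity entirely by evaluating $\sum_{e\mid(a,q)}e\,\mu(q/e)$ directly: factor out $\mu(q/(a,q))$ using the fact that $\mu(q/e)$ vanishes unless $q/e$ is squarefree.
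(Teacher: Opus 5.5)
Your proof is correct. The paper gives no argument of its own (it only cites Apostol), and the textbook proof likewise first reduces to $c_q(a)=\sum_{d\mid(a,q)}d\,\mu(q/d)$. It then gets the closed form by specialising a general evaluation of $\sum_{d\mid(n,k)}f(d)g(k/d)$ (with $f$ completely multiplicative and $g=\mu h$) to $f(d)=d$, $g=\mu$, whereas your reduction to prime powers via multiplicativity in $q$ is an equally valid, more hands-on way to finish.
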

The next result allows us to handle the factor $\rho_b(a,q)$ from Theorem~\ref{thm: smooth zsiflaw legeis}.
\begin{lemma}\label{lemma: exponential sum}
    Let $b\geq 2$, $q\in\N$, $(a,q)=1$, and $\rho_b(r,q)$ be as defined in~\eqref{rhobdef}. Then,
    \begin{equation}
        \sum_{r\thinspace(q)}e(ra/q)\rho_b(r,q)=\mu(q)\mathbb{1}_{q\mid b^3-b},
    \end{equation}
    where
    \begin{equation}\label{bb1def}
        \mathbb{1}_{q\mid b^3-b}=\begin{cases}
            1 & \text{if } q\mid b^3-b,\\
            0 & \text{otherwise.}
        \end{cases}
    \end{equation}
\end{lemma}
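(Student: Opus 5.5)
Write $d=(q,b^3-b)$. The plan is to split off the condition in $\rho_b$ via M\"obius inversion over divisors of $d$ and then evaluate each piece as a Ramanujan sum. Since $\rho_b(r,q)=1$ exactly when $(r,d)=1$, we have
\begin{equation*}
    \rho_b(r,q)=\sum_{e\mid (r,d)}\mu(e).
\end{equation*}
Substituting and swapping the order of summation gives
\begin{equation*}
    \sum_{r\thinspace(q)}e(ra/q)\rho_b(r,q)=\sum_{e\mid d}\mu(e)\sum_{\substack{r\thinspace(q)\\ e\mid r}}e(ra/q).
\end{equation*}
This is well defined because $e\mid q$, so divisibility of $r$ by $e$ depends only on $r$ modulo $q$.

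For the inner sum, write $r=es$ with $s$ running modulo $q/e$. The sum becomes $\sum_{s\thinspace(q/e)}e(sa/(q/e))$. Since $(a,q)=1$, this equals $q/e$ if $q/e=1$, that is $e=q$, and $0$ otherwise. Only the term $e=q$ survives, and it is present precisely when $q\mid d$, equivalently $q\mid b^3-b$. Its contribution is $\mu(q)\cdot 1$, which is the claimed $\mu(q)\mathbb{1}_{q\mid b^3-b}$.

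An alternative route, if preferred, is to factor $q=q_1q_2$ with $q_1$ supported on primes dividing $d$ and $(q_2,d)=1$. One then applies the Chinese remainder theorem and Lemma~\ref{ramlem}, noting that the sum over the $q_2$ part vanishes unless $q_2=1$. However, the direct computation above avoids this factorisation and needs no result beyond orthogonality of additive characters.

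The only point needing care, and the main place an error could slip in, is justifying that $\rho_b(r,q)$ depends on $r$ modulo $q$ only through $(r,d)$. This holds because $(r,q,b^3-b)=(r,d)$. With that in hand, the argument is a short routine calculation.
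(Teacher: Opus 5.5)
Your proof is correct, and it takes a different and shorter route than the paper. The paper splits into three cases:
\begin{itemize}
\item when $(q,b^3-b)=1$ the sum is a complete character sum, equal to $1$ or $0$;
\item when $q\mid b^3-b$ the condition reduces to $(r,q)=1$, and the sum is the Ramanujan sum $c_q(a)=\mu(q)$ from Lemma~\ref{ramlem};
\item in the remaining case it subtracts off the residues with $(r,q,b^3-b)=d_1>1$, rescales to a sum of the same shape modulo $q/d_1\neq 1$ (with $b^3-b$ replaced by $(b^3-b)/d_1$), and iterates this $\Omega((q,b^3-b))$ times. At that point the coprimality condition is vacuous, and a complete sum modulo some $q_k\neq 1$ vanishes.
\end{itemize}
Your identity $\rho_b(r,q)=\sum_{e\mid (r,d)}\mu(e)$ is exactly that iterated inclusion--exclusion collapsed into one step. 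As a result, all three cases are handled at once by orthogonality of additive characters, and you do not need Lemma~\ref{ramlem} at all. When $d=q$, your computation is just the standard M\"obius proof of $c_q(a)=\mu(q)$ for $(a,q)=1$. Your approach also makes the answer transparent: the only divisor $e\mid d$ that can survive is $e=q$, which forces $q\mid b^3-b$. The paper's version has the mild advantage of quoting a familiar result for the main nonvanishing case, but its third case needs noticeably more bookkeeping than your one-line inversion.
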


\begin{proof}
    In what follows, let
    \begin{equation*}
        f_b(q):=\sum_{r\thinspace(q)}e(ra/q)\rho_b(r,q)=\sum_{\substack{r\thinspace(q)\\(r,q,b^3-b)=1}}e(ra/q)
    \end{equation*}
    for convenience. We prove the lemma by considering three different cases.\\
    \\
    \textbf{Case 1:} $(q,b^3-b)=1$.\\
    In this case, we always have $\rho_b(r,q)=1$ so that
    \begin{equation*}
        f_b(q)=\sum_{r\thinspace (q)}e(ra/q)=
        \begin{cases}
            1,&\text{if $q=1$},\\
            0,&\text{otherwise.}
        \end{cases}
    \end{equation*}
    \textbf{Case 2:} $(q,b^3-b)>1$ and $q\mid b^3-b$.\\
    In this case, we always have $(q,b^3-b)=q$. Thus,
    \begin{equation*}
        f_b(q)=\sum_{\substack{r\thinspace(q)\\(r,q,b^3-b)=1}}e(ra/q)=\sum_{\substack{r\thinspace(q)\\(r,q)=1}}e(ra/q)=\mu(q),
    \end{equation*}
    where in the last equality we applied Lemma~\ref{ramlem}.\\
    \\
    \textbf{Case 3:} $(q,b^3-b)>1$ and $q\nmid b^3-b$.\\
    We begin by writing
    \begin{equation*}
        f_b(q)=\sum_{\substack{r\thinspace(q)\\(r,q,b^3-b)=1}}e(ra/q)=\sum_{r\thinspace(q)}e(ra/q)-\sum_{\substack{r\thinspace(q)\\(r,q,b^3-b)>1}}e(ra/q)=-\sum_{\substack{r\thinspace(q)\\(r,q,b^3-b)>1}}e(ra/q).
    \end{equation*}
    Splitting this sum by the value of $(r,q,b^3-b)$ gives
    \begin{equation}\label{case3eq1}
        f_b(q)=-\sum_{\substack{r\thinspace(q)\\(r,q,b^3-b)>1}}e(ra/q)=-\sum_{\substack{d\mid (q,b^3-b)\\d\neq 1}}\sum_{\substack{r\thinspace(q)\\(r,q,b^3-b)=d}}e(ra/q).
    \end{equation}
    Consider the inner sum of \eqref{case3eq1} for some fixed $d=d_1$. Writing $r_1=r/d$ and $q_1=q/d$ gives
    \begin{equation*}
        \sum_{\substack{r\thinspace(q)\\(r,q,b^3-b)=d}}e(ra/q)=\sum_{\substack{r_1\thinspace(q_1)\\\left(r_1,q_1,\frac{b^3-b}{d_1}\right)=1}}e(r_1a/q_1).
    \end{equation*}
    Here, it is vital to note that since $q\nmid b^3-b$, one has $q_1\neq 1$. In particular, $\sum_{r_1\thinspace(q_1)}e(r_1a/q_1)=0$. Thus, by an analogous procedure to before,
    \begin{align*}
        \sum_{\substack{r_1\thinspace(q_1)\\\left(r_1,q_1,\frac{b^3-b}{d_1}\right)=1}}e(r_1a/q_1)&=-\sum_{\substack{r_1\thinspace(q_1)\\\left(r_1,q_1,\frac{b^3-b}{d_1}\right)>1}}e(r_1a/q_1)\\
        &=-\sum_{\substack{d_1d_2\mid \left(q,b^3-b\right)\\d_2\neq 1}}\sum_{\substack{r_2\thinspace(q_2)\\\left(r_2,q_2,\frac{b^3-b}{d_1d_2}\right)=1}}e(r_2a/q_2),
    \end{align*}
    with $q_2=q/d_1d_2\neq 0$. Putting everything together,
    \begin{equation*}
        f_b(q)=\sum_{\substack{d_1,d_2\\d_1d_2\mid \left(q,b^3-b\right)\\d_2\neq 1}}\sum_{\substack{r_2\thinspace(q_2)\\\left(r_2,q_2,\frac{b^3-b}{d_1d_2}\right)=1}}e(r_2a/q_2).
    \end{equation*}
    Repeating the same procedure again then gives
    \begin{equation*}
        f_b(q)=-\sum_{\substack{d_1,d_2,d_3\\d_1d_2d_3\mid \left(q,b^3-b\right)\\d_1,d_2,d_3\neq 1}}\sum_{\substack{r_3\thinspace(q_3)\\\left(r_3,q_3,\frac{b^3-b}{d_1d_2d_3}\right)=1}}e(r_3a/q_3)
    \end{equation*}
    and so forth. In particular, if we let
    \begin{equation}\label{kdef}
        k=\Omega((q,b^3-b))
    \end{equation} 
    be the total number of prime factors of $(q,b^3-b)$, then
    \begin{equation}\label{case3eq2}
        f_b(q)=(-1)^k\sum_{\substack{d_1,\ldots,d_k\\d_1\cdots d_k\mid \left(q,b^3-b\right)\\d_1,\ldots,d_k\neq 1}}\sum_{\substack{r_k\thinspace(q_k)\\\left(r_k,q_k,\frac{b^3-b}{d_1\cdots d_k}\right)=1}}e(r_ka/q_k),
    \end{equation}
    where $q_k=q/(d_1\cdots d_k)\neq 1$. By the choice \eqref{kdef} of $k$, we have
    \begin{equation*}
        \left(q_k,\frac{b^3-b}{d_1\cdots d_k}\right)=\left(\frac{q}{d_1\cdots d_k},\frac{b^3-b}{d_1\cdots d_k}\right)=1.
    \end{equation*}
    As a result, the inner sum in \eqref{case3eq2} vanishes and $f_b(q)=0$ as required.
\end{proof}

We now give an identity related to the \emph{singular series} $\fS_3(N)$ defined in~\eqref{sing1eq}.

\begin{lemma}\label{singlem}
Let $c_q(N)$ be as in Lemma~\ref{ramlem}. Then, for all $b\geq 2$ and $N\in\mathbb{Z}$,
\begin{equation*}
    \sum_{\substack{q=1\\q\mid b^3-b}}^\infty\frac{\mu(q)}{\varphi(q)^3}c_q(N)=\prod_{\substack{p\mid b^3-b\\ p\mid N}}\left(1-\frac{1}{(p-1)^2}\right)\prod_{\substack{p\mid b^3-b\\ p\nmid N}}\left(1+\frac{1}{(p-1)^3}\right)=:\fS_3(N).
\end{equation*}
\end{lemma}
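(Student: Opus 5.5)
The sum on the left is finite, because $\mu(q)$ restricts it to square-free $q$ and the condition $q\mid b^3-b$ leaves only the square-free divisors of $b^3-b$. The plan is to show the summand is multiplicative in $q$ and then factor the sum as an Euler product over the primes $p\mid b^3-b$.

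\textbf{Step 1: multiplicativity.} The function $q\mapsto \mu(q)/\varphi(q)^3$ is multiplicative. The Ramanujan sum $c_q(N)$ is also multiplicative in $q$ for fixed $N$. This follows from the Chinese remainder theorem: for $(q_1,q_2)=1$, write $r=r_1q_2+r_2q_1$, so that $e(rN/(q_1q_2))=e(r_1N/q_1)e(r_2N/q_2)$. Alternatively, it can be read off from the closed form in Lemma~\ref{ramlem}. Hence the summand $g(q):=\mu(q)c_q(N)/\varphi(q)^3$ is multiplicative.

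\textbf{Step 2: factorisation.} Since $\mu$ kills non-square-free $q$,
\begin{equation*}
\sum_{q\mid b^3-b} g(q)=\prod_{p\mid b^3-b}\bigl(1+g(p)\bigr).
\end{equation*}
Indeed, expanding the product gives exactly the sum of $g$ over the square-free divisors of $b^3-b$.

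\textbf{Step 3: local factors.} For a prime $p$, Lemma~\ref{ramlem} (or a direct computation) gives $c_p(N)=p-1$ if $p\mid N$ and $c_p(N)=-1$ if $p\nmid N$.
\begin{itemize}
\item If $p\mid N$, then $1+g(p)=1-(p-1)/(p-1)^3=1-1/(p-1)^2$.
\item If $p\nmid N$, then $1+g(p)=1+1/(p-1)^3$.
\end{itemize}
Splitting the product over $p\mid b^3-b$ according to whether $p\mid N$ gives exactly $\fS_3(N)$.

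The case $N=0$ is allowed, since $c_q(0)=\varphi(q)$ and every $p$ divides $0$, so it is covered by the first bullet. The only step that needs any care is the multiplicativity of $c_q(N)$, and that is standard.
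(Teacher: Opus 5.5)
Your proposal is correct and follows the same route as the paper: factor the finite sum as an Euler product over the primes $p\mid b^3-b$, then evaluate each local factor $1-c_p(N)/(p-1)^3$ using $c_p(N)=p-1$ or $-1$. Your explicit CRT justification of the multiplicativity of $c_q(N)$, which the paper leaves implicit, and your remark on the case $N=0$ are welcome additions.
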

\begin{proof}
    By converting to an Euler product expansion,
    \begin{equation}\label{singeulereq}
        \sum_{\substack{q=1\\q\mid b^3-b}}^\infty\frac{\mu(q)}{\varphi(q)^3}c_q(N)=\prod_{p\mid b^3-b}\left(1-\frac{c_p(N)}{(p-1)^3}\right).
    \end{equation}
    Now, by Lemma~\ref{ramlem} we have that for any prime $p$
    \begin{equation*}
        c_p(N)=
        \begin{cases}
            p-1,&\text{if $p\mid N$},\\
            -1,&\text{otherwise.}
        \end{cases}
    \end{equation*}
    Thus, by splitting the product~\eqref{singeulereq} into the cases where $p\mid N$ and $p\nmid N$, we obtain the desired result.
\end{proof}

Next we give two variants of~\emph{Weyl's inequality}, which will be used to extract the factor $\cS_{1,2}(N)$ defined in~\eqref{S12def}.

\begin{lemma}[{See e.g.\ \cite[Lemma~8.2]{murty2023introduction}}]\label{Weyllem}
    For $\beta\in\R$ and $N_1<N_2$,
    \begin{equation*}
        \left|\sum_{m=N_1+1}^{N_2}e(m\beta)\right|\ll\frac{1}{||\beta||}.
    \end{equation*}
\end{lemma}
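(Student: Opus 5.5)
The plan is to prove the bound by evaluating the geometric series explicitly and then estimating its modulus. If $\beta\in\Z$, then $\|\beta\|=0$, the right-hand side is interpreted as $+\infty$, and there is nothing to prove. So assume $\beta\notin\Z$, which gives $e(\beta)\neq 1$. Summing the geometric progression gives
\begin{equation*}
    \sum_{m=N_1+1}^{N_2}e(m\beta)=e((N_1+1)\beta)\,\frac{e((N_2-N_1)\beta)-1}{e(\beta)-1}.
\end{equation*}
The prefactor $e((N_1+1)\beta)$ has modulus one. The numerator has modulus at most $2$ by the triangle inequality. It therefore remains to bound $|e(\beta)-1|$ from below.

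For this we use the identity
\begin{equation*}
    |e(\beta)-1|=|e(\beta/2)-e(-\beta/2)|=2|\sin(\pi\beta)|.
\end{equation*}
Since $|\sin(\pi\beta)|$ is $1$-periodic and even in $\beta$, it equals $\sin(\pi\|\beta\|)$ with $\|\beta\|\in[0,1/2]$. On $[0,\pi/2]$, concavity of $\sin$ gives the standard inequality $\sin x\geq 2x/\pi$. Hence $|\sin(\pi\beta)|\geq 2\|\beta\|$, and so $|e(\beta)-1|\geq 4\|\beta\|$.

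Combining the two estimates, the sum has modulus at most $2/(4\|\beta\|)=1/(2\|\beta\|)$, which is the claimed bound with an absolute implied constant. The only step that takes any care is the lower bound for $|e(\beta)-1|$, that is, reducing to $\|\beta\|$ via periodicity and then applying $\sin x\geq 2x/\pi$. Everything else is routine. The bound is uniform in $N_1$ and $N_2$, which is what the circle method application needs.
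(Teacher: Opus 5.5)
Your proof is correct. Summing the geometric series and using $|e(\beta)-1|=2|\sin(\pi\beta)|\geq 4||\beta||$ gives the bound $1/(2||\beta||)$ uniformly in $N_1,N_2$. The paper gives no proof of its own, only the citation \cite[Lemma~8.2]{murty2023introduction}, and your argument is the standard one behind that reference.
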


\begin{lemma}\label{Weylrevlem}
    For $\beta\in\R$ and any fixed $b\geq 2$,
    \begin{equation*}
        \left|\sum_{\substack{m\leq N\\(\rev{m},b)=1}}e(m\beta)\right|\ll_b\frac{\log N}{||\beta||}
    \end{equation*}
\end{lemma}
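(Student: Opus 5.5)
The set of $m\leq N$ with $(\rev{m},b)=1$ is a disjoint union of $O_b(\log N)$ intervals of consecutive integers. We will decompose it into those intervals and apply Lemma~\ref{Weyllem} to each piece.

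The key observation is that $(\rev{m},b)=1$ depends only on the leading base-$b$ digit of $m$. Indeed, if $m\in\cB_L$ then $\rev{m}\equiv\varepsilon_{L-1}(m)\pmod b$, so $(\rev{m},b)=(\varepsilon_{L-1}(m),b)$.

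For each length $L$ with $b^{L-1}\leq N$, and each leading digit $1\leq j<b$ with $(j,b)=1$, the integers $m\in\cB_L$ with leading digit $j$ form the interval $[jb^{L-1},(j+1)b^{L-1})$. Intersecting with $[1,N]$, we obtain the decomposition
\begin{equation*}
    \{m\leq N:(\rev{m},b)=1\}=\bigcup_{\substack{1\leq L\leq \frac{\log N}{\log b}+1}}\ \bigcup_{\substack{1\leq j<b\\(j,b)=1}}\Big([jb^{L-1},(j+1)b^{L-1})\cap[1,N]\Big).
\end{equation*}
Each piece is either empty or a block of consecutive integers $\{N_1+1,\ldots,N_2\}$. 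The number of pieces is at most $(b-1)\big(\frac{\log N}{\log b}+1\big)\ll_b\log N$.

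By Lemma~\ref{Weyllem}, the exponential sum over each nonempty block is $\ll 1/\|\beta\|$. By the triangle inequality, the full sum is therefore $\ll_b(\log N)/\|\beta\|$, as claimed.

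There is no real obstacle; only two minor points need care.
\begin{itemize}
    \item The last length $L$ may be truncated by $N$. This is harmless, since intersecting an interval with $[1,N]$ still gives an interval.
    \item When $\beta\in\Z$, we have $\|\beta\|=0$ and the bound is trivially true if read as $+\infty$.
\end{itemize}
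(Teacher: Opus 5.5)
Your proof is correct and follows the paper's argument: you split $\{m\le N:(\rev{m},b)=1\}$ by digit length and leading digit coprime to $b$ into $O_b(\log N)$ blocks of consecutive integers, apply Lemma~\ref{Weyllem} to each block, and combine them with the triangle inequality. You intersect each block with $[1,N]$, which handles the final, truncated digit length a little more carefully than the paper does: its last sum runs over the full block $[Rb^{L-1},(R+1)b^{L-1})$, which extends past $N$.
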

\begin{proof}
    Let $L$ be the number of digits of $N$ in base $b$ and $R$ be the unique integer such that
    \begin{equation*}
        Rb^{L-1}\leq N<(R+1)b^{L-1}.
    \end{equation*}
    Then, by the triangle inequality and Lemma~\ref{Weyllem},
    \begin{align*}
        \left|\sum_{\substack{m\leq N\\(\rev{m},b)=1}}e(m\beta)\right|&\leq\sum_{\ell=0}^{L-2}\sum_{\substack{(r,b)=1\\0\leq r<b}}\left|\sum_{rb^{\ell}\leq m<(r+1)b^{\ell}}e(m\beta)\right|+\sum_{\substack{r\leq R\\(r,b)=1}}\left|\sum_{rb^{L-1}\leq m<(r+1)b^{L-1}}e(m\beta)\right|\\
        &\leq \sum_{\ell=0}^{L-2}\sum_{\substack{(r,b)=1\\0\leq r<b}}\frac{1}{||\beta||}+\sum_{\substack{r\leq R\\(r,b)=1}}\frac{1}{||\beta||}\\
        &\ll_b\frac{L}{||\beta||},
    \end{align*}
    from which the desired bound follows, since $L\asymp_b\log N$.
\end{proof}

The final bound we require is Vinogradov's classical minor arc estimate for exponential sums over primes.

\begin{lemma}[{See e.g.\ \cite[Theorem~8.16]{murty2023introduction}}]\label{vinlem}
    Suppose $\alpha$ is a real number satisfying
    \begin{equation*}
        \left|\alpha-\frac{a}{q}\right|\leq\frac{1}{q^2}
    \end{equation*}
    for some rational $a/q$ where $1\leq q\leq N$ and $(a,q)=1$. Then,
    \begin{equation*}
        S(\alpha):=\sum_{p\leq N}e(p\alpha)\log p\ll\left(\frac{N}{\sqrt{q}}+N^{4/5}+N^{1/2}q^{1/2}\right)(\log N)^4.
    \end{equation*}
\end{lemma}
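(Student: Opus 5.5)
The plan is the standard Vaughan-identity argument. First replace $\log p$ over primes by $\Lambda(n)$ over all $n\le N$; by \eqref{pkbound} this costs only $O(\sqrt N(\log N)^2)$, which is absorbed by the $N^{4/5}$ term. So it suffices to bound $\sum_{n\le N}\Lambda(n)e(n\alpha)$. Fix parameters $U=V=N^{2/5}$ and apply Vaughan's identity. This writes $\Lambda(n)$, for $n>U$, as a sum of pieces of two kinds:
\begin{itemize}
\item a \emph{Type I} piece $\sum_{m\le UV}a_m\sum_{k\le N/m}e(mk\alpha)$, where $|a_m|\ll \log N$ or the inner sum is weighted by $\log k$;
\item a \emph{Type II} bilinear piece $\sum_{U<m\le N/V}\sum_{V<k\le N/m}a_m b_k\, e(mk\alpha)$, where $|a_m|\le \tau(m)$ and $b_k=\Lambda(k)$.
\end{itemize}
The contribution from $n\le U$ is trivially $O(U)$.

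For the Type I sums, sum the inner geometric progression over $k$ using Lemma~\ref{Weyllem}. The $\log k$ weight is handled by partial summation. This gives a bound of the form $(\log N)^2\sum_{m\le UV}\min\bigl(N/m,\,1/\|m\alpha\|\bigr)$. We then need the classical counting lemma: if $|\alpha-a/q|\le q^{-2}$, then
\begin{equation*}
\sum_{m\le M}\min\Bigl(\frac{N}{m},\frac{1}{\|m\alpha\|}\Bigr)\ll\Bigl(\frac{N}{q}+M+q\Bigr)\log(2qN).
\end{equation*}
This is proved by splitting $m$ into blocks of length $q$. Within a block, the points $m\alpha$ are spaced at least roughly $1/(2q)$ apart modulo $1$, so the values $1/\|m\alpha\|$ can be compared with a harmonic sum. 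Applying the lemma with $M=UV=N^{4/5}$ gives $(N/q+N^{4/5}+q)(\log N)^3$.

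For the Type II sums, split $m$ dyadically into ranges $M<m\le 2M$ and apply Cauchy--Schwarz in $m$. Expanding the square and swapping the order of summation gives
\begin{equation*}
\Bigl(\sum_m|a_m|^2\Bigr)^{1/2}\Bigl(\sum_{k,k'}|b_kb_{k'}|\,\min\Bigl(M,\frac{1}{\|(k-k')\alpha\|}\Bigr)\Bigr)^{1/2}.
\end{equation*}
The first factor is $\ll M^{1/2}(\log N)^{3/2}$. For the second, group by $h=k-k'$ and apply the same counting lemma with range $N/M$ in $h$. This yields a bound for each dyadic block of
\begin{equation*}
\Bigl(\frac{N}{\sqrt q}+\frac{N}{\sqrt{U}}+\frac{N}{\sqrt V}+\sqrt{Nq}\Bigr)(\log N)^{3}.
\end{equation*}
Summing over the $O(\log N)$ dyadic blocks adds one more logarithm, giving $(\log N)^4$. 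With $U=V=N^{2/5}$ we have $N/\sqrt U=N^{4/5}$, so the total matches the claimed bound.

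The main obstacle is the Type II estimate. It requires keeping the bilinear structure intact under Cauchy--Schwarz, and handling the diagonal terms $h=0$ correctly: these contribute $M\cdot N/M$ inside the square root, and are what produce the $N/\sqrt U$ and $N/\sqrt V$ terms. The counting lemma must also be applied over a range of $h$ that may be smaller than $q$, where the estimate degrades. Everything else is routine bookkeeping of logarithmic factors.
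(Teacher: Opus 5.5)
Your proposal is correct. It is the standard Vaughan-identity proof with $U=V=N^{2/5}$ (as in Davenport, Chapter~25), which is the argument behind the textbook result the paper cites without giving a proof of its own.

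Two cosmetic corrections. First, the diagonal $h=0$ only produces the term $\sqrt{NM}\le N/\sqrt{V}$; the $N/\sqrt{U}$ term comes from the length-$N/M$ term of the counting lemma. Second, the $1/(2q)$-separation of the points $m\alpha$ is only guaranteed within half-blocks of length $q/2$, so the block argument should be run on half-blocks.
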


\subsection{The major arcs}\label{majorsect}
We now deal with the major arc component of~\eqref{R12inteq}. That is,
\begin{equation*}
    \int_{\fM}S(\alpha)\rev{S}_b^2(\alpha)e(-N\alpha)\mathrm{d}\alpha.
\end{equation*}
To do so, we first obtain asymptotic expressions for $S(\alpha)$ and $\rev{S}_b(\alpha)$ (as $N\to\infty$) when $\alpha$ lies on a major arc $\fM(a,q)$. In what follows, we let
\begin{align*}
    v(\beta):=\sum_{n\leq N}e(n\beta),\qquad \rev{v}_b(\beta):=\sum_{\substack{n\leq N\\(\rev{n},b)=1}}e(n\beta)
\end{align*}
and
\begin{equation*}
    \E(q):=\frac{\mu(q)}{\varphi(q)},\qquad\ \rev{\E}_b(q):=\frac{\mu(q)}{\varphi(q)}\mathbb{1}_{q\mid b^3-b}
\end{equation*}
where $\mathbb{1}_{q\mid b^3-b}$ is as in~\eqref{bb1def}. With this notation, one has the following well-known expression for $S(\alpha)$.
\begin{lemma}[{See e.g.~\cite[Theorem~10.12]{murty2023introduction}}]\label{salphlem}
    Let $\alpha\in\fM(a,q)$ and $\beta=\alpha-a/q$. Then for any $C>0$, 
    \begin{equation*}
        S(\alpha)=\E(q)v(\beta)+O_{B,C}\left(\frac{N}{(\log N)^C}\right),
    \end{equation*}
    with $B\geq 1$ as in the definition (\eqref{Qdef} and~\eqref{Maqdef}) of $\fM(a,q)$. 
\end{lemma}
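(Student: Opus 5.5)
The plan is the classical Siegel--Walfisz major arc argument. Write $\alpha=a/q+\beta$ with $|\beta|\leq Q/N$ and $q\leq Q=(\log N)^B$. First discard the primes $p\mid q$; these contribute at most $\omega(q)\log N\ll(\log N)^2$. For the remaining primes, split by residue class $r$ modulo $q$ with $(r,q)=1$:
\begin{equation*}
    S(\alpha)=\sum_{\substack{r\thinspace(q)\\(r,q)=1}}e\left(\frac{ra}{q}\right)\sum_{\substack{p\leq N\\p\equiv r\thinspace(q)}}e(p\beta)\log p+O\big((\log N)^2\big).
\end{equation*}

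Next, handle the inner sum by partial summation against $\vartheta(t;r,q):=\sum_{p\leq t,\,p\equiv r\,(q)}\log p$. This gives
\begin{equation*}
    \sum_{\substack{p\leq N\\p\equiv r\thinspace(q)}}e(p\beta)\log p=e(N\beta)\vartheta(N;r,q)-2\pi i\beta\int_2^N\vartheta(t;r,q)e(t\beta)\,\mathrm{d}t.
\end{equation*}
Insert the Siegel--Walfisz theorem, $\vartheta(t;r,q)=t/\varphi(q)+O_{B,D}(N(\log N)^{-D})$, which holds uniformly for $q\leq(\log N)^B$ and $t\leq N$ (trivially so for small $t$). Since $|\beta|N\leq Q$, the error contributes $O(Q\,N(\log N)^{-D})$. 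The main term reassembles, by the same partial summation run in reverse, into $\frac{1}{\varphi(q)}\sum_{n\leq N}e(n\beta)+O(1+|\beta|N)=\frac{1}{\varphi(q)}v(\beta)+O(Q)$.

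Then sum over $r$. Lemma~\ref{ramlem} gives $\sum_{(r,q)=1}e(ra/q)=c_q(a)=\mu(q)$ because $(a,q)=1$, so the main terms combine to $\E(q)v(\beta)$. The accumulated error is at most $\varphi(q)\cdot Q\,N(\log N)^{-D}\ll N(\log N)^{2B-D}$. Choosing $D=C+2B$ gives the claimed bound $O_{B,C}(N/(\log N)^C)$.

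The only real input is the uniformity of Siegel--Walfisz in the range $q\leq(\log N)^B$, which is exactly why the constant is ineffective. The rest is bookkeeping: making sure the factor $|\beta|N\leq Q$ coming from the partial summation integral is absorbed by taking $D$ large.
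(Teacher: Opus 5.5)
Your proof is correct and is the standard Siegel--Walfisz major-arc argument. The paper gives no proof of this lemma (it cites Murty), but its proof of the analogue Lemma~\ref{revsalphalem} differs from yours only in order: it first evaluates the sum at $\beta=0$ via residue classes and the Ramanujan sum, then does a single partial summation in $\beta$. The bookkeeping is the same, with $D=2B+C$ absorbing the losses from $q\le Q$ and $|\beta|N\le Q$.
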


Using Lemma~\ref{lemma: exponential sum} and~Theorem~\ref{thm: smooth zsiflaw legeis}, we then obtain the following analogous expression for $\rev{S}_b(\alpha)$.

\begin{lemma}\label{revsalphalem}
    Let $\alpha\in\fM(a,q)$ and $\beta=\alpha-a/q$. Then for any $C>0$,
    \begin{equation}\label{revSexp}
        \rev{S}_b(\alpha)=\rev{\E}_b(q)\rev{v}_b(\beta)+O_{b,B,C}\left(\frac{N}{(\log N)^C}\right),
    \end{equation}
    with $B\geq 1$ as in the definition (\eqref{Qdef} and~\eqref{Maqdef}) of $\fM(a,q)$.
\end{lemma}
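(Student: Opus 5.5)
We mirror the classical proof of Lemma~\ref{salphlem}, replacing the Siegel--Walfisz theorem by the refined Zsiflaw--Legeis theorem (Theorem~\ref{thm: smooth zsiflaw legeis}). Write $\alpha=a/q+\beta$ with $q\leq Q=(\log N)^B$ and $|\beta|\leq Q/N$.

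\textbf{Step 1: split by residue class.} Split $\rev{S}_b(\alpha)$ according to $\rev{p}\equiv r\pmod q$:
\begin{equation*}
    \rev{S}_b(\alpha)=\sum_{r\thinspace(q)}e(ra/q)\sum_{\substack{\rev{p}\leq N\\ \rev{p}\equiv r\thinspace(q)\\(\rev{p},b^3-b)=1}}e(\rev{p}\beta)\log p .
\end{equation*}

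\textbf{Step 2: partial summation in each class.} Define $\rev{\vartheta}^*(x;r,q)$ as in~\eqref{thetastardef2}. Then the inner sum equals
\begin{equation*}
    \int_{1}^{N}e(x\beta)\,\mathrm{d}\rev{\vartheta}^*(x;r,q).
\end{equation*}
Replace $\rev{\vartheta}^*(x;r,q)$ by the main term
\begin{equation*}
    M_r(x)=\frac{(q,b^3-b)}{\varphi((q,b^3-b))}\frac{\rho_b(r,q)}{q}\#\fB(x)
\end{equation*}
plus an error $E_r(x)\ll x/(\log x)^{C'}$. This is allowed since $q\leq(\log N)^B\leq\exp(c\sqrt{L})$. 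Partial summation bounds the error contribution by
\begin{equation*}
    \ll |E_r(N)|+|\beta|\int_1^N|E_r(x)|\,\mathrm{d}x\ll (1+Q)\frac{N}{(\log N)^{C'}}.
\end{equation*}
For small $x$ (say $x\leq\sqrt N$) we use the trivial bound instead of the asymptotic. Undoing the partial summation, the main-term contribution of $M_r$ is exactly
\begin{equation*}
    \frac{(q,b^3-b)}{\varphi((q,b^3-b))}\frac{\rho_b(r,q)}{q}\,\rev{v}_b(\beta),
\end{equation*}
because $\#\fB(x)$ is the counting function of the integers $n$ counted in $\rev{v}_b$.

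\textbf{Step 3: sum over residues.} Summing over the $q$ residues $r$ costs a factor $q\leq Q$ in the error. Choosing $C'=C+2B+1$ gives total error $O_{b,B,C}(N/(\log N)^C)$. The main term becomes
\begin{equation*}
    \frac{(q,b^3-b)}{q\,\varphi((q,b^3-b))}\,\rev{v}_b(\beta)\sum_{r\thinspace(q)}e(ra/q)\rho_b(r,q).
\end{equation*}
By Lemma~\ref{lemma: exponential sum} the sum over $r$ equals $\mu(q)\mathbb{1}_{q\mid b^3-b}$. When $q\mid b^3-b$ we have $(q,b^3-b)=q$, so the prefactor simplifies to $1/\varphi(q)$. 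The main term is therefore $\rev{\E}_b(q)\rev{v}_b(\beta)$, as claimed in~\eqref{revSexp}.

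\textbf{Main obstacle.} The delicate part is the partial summation in Step~2, namely confirming that Theorem~\ref{thm: smooth zsiflaw legeis} is uniform for all $x\in[\sqrt N,N]$. Its range condition $q\leq\exp(c\sqrt L)$ depends on $L\asymp\log x$, and $(\log N)^B$ is still far below $\exp(c\sqrt{\log x/\log b})$ there, so the condition holds throughout. The small-$x$ range contributes only $O(\sqrt N\log N)$. We also need the implied constant to be uniform in $r$ and $q$, which the statement of Theorem~\ref{thm: smooth zsiflaw legeis} provides.
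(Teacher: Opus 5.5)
Your proposal is correct and is essentially the paper's argument, just reordered. The paper first sums over residue classes at $\beta=0$, using Theorem~\ref{thm: smooth zsiflaw legeis} and Lemma~\ref{lemma: exponential sum}, to get $\rev{S}_b(x,a/q)=\rev{\E}_b(q)\#\fB(x)+O(x/(\log x)^{B+C})$, and then does a single partial summation; you instead do partial summation class by class and sum over $r$ afterwards. Your explicit trivial bound for $x\leq\sqrt{N}$ is a small improvement: there the range condition $q\leq\exp(c\sqrt{L})$ can fail for $q$ near $(\log N)^B$, which the paper passes over by writing $Q=(\log x)^B$.
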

\begin{proof}
    First we prove that
    \begin{equation}\label{Sxaqdef}
        \rev{S}_b(x,a/q):=\sum_{\substack{\rev{p}\leq x\\(\rev{p},b^3-b)=1}}e(\rev{p}a/q)\log p
    \end{equation}
    satisfies
    \begin{equation}\label{Sxaqeq}
        \rev{S}_b(x,a/q)=\rev{\E}_b(q)\#\fB(x)+O_{b,B,C}\left(\frac{x}{(\log x)^{B+C}}\right).
    \end{equation}
    To see why~\eqref{Sxaqeq} holds, we begin by splitting the sum in~\eqref{Sxaqdef} based on the residue class of $\rev{p}$ mod $q$:
    \begin{align*}
        \rev{S}_b(x,a/q)=\sum_{r\thinspace(q)}\sum_{\substack{\rev{p}\leq x\\ \rev{p}\equiv r\thinspace(q)\\(\rev{p},b^3-b)=1}}e(\rev{p}a/q)\log p=\sum_{r\thinspace(q)}e(ra/q)\sum_{\substack{\rev{p}\leq x\\ \rev{p}\equiv r\thinspace(q)\\(\rev{p},b^3-b)=1}}\log p.
    \end{align*}
    Applying Theorem~\ref{thm: smooth zsiflaw legeis} with $A=2B+C$ then gives
    \begin{equation*}
        \rev{S}_b(x,a/q)=\frac{(q,b^3-b)}{\varphi((q,b^3-b))} \frac{\#\fB(x)}{q} \sum_{r(q)}e(ra/q)\rho_b(a,q)+ O_{b,B,C}\bigg(\frac{qx}{(\log  x)^{2B+C}}\bigg).
    \end{equation*}
    From here, we then apply Lemma~\ref{lemma: exponential sum} and the fact that $q\leq Q=(\log x)^B$ to yield
    \begin{align*}
        \rev{S}_b(x,a/q)&=\left(\frac{(q,b^3-b)}{\varphi((q,b^3-b))} \frac{\mu(q)}{q}\mathbb{1}_{q\mid b^3-b}\right)\#\fB(x)+ O_{b,B,C}\bigg(\frac{x}{(\log  x)^{B+C}}\bigg)\\
        &=\rev{\E}_b(q)\#\fB(x)+O_{b,B,C}\left(\frac{x}{(\log x)^{B+C}}\right)
    \end{align*}
    as claimed. We now prove~\eqref{revSexp}. In what follows, let
    \begin{equation*}
        \rev{\Lambda}'_b(n)=
        \begin{cases}
            \log p,&\text{if $\rev{n}=p$ is prime and $(n,b^3-b)=1$},\\
            0,&\text{otherwise.}
        \end{cases}
    \end{equation*}
    Then,
    \begin{align*}
        \rev{S}_b(\alpha)=\sum_{\substack{n\leq x}}e(n\alpha)\rev{\Lambda}_b'(n)=\sum_{\substack{n\leq x\\(\rev{n},b)=1}}e(n\alpha)\rev{\Lambda}_b'(n)+O_b(1),
    \end{align*}
    where we have used that there are $O_b(1)$ primes $p$ with $(p,b)>1$. Consequently,
    \begin{align*}
        \rev{S}_b(\alpha)-\rev{\E}_b(q)\rev{v}_b(\beta)&=\sum_{\substack{n\leq x\\(\rev{n},b)=1}}e(n\alpha)\rev{\Lambda}_b'(n)-\rev{\E}_b(q)\sum_{\substack{n\leq x\\(\rev{n},b)=1}}e(n\beta)+O_b(1)\\
        &=\sum_{\substack{n\leq x\\(\rev{n},b)=1}}\left(e(na/q)\rev{\Lambda}_b'(n)-\rev{\E}_b(q)\right)e(n\beta)+O_b(1).
    \end{align*}
    Now, by~\eqref{Sxaqdef}, 
    \begin{equation*}
        A(x):=\sum_{\substack{n\leq x\\(\rev{n},b)=1}}\left(e(na/q)\rev{\Lambda}_b'(n)-\rev{\E}_b(q)\right)=O_{b,B,C}\left(\frac{x}{(\log x)^{B+C}}\right).
    \end{equation*}
    Therefore, since $|\beta|\leq Q/N$ (by~\eqref{Maqdef}), partial summation gives
    \begin{align*}
        \rev{S}_b(\alpha)-\rev{\E}_b(q)\rev{v}_b(\beta)&=A(N)e(\beta N)-\beta\int_1^NA(x)e(\beta x)\mathrm{d}x\\
        &\ll |A(N)|+|\beta|N\max_{1\leq x\leq N}|A(x)|\\
        &\ll_{b,B,C}\frac{QN}{(\log N)^{B+C}}\\
        &\ll\frac{N}{(\log N)^C}, 
    \end{align*}
    as required.
\end{proof}

Using Lemmas~\ref{salphlem} and~\ref{revsalphalem}, we now give the major arc contribution, which matches the expected asymptotic~\eqref{r12asym}.
\begin{proposition}\label{majorprop}
    We have,
    \begin{equation}\label{majorpropeq}
        \int_{\fM}S(\alpha)\rev{S}_b^2(\alpha)e(-N\alpha)\mathrm{d}\alpha=\mathfrak{S}_3(N)\cS_{1,2}(N)+O_{b,B}\left(\frac{N^2}{(\log N)^B}\right),
    \end{equation}
    where $\mathfrak{S}_3(N)$ and $\cS_{1,2}(N)$ are as in~\eqref{sing1eq} and~\eqref{S12def} respectively.
\end{proposition}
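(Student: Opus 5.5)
On each major arc $\fM(a,q)$, write $\alpha=a/q+\beta$ and replace $S(\alpha)$ and $\rev{S}_b(\alpha)$ by their approximations from Lemmas~\ref{salphlem} and~\ref{revsalphalem}. Taking $C$ large in terms of $B$, this gives
\begin{equation*}
    S(\alpha)\rev{S}_b^2(\alpha)=\E(q)\rev{\E}_b(q)^2\,v(\beta)\rev{v}_b(\beta)^2+O\left(\frac{N^3}{(\log N)^{C}}\right).
\end{equation*}
This uses the trivial bounds $|S|,|\rev{S}_b|,|v|,|\rev{v}_b|\ll N$. The total measure of $\fM$ is $\ll Q^3/N=(\log N)^{3B}/N$, so the error integrates to $O(N^2(\log N)^{3B-C})$. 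This is acceptable once $C=4B$.

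Since $\rev{\E}_b(q)$ vanishes unless $q\mid b^3-b$, only the $O_b(1)$ moduli $q\mid b^3-b$ survive. The main term therefore becomes
\begin{equation*}
    \sum_{q\mid b^3-b}\frac{\mu(q)^3}{\varphi(q)^3}\sum_{\substack{a\leq q\\(a,q)=1}}e(-Na/q)\int_{|\beta|\leq Q/N}v(\beta)\rev{v}_b(\beta)^2e(-N\beta)\,\mathrm{d}\beta .
\end{equation*}
Since $\mu(q)^3=\mu(q)$, the sum over $a$ is the Ramanujan sum $c_q(-N)=c_q(N)$ (Lemma~\ref{ramlem}). Lemma~\ref{singlem} then identifies $\sum_{q\mid b^3-b}\mu(q)c_q(N)/\varphi(q)^3$ as $\fS_3(N)$.

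It remains to complete the $\beta$-integral to $[-1/2,1/2]$. The full integral equals
\begin{equation*}
    \int_{-1/2}^{1/2}v(\beta)\rev{v}_b(\beta)^2e(-N\beta)\,\mathrm{d}\beta=\cS_{1,2}(N),
\end{equation*}
by orthogonality: it counts $n_1+n_2+n_3=N$ with $n_i\leq N$ and $(\rev{n_2}\rev{n_3},b)=1$. For the tail $Q/N<|\beta|\leq 1/2$, Lemma~\ref{Weyllem} gives $v(\beta)\ll 1/\|\beta\|$ and Lemma~\ref{Weylrevlem} gives $\rev{v}_b(\beta)\ll_b \log N/\|\beta\|$. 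The tail is therefore
\begin{equation*}
    \ll_b (\log N)^2\int_{Q/N}^{1/2}\beta^{-3}\,\mathrm{d}\beta\ll \frac{N^2(\log N)^2}{Q^2}=\frac{N^2}{(\log N)^{2B-2}}.
\end{equation*}
Multiplying by the bounded factor $\sum_{q\mid b^3-b}\varphi(q)^{-3}|c_q(N)|\ll_b 1$ yields $O_b(N^2/(\log N)^{B})$, since $B\geq2$. Alternatively one can interpolate using the trivial bound $\min(N,1/\|\beta\|)$ for one factor.

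The main point needing care is the bookkeeping of error terms. The approximation errors in Lemmas~\ref{salphlem} and~\ref{revsalphalem} must beat the measure $Q^3/N$ of the major arcs, and the cross terms in the expansion of the product must be bounded using the trivial $O(N)$ bounds on each factor. Everything else is a direct assembly of the earlier lemmas. The disjointness of the arcs, which holds for large $N$, ensures the sum over $(a,q)$ is a clean decomposition.
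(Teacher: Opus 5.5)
Your proposal is correct and follows the paper's proof essentially step for step. You approximate on each arc via Lemmas~\ref{salphlem} and~\ref{revsalphalem} with $C=4B$ against the total arc measure $\ll Q^3/N$, reduce to $q\mid b^3-b$, and evaluate the singular series with Ramanujan sums and Lemma~\ref{singlem}; you then complete the $\beta$-integral using Lemmas~\ref{Weyllem} and~\ref{Weylrevlem}. The only thing left implicit is that summing over all $q\mid b^3-b$ instead of $q\le Q$ requires $N$ large enough that $b^3-b\le Q$, which the paper states explicitly.
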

\begin{proof}
    By Lemmas~\ref{salphlem} and~\ref{revsalphalem},
    \begin{equation*}
        S(\alpha)\rev{S}_b^2(\alpha)=\E(q)(\rev{\E}_b(q))^2\:v\left(\alpha-\frac{a}{q}\right)\left(\rev{v}_b\left(\alpha-\frac{a}{q}\right)\right)^2+O_{b,B,C}\left(\frac{N^3}{(\log N)^C}\right)
    \end{equation*}
    for $\alpha\in\fM(a,q)$.
    Therefore,
    \begin{align}
        &\int_{\fM}S(\alpha)\rev{S}^2_b(\alpha)e(-N\alpha)\mathrm{d}\alpha\notag\\
        &=\sum_{q=1}^Q\sum_{\substack{0\leq a\leq q\\(a,q)=1}}\E(q)(\rev{\E}_b(q))^2\notag\\
        &\qquad\times\int_{\fM(a,q)}\left(v\left(\alpha-\frac{a}{q}\right)\left(\rev{v}_b\left(\alpha-\frac{a}{q}\right)\right)^2 e(-N\alpha)+O_{b,B,C}\left(\frac{N^3}{(\log N)^C}\right)\right)\mathrm{d}\alpha.\label{bigmajoreq}
    \end{align}
    For the big-$O$ term in~\eqref{bigmajoreq}, we note that the Lebesgue measure of each major arc is $\mu(\fM(a,q))\ll Q/N$ and thus 
    \begin{equation*}
        \sum_{q=1}^Q\sum_{\substack{0\leq a<q\\(a,q)=1}}\int_{\fM(a,q)}\frac{N^3}{(\log N)^C}\mathrm{d}\alpha\ll\frac{Q^3N^2}{(\log N)^C}\ll\frac{N^2}{(\log N)^{C-3B}}.
    \end{equation*}
    That is, provided $C$ is chosen large enough $(C\geq 4B)$, the big-$O$ term in~\eqref{bigmajoreq} can be absorbed into the desired error term in~\eqref{majorpropeq}. We thus focus on the ``main" term in~\eqref{bigmajoreq}. Here,
    \begin{align}
        &\sum_{q=1}^Q\sum_{\substack{0\leq a\leq q\\(a,q)=1}}\E(q)(\rev{\E}_b(q))^2\int_{\fM(a,q)}v\left(\alpha-\frac{a}{q}\right)\left(\rev{v}_b\left(\alpha-\frac{a}{q}\right)\right)^2 e(-N\alpha)\mathrm{d}\alpha\notag\\
        &\quad=\sum_{\substack{1\leq q\leq Q\\q\mid b^3-b}}\frac{\mu(q)}{\varphi(q)^3}\sum_{\substack{1\leq a\leq q\\(a,q)=1}}e\left(-\frac{Na}{q}\right)\int_{\frac{a}{q}-Q/N}^{\frac{a}{q}+Q/N}v\left(\alpha-\frac{a}{q}\right)\left(\rev{v}_b\left(\alpha-\frac{a}{q}\right)\right)^2e\left(-N\left(\alpha-\frac{a}{q}\right)\right)\mathrm{d}\alpha\notag\\
        &\quad=\sum_{\substack{1\leq q\leq Q\\q\mid b^3-b}}\frac{\mu(q)}{\varphi(q)^3}c_{q}(-N)\int_{-Q/N}^{Q/N}v(\beta)(\rev{v}_b(\beta))^2e(-N\beta)\mathrm{d}\beta,\label{majormaineq}
    \end{align}
    where $c_q(-N)$ is as defined in Lemma~\ref{ramlem}. We simplify~\eqref{majormaineq} in two steps. First, we consider the sum in front of the integral. Here, Lemma~\ref{singlem} gives
    \begin{equation}\label{singapproxeq}
        \sum_{\substack{1\leq q\leq Q\\q\mid b^3-b}}\frac{\mu(q)}{\varphi(q)^3}c_q(-N)=\fS_3(N)-\sum_{\substack{q>Q\\q\mid b^3-b}}\frac{\mu(q)}{\varphi(q)^3}c_q(-N).
    \end{equation}
    If $N$ and thus $Q=(\log N)^B$ is sufficiently large, then the tail sum in~\eqref{singapproxeq} contains no terms. That is,
    \begin{equation}\label{majfact1}
        \sum_{\substack{1\leq q\leq Q\\q\mid b^3-b}}\frac{\mu(q)}{\varphi(q)^3}c_q(-N)=\fS_3(N)
    \end{equation}
    for sufficiently large $N$. As for the integral in~\eqref{majormaineq}, Lemmas~\ref{Weyllem} and~\ref{Weylrevlem} give
    \begin{equation*}
        v(\beta)\ll\frac{1}{||\beta||}\quad\text{and}\quad \rev{v}_b(\beta)\ll_b\frac{\log N}{||\beta||}
    \end{equation*}
    so that
    \begin{equation*}
        \int_{Q/N}^{1/2}v(\beta)(\rev{v}_b(\beta))^2e(-N\beta)\mathrm{d}\beta\ll_b \int_{Q/N}^{1/2}\frac{(\log N)^2}{\beta^3}\mathrm{d}\beta\ll\frac{N^2(\log N)^2}{Q^2}\ll \frac{N^2}{(\log N)^B}
    \end{equation*}
    and similarly
    \begin{equation*}
        \int_{-1/2}^{-Q/N}v(\beta)(\rev{v}_b(\beta))^2e(-N\beta)\mathrm{d}\beta\ll_b\frac{N^2}{(\log N)^B}.
    \end{equation*}
    Therefore,
    \begin{align}
        \int_{-Q/N}^{Q/N}v(\beta)(\rev{v}_b(\beta))^2e(-N\beta)\mathrm{d}\beta&=\int_{-1/2}^{1/2}v(\beta)(\rev{v}_b(\beta))^2e(-N\beta)\mathrm{d}\beta+O_b\left(\frac{N^2}{(\log N)^B}\right)\notag\\
        &=\cS_{1,2}(N)+O_b\left(\frac{N^2}{(\log N)^B}\right).\label{majfact2}
    \end{align}
    Substituting~\eqref{majfact1} and~\eqref{majfact2} into~\eqref{majormaineq} and then~\eqref{bigmajoreq}, we obtain
    \begin{equation*}
        \int_{\fM}S(\alpha)\rev{S}_b^2(\alpha)e(-N\alpha)\mathrm{d}\alpha=\mathfrak{S}_3(N)\cS_{1,2}(N)+O_{b,B}\left(\frac{N^2}{(\log N)^B}\right)
    \end{equation*}
    as desired.
\end{proof}

\subsection{The minor arcs and concluding the proof of Theorem~\ref{ternaryhcabthm1}}\label{minorsect}
To conclude the proof of Theorem~\ref{ternaryhcabthm1}, we now deal with the minor arc contribution. That is,
\begin{equation*}
    \int_{\fm}S(\alpha)\rev{S}_b^2(\alpha)e(-N\alpha)\mathrm{d}\alpha.
\end{equation*}

Given Vinogradov's exponential sum bound (Lemma~\ref{vinlem}), bounding the minor arc contribution becomes a relatively straightforward task. The only additional lemma we need is the following $L^2$ estimate.
\begin{lemma}\label{parslem}
    One has
    \begin{equation*}
        \int_0^1|\rev{S}_b(\alpha)|^2\mathrm{d}\alpha\ll_b N\log N.
    \end{equation*}
\end{lemma}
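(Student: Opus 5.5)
By Parseval's identity, the integral $\int_0^1|\rev{S}_b(\alpha)|^2\,\mathrm{d}\alpha$ equals the sum of the squared coefficients of the exponential sum. I will write $\rev{S}_b(\alpha)=\sum_{n\leq N}\rev{\Lambda}_b'(n)e(n\alpha)$, where $\rev{\Lambda}_b'$ is the weight introduced in the proof of Lemma~\ref{revsalphalem}. The key point is that $n=\rev{p}$ determines $p$ up to boundedly many possibilities. Then
\begin{equation*}
    \int_0^1|\rev{S}_b(\alpha)|^2\,\mathrm{d}\alpha=\sum_{n\leq N}\Big(\sum_{\substack{p:\ \rev{p}=n\\ (n,b^3-b)=1}}\log p\Big)^2 .
\end{equation*}

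First I will show that the map $p\mapsto\rev{p}$ is injective once $(\rev{p},b)=1$. That condition says the leading digit of $\rev{p}$, which is the last digit of $p$, is nonzero. Hence $p$ has no trailing zeros, and $p$ is recovered by reversing the digits of $\rev{p}$: it has the same number of digits, and reversing twice gives back $p$. So each $n$ has at most one prime $p$ with $\rev{p}=n$. The inner sum is therefore either $0$ or a single $\log p$.

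Second, I need to bound $\log p$ and count the terms. Since $p$ and $\rev{p}=n\leq N$ have the same number of base-$b$ digits, $p<bN$, so $\log p\ll_b\log N$. This gives
\begin{equation*}
    \int_0^1|\rev{S}_b(\alpha)|^2\,\mathrm{d}\alpha\ll_b\log N\sum_{\substack{\rev{p}\leq N\\(\rev{p},b^3-b)=1}}\log p=\log N\cdot\rev{S}_b(0).
\end{equation*}
To bound $\rev{S}_b(0)$, I can either use injectivity directly, counting at most $N$ values of $n$ each carrying weight $\ll_b\log N$, or apply Theorem~\ref{thm: smooth zsiflaw legeis} with $q=1$. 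The direct count only gives $(\log N)^2 N$ overall, so I will use the theorem. It gives $\rev{S}_b(0)=\#\fB(N)+O(N/\log N)\ll N$, which yields the claimed bound $N\log N$.

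There is no real obstacle in this argument. The only care needed is the injectivity and the comparison $\log p\asymp\log\rev{p}$, both of which follow because the reversal preserves digit length under the leading-digit coprimality condition.
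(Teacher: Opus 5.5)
Your proof is correct in substance and has the same skeleton as the paper's. Orthogonality (Parseval) reduces the integral to the diagonal $\sum_{\rev{p}\le N,\,(\rev{p},b^3-b)=1}(\log p)^2$, using that $p\mapsto\rev{p}$ is injective; the paper uses this silently and you make it explicit. However, your stated reason for injectivity is wrong. The condition $(\rev{p},b)=1$ constrains the units digit of $\rev{p}$, i.e.\ the \emph{leading} digit of $p$, not the leading digit of $\rev{p}$. The correct reason $p$ has no trailing zero is simply that $b\nmid p$ for every prime $p\neq b$. The only possible exception is $p=b$ with $b$ prime, which has $\rev{p}=1$, and no other prime reverses to $1$. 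So both injectivity and your bound $p<bN$ survive.

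The real difference from the paper is in the last step. You invoke Theorem~\ref{thm: smooth zsiflaw legeis} with $q=1$ to get $\rev{S}_b(0)\ll N$, because you believe the direct count loses a logarithm. That loss only occurs if you count all $n\le N$. Every contributing prime satisfies $p<bN$ and the map is injective, so $\sum(\log p)^2\le\log(bN)\sum_{p<bN}\log p\ll_b N\log N$ by Chebyshev's bound. This is essentially the paper's argument: it bounds the diagonal by $\sum_{p\le b^L}(\log p)^2\ll_b N\log N$ via the prime number theorem, with $b^{L-1}\le N<b^L$.

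Your route is valid and not circular, since the theorem is proved in Section~\ref{smoothsect}, and it yields the sharper $\rev{S}_b(0)=\#\fB(N)+O(N/\log N)$. But it imports Siegel--Walfisz and the Bhowmik--Suzuki machinery and makes the implied constant ineffective, whereas the paper's route is elementary and effective.
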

\begin{proof}
    Let $L$ be the digit length of $N$ in base-$b$, so that
    \begin{equation*}
        b^{L-1}\leq N<b^L.
    \end{equation*}
    By the prime number theorem and partial summation,
    \begin{equation*}
        \sum_{p\leq b^L}(\log p)^2\sim b^L\log b^L\ll_bN\log N.
    \end{equation*}
    Therefore,
    \begin{align*}
        \int_0^1|\rev{S}_b(\alpha)|^2\mathrm{d}\alpha&=\int_0^1\sum_{\rev{p_1},\rev{p_2}\leq N}e((\rev{p_1}-\rev{p_2})\alpha)\log p_1\log p_2\:\mathrm{d\alpha}\\
        &=\sum_{\rev{p_1},\rev{p_2}\leq N}\int_0^1e((\rev{p_1}-\rev{p_2})\alpha)\log p_1\log p_2\:\mathrm{d\alpha}\\
        &\ll\sum_{\rev{p}\leq N}(\log p)^2\ll\sum_{\rev{p}\leq b^L}(\log p)^2\ll N\log N,
    \end{align*}
    as required.
\end{proof}
Our minor arc estimate is then as follows.
\begin{proposition}\label{minorprop}
    We have
    \begin{equation*}
        \int_{\fm}S(\alpha)\rev{S}_b^2(\alpha)e(-N\alpha)\mathrm{d}\alpha\ll_{b,B}\frac{N}{(\log N)^{\frac{B}{2}-5}}.
    \end{equation*}
\end{proposition}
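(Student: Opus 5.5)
The plan is the standard Vinogradov minor arc argument: take the supremum of the prime sum $S(\alpha)$ over $\fm$, and bound what remains by the $L^2$ estimate for $\rev{S}_b$. By the triangle inequality,
\begin{equation*}
    \left|\int_{\fm}S(\alpha)\rev{S}_b^2(\alpha)e(-N\alpha)\mathrm{d}\alpha\right|\leq \sup_{\alpha\in\fm}|S(\alpha)|\int_0^1|\rev{S}_b(\alpha)|^2\mathrm{d}\alpha .
\end{equation*}
Lemma~\ref{parslem} bounds the second factor by $\ll_b N\log N$. Everything therefore reduces to a pointwise bound for $S(\alpha)$ on the minor arcs, which comes from Lemma~\ref{vinlem}.

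\textbf{Dirichlet approximation.} Fix $\alpha\in\fm$. By Dirichlet's theorem with parameter $N/Q$, there are $1\leq q\leq N/Q$ and $(a,q)=1$ with
\begin{equation*}
    \left|\alpha-\frac{a}{q}\right|\leq \frac{Q}{qN}\leq\frac{1}{q^2}.
\end{equation*}
If $q\leq Q$, then $|\alpha-a/q|\leq Q/N$, so $\alpha\in\fM(a,q)$. This contradicts $\alpha\in\fm$, so necessarily $Q<q\leq N/Q$. The endpoint arcs at $0$ and $1$ are handled by the usual identification $\alpha\sim\alpha+1$.

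\textbf{Pointwise bound.} Lemma~\ref{vinlem} applies with this $q$ and gives
\begin{equation*}
    S(\alpha)\ll\left(\frac{N}{\sqrt{Q}}+N^{4/5}+\frac{N}{\sqrt{Q}}\right)(\log N)^4\ll\frac{N}{(\log N)^{B/2-4}} .
\end{equation*}
Here the middle term is absorbed because $Q=(\log N)^B$ is only a power of $\log N$, and the last term uses $N^{1/2}q^{1/2}\leq N/\sqrt{Q}$ since $q\leq N/Q$.

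\textbf{Combining.} Multiplying the supremum bound by the factor $\log N$ from Lemma~\ref{parslem} gains exactly one more logarithm. This produces the exponent $B/2-5$ in Proposition~\ref{minorprop}.

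I expect the one point needing care to be the power of $N$, not the logarithms. Combined literally, the two steps above give $N\cdot N\log N$, that is $N^2$ times $(\log N)^{-(B/2-5)}$. To reach the bound exactly as the proposition states it, with $N$ in place of $N^2$, this argument must be paired with a further saving of a full factor of $N$ beyond the $L^2$ estimate of Lemma~\ref{parslem}.

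I would first look for that saving by re-examining whether $\int_0^1|\rev{S}_b|^2$ can be normalised, for instance by dividing by the trivial size of $\rev{S}_b$. If no such saving is available, the argument only delivers the $N^2$ form. That is still what Theorem~\ref{ternaryhcabthm1} needs, since it is compared against $\cS_{1,2}(N)\asymp_b N^2$ from Lemma~\ref{S12S21lem}, and there the choice $B=2A+10$ gives the $N^2/(\log N)^A$ error term.
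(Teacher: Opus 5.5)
Your argument matches the paper's proof step for step: Dirichlet approximation with parameter $N/Q$ forces $Q<q\le N/Q$ on $\fm$, Lemma~\ref{vinlem} bounds $\sup_{\fm}|S(\alpha)|$, and Lemma~\ref{parslem} bounds $\int_0^1|\rev{S}_b(\alpha)|^2\,\mathrm{d}\alpha$. You are also right about the power of $N$: the paper's own proof ends with $\ll_b N^2/(\log N)^{B/2-5}$, so the $N$ in the statement (and in the proof of Theorem~\ref{ternaryhcabthm1}) is a misprint for $N^2$, and you should not look for an extra factor of $N$ --- by orthogonality $\int_0^1|\rev{S}_b(\alpha)|^2\,\mathrm{d}\alpha\geq\sum(\log p)^2\gg_b N\log N$, so Lemma~\ref{parslem} is sharp and cannot be ``normalised'' away, while the $N^2$ bound with $B=2A+10$ is exactly what Theorem~\ref{ternaryhcabthm1} needs.
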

\begin{proof}
    With a view to apply Vinogradov's bound (Lemma~\ref{vinlem}), let $\alpha\in\fm$ be arbitrary. By Dirichlet's approximation theorem, there exists integers $a,q$ with $(a,q)=1$ and $1\leq q\leq N/Q$ such that
    \begin{equation*}
        \left|\alpha-\frac{a}{q}\right|\leq\frac{Q}{qN}.
    \end{equation*}
    In particular, 
    \begin{equation}\label{alphaineq1}
        \left|\alpha-\frac{a}{q}\right|\leq\frac{Q}{N}
    \end{equation}
    and
    \begin{equation}\label{alphaineq2}
        \left|\alpha-\frac{a}{q}\right|\leq\frac{1}{q^2}.
    \end{equation}
    Since $\alpha\in\fm$ we must have $Q< q\leq N/Q$ for otherwise~\eqref{alphaineq1} implies that ${\alpha\in\fM(a,q)\subseteq\fM}$. In addition,~\eqref{alphaineq2} means that we may apply Lemma~\ref{vinlem} with such a choice of $q$. Thus, by Lemmas~\ref{vinlem},~\ref{parslem} and the above considerations, \begin{align*}
        \int_{\fm}S(\alpha)\rev{S}_b^2(\alpha)e(-N\alpha)\mathrm{d}\alpha&\leq \sup_{\fm}|S(\alpha)|\int_{0}^1|\rev{S}_b(\alpha)|^2\mathrm{d}\alpha\\
        &\ll_{b}\left(\frac{N}{\sqrt{Q}}+N^{4/5}+\frac{N}{\sqrt{Q}}\right)N(\log N)^5\\
        &\ll_{b}\frac{N^2}{(\log N)^{\frac{B}{2}-5}}
    \end{align*}
    as required.
\end{proof}

Combining all of our results, the proof of Theorem~\ref{ternaryhcabthm1} readily follows.

\begin{proof}[Proof of Theorem~\ref{ternaryhcabthm1}]
    By Propositions~\ref{majorprop} and~\ref{minorprop},
    \begin{align*}
        \int_{0}^1S(\alpha)\rev{S}_b^2(\alpha)e(-N\alpha)\mathrm{d}\alpha&=\int_{\fM}S(\alpha)\rev{S}_b^2(\alpha)e(-N\alpha)\mathrm{d}\alpha+\int_{\fm}S(\alpha)\rev{S}_b^2(\alpha)e(-N\alpha)\mathrm{d}\alpha\\
        &=\mathfrak{S}_3(N)\cS_{1,2}(N)+O_{b,B}\left(\frac{N}{(\log N)^{\frac{B}{2}-5}}\right).
    \end{align*}
    We conclude by setting $A=B/2-5$ for $B>10$.
\end{proof}

\subsection{Discussion of the proofs of Theorems~\ref{ternaryhcabthm2} and~\ref{almostthm:intro}}\label{discsubsect}
In this subsection we describe how one can modify the proof of Theorem~\ref{ternaryhcabthm1} to prove Theorems~\ref{ternaryhcabthm2} and~\ref{almostthm:intro}. In particular, the proof of all three results follows a similar framework using Theorem~\ref{thm: smooth zsiflaw legeis} and the circle method.

\subsubsection{{The proof of Theorem~\ref{ternaryhcabthm2}}}
For Theorem~\ref{ternaryhcabthm2}, we need to find an asymptotic for
\begin{equation*}
    \cR_{2,1}(N)=\int_0^1S(\alpha)^2\rev{S}_b(\alpha)e(-N\alpha)\mathrm{d}\alpha.
\end{equation*}
To do so, one again splits up the unit interval $[0,1]$ into major and minor arcs as discussed in Subsection~\ref{setupsub}. Extracting the main term from the major arcs is done in the exact same way as in the proof of Theorem~\ref{ternaryhcabthm1}, yielding the asymptotic~\eqref{r21asym} in place of~\eqref{r12asym}. 

The argument for the minor arcs is slightly different though. Here, the Cauchy--Schwarz inequality gives
\begin{align*}
    \int_{\fm}S(\alpha)^2\rev{S}_b(\alpha)e(-N\alpha)\mathrm{d}\alpha&\leq\sup_{\alpha\in\fm}|S(\alpha)|\int_{0}^1\left|S(\alpha)\rev{S}_b(\alpha)\right|\mathrm{d}\alpha\\
    &\leq\sup_{\alpha\in\fm}|S(\alpha)|\left(\int_0^1|S(\alpha)|^2\mathrm{d}\alpha\int_0^1|\rev{S}_b(\alpha)|^2\mathrm{d}\alpha\right)^{1/2}.
\end{align*}
To suitably bound this expression, one can use Vinogradov's bound (Lemma~\ref{vinlem}) to handle the $\sup_{\alpha\in\fm}|S(\alpha)|$ factor and Lemma~\ref{parslem} to handle $\int_{\alpha\in\fm}|\rev{S}_b(\alpha)|^2\mathrm{d}\alpha$. As for the integral over $|S(\alpha)|^2$, one can use the standard bound
\begin{equation}\label{parseq2}
    \int_0^1|S(\alpha)|^2\mathrm{d}\alpha\ll N\log N,
\end{equation}
which is analogous to Lemma~\ref{parslem} and proven in the same way.

\subsubsection{{The proof of Theorem~\ref{almostthm:intro}}}
For Theorem~\ref{almostthm:intro} we use a dyadic decomposition. That is, it suffices to show that there are at most $O_{A,b}(x/(\log x)^A)$ even $N$ with $x\leq N\leq 2x$ and  
\begin{equation*}
    N\neq p_1+\rev{p_2}.
\end{equation*}
For this, we show that
\begin{equation}\label{meanvalueeq}
    \sum_{\substack{x\leq N\leq 2x\\2\mid N}}\left|\cR_{1,1}(N)-\fS_2(N)\#\fB(N)\right|^2\ll_{b,A}\frac{x^3}{(\log x)^{A+1}}
\end{equation}
where $\cR_{1,1}(N)$ and $\fS_2(N)$ are as defined in~\eqref{R11def} and~\eqref{sing2eq} respectively. To see why~\eqref{meanvalueeq} suffices, suppose that there were $\gg_{b,A} x/(\log x)^A$ values of $N\in[x,2x]$ with $\cR_{1,1}(N)=0$. Since $\#\fB(N)\gg N$ (see~\eqref{BNbound}) one would then have
\begin{equation*}
    \sum_{\substack{x\leq N\leq 2x\\2\mid N}}\left|\cR_{1,1}(N)-\fS_2(N)\#\fB(N)\right|^2\gg_{b,A}\frac{x^3}{(\log x)^{A}},
\end{equation*}
which contradicts~\eqref{meanvalueeq}.

We now discuss how to prove~\eqref{meanvalueeq} using the circle method. So, analogous to the proofs of Theorem~\ref{ternaryhcabthm1} and~\ref{ternaryhcabthm2}, we write
\begin{align*}
    \cR_{1,1}(N)&=\int_{0}^1S(\alpha)\rev{S}_b(\alpha)e(-N\alpha)\mathrm{d}\alpha\\
    &=\int_\fM S(\alpha)\rev{S}_b(\alpha)e(-N\alpha)\mathrm{d}\alpha+\int_{\fm}S(\alpha)\rev{S}_b(\alpha)e(-N\alpha)\mathrm{d}\alpha
\end{align*}
with $\fM$ and $\fm$ the set of major and minor arcs from Subsection~\ref{setupsub}. The contribution over the major arcs $\fM$ is estimated exactly as in the case of $\cR_{1,2}(N)$ in Subsection~\ref{majorsect}. However, here we note that
\begin{equation*}
    \cS_{1,1}(N):=\sum_{\substack{n_1,n_2\leq N\\ n_1+n_2=N\\(\rev{n_2},b)=1}}1=\#\fB(N)+O(1).
\end{equation*}
In particular, we have
\begin{equation}\label{majorbinary}
    \int_\fM S(\alpha)\rev{S}_b(\alpha)e(-N\alpha)\mathrm{d}\alpha=\fS_2(N)\#\fB(N)+O_{b,B}\left(\frac{N}{(\log N)^B}\right).
\end{equation}
By choosing a suitable value of $B$ in terms of $A$, it thus follows from~\eqref{majorbinary} and the triangle inequality that
\begin{align}
    &\sum_{\substack{x\leq N\leq 2x\\2\mid N}}\left|\cR_{1,1}(N)-\fS_2(N)\#\fB(N)\right|^2\notag\\
    &\qquad\ll_{b,A}\sum_{\substack{x\leq N\leq 2x\\ 2\mid N}}\left|\int_{\fm}S(\alpha)\rev{S}_b(\alpha)e(-N\alpha)\mathrm{d}\alpha\right|^2+\frac{x^3}{(\log x)^{A+1}},\label{bigmeaneq}
\end{align}
noting that $(a+b)^2\ll a^2+b^2$ for all real $a,b$. To bound the first term in~\eqref{bigmeaneq}, we use Bessel's inequality, which gives
\begin{align*}
    \sum_{\substack{x\leq N\leq 2x\\ 2\mid N}}\left|\int_{\fm}S(\alpha)\rev{S}_b(\alpha)e(-N\alpha)\mathrm{d}\alpha\right|^2\leq \int_{\fm}|S(\alpha)|^2|\rev{S}_b(\alpha)|^2\mathrm{d}\alpha.
\end{align*}
To finish, we apply Vinogradov's bound (Lemma~\ref{vinlem}) and Lemma~\ref{parslem}, giving
\begin{equation*}
    \int_{\fm}|S(\alpha)|^2|\rev{S}_b(\alpha)|^2\mathrm{d}\alpha\ll_{b,A}\frac{x^3}{(\log x)^A}.
\end{equation*}
This completes the proof of~\eqref{meanvalueeq} and thus Theorem~\ref{almostthm:intro}.

\section{Proof of Theorem~\ref{sqfreethm}}\label{sqfreesect}
In this section, we use Theorem~\ref{thm: smooth zsiflaw legeis} to prove Theorem~\ref{sqfreethm}. That is, we prove an asymptotic for the number of representations of an integer as the sum of a reversed prime and a square-free number.
\begin{proof}[Proof of Theorem~\ref{sqfreethm}]
    Using the standard identity $\mu^2(n)=\sum_{d^2\mid n}\mu(d)$, we have
    \begin{align*}
        \cR_{\square}(N)=\sum_{\substack{\rev{p}<N\\ (\rev{p},b^3-b)=1}}\mu^2(N-\rev{p})\log p&=\sum_{\substack{\rev{p}<N\\(\rev{p},b^3-b)=1}}\sum_{d^2\mid N-\rev{p}}\mu(d)\log p\\
        &=\sum_{d\leq \sqrt{N}}\mu(d)\sum_{\substack{\rev{p}<N\\(\rev{p},b^3-b)=1\\\rev{p}\equiv N\thinspace(d^2)}}\log p.
    \end{align*}
    We now split our argument based on the range of $d$, so that
    \begin{equation*}
        \cR_{\square}(N)=s_1(N)+s_2(N),
    \end{equation*}
    where 
    \begin{align*}
        s_1(N)&:=\sum_{d\leq (\log N)^{A+1}}\mu(d)\sum_{\substack{\rev{p}<N\\(\rev{p},b^3-b)=1\\\rev{p}\equiv N\thinspace(d^2)}}\log p,\\
        s_2(N)&:=\sum_{(\log N)^{A+1}<d\leq\sqrt{N}}\mu(d)\sum_{\substack{\rev{p}<N\\(\rev{p},b^3-b)=1\\\rev{p}\equiv N\thinspace(d^2)}}\log p.
    \end{align*}
    The sum $s_2(N)$ is readily bounded as
    \begin{align}\label{s2bound}
        |s_2(N)|&\leq \sum_{(\log N)^{A+1}<d\leq\sqrt{N}}\sum_{\substack{\rev{p}<N\\(\rev{p},b^3-b)=1\\\rev{p}\equiv N\thinspace(d^2)}}\log p\notag\\
        &\ll_b \sum_{(\log N)^{A+1}<d\leq\sqrt{N}}\left(\frac{N}{d^2}+1\right)\log N\ll\frac{N}{(\log N)^{A}},
    \end{align}
    which is sufficiently small as to be absorbed into the error term of the final result~\eqref{squarerep}. We thus shift our attention to $s_1(N)$. Here we apply Theorem~\ref{thm: smooth zsiflaw legeis} to obtain
    \begin{align}\label{s1bound1}
        s_1(N)&=\#\mathfrak{B}(N)\sum_{d\leq(\log N)^{2A+1}}\mu(d)\frac{(d^2,b^3-b)}{\varphi((d^2,b^3-b))}\frac{\rho_b(N,d^2)}{d^2}+O_{b,A}\left(\frac{N}{(\log N)^A}\right).
    \end{align}
    We write the sum above as
    \begin{equation*}
        \sum_{d\leq(\log N)^{2A+1}}=\sum_{d=1}^{\infty}-\sum_{d>(\log N)^{2A+1}}.
    \end{equation*}
    Here we can use that $(d^2,b^3-b)=O_b(1)$ to bound the tail sum as 
    \begin{align*}
        \sum_{d>(\log N)^{A+1}}\mu(d)\frac{(d^2,b^3-b)}{\varphi((d^2,b^3-b))}\frac{\rho_b(N,d^2)}{d^2}&\ll_b\sum_{d>(\log N)^{A+1}}\frac{1}{d^2}\ll\frac{1}{(\log N)^{A+1}}.
    \end{align*}
    As a result, we may ``complete" the sum in~\eqref{s1bound1}, with
    \begin{equation}\label{s1bound2}
        s_1(N)=\#\mathfrak{B}(N)\sum_{d=1}^\infty\mu(d)\frac{(d^2,b^3-b)}{\varphi((d^2,b^3-b))}\frac{\rho_b(N,d^2)}{d^2}+O_{b,A}\left(\frac{N}{(\log N)^A}\right).
    \end{equation}
    Now, applying the standard identity $m/\varphi(m)=\prod_{p\mid m}(p/(p-1))$, and converting to an Euler product,
    \begin{align}
        \sum_{d=1}^\infty\mu(d)\frac{(d^2,b^3-b)}{\varphi((d^2,b^3-b))}\frac{\rho_b(N,d^2)}{d^2}
        &=\sum_{d=1}^\infty\frac{\mu(d)}{d^2}\rho_b(N,d^2)\prod_{p\mid (d,b^3-b)}\left(\frac{p}{p-1}\right)\notag\\
        &=\prod_{p\mid b^3-b}\left(1-\frac{\rho_b(N,p)}{p(p-1)}\right)\prod_{p\nmid b^3-b}\left(1-\frac{\rho_b(N,p)}{p^2}\right)\notag\\
        &=\prod_{\substack{p\mid b^3-b\\p\nmid N}}\left(1-\frac{1}{p^2-p}\right)\prod_{p\nmid b^3-b}\left(1-\frac{1}{p^2}\right).\label{b3bprod1}
    \end{align}
    Here,
    \begin{align}
        \prod_{p\nmid b^3-b}\left(1-\frac{1}{p^2}\right)&=\frac{\prod_{p\geq 2}\left(1-1/p^2\right)}{\prod_{p\mid b^3-b}(1-1/p^2)}\notag\\
        &=\sum_{d=1}^\infty\frac{\mu(d)}{d^2}\prod_{p\mid b^3-b}\frac{p^2}{p^2-1}\notag\\
        &=\frac{1}{\zeta(2)}\prod_{p\mid b^3-b}\left(1+\frac{1}{p^2-1}\right)\label{b3bprod2}.
    \end{align}
    Substituting~\eqref{b3bprod2} into~\eqref{b3bprod1} and then~\eqref{s1bound2} gives
    \begin{equation}\label{s1asym}
        s_1(N)=\frac{\#\mathfrak{B}(N)}{\zeta(2)}\prod_{\substack{p\mid b^3-b}}\left(1+\frac{1}{p^2-1}\right)\prod_{\substack{p\mid b^3-b\\ p\nmid N}}\left(1-\frac{1}{p^2-p}\right)+O_{b,A}\left(\frac{N}{(\log N)^A}\right).
    \end{equation}
    To conclude, we recall that our bound~\eqref{s2bound} for $s_2(N)$ was $O(N/(\log N)^A)$, so that the asymptotic expression for $\cR_{\square}(N)$ is the same as that for $s_1(N)$ in~\eqref{s1asym}.
\end{proof}

\section{Further discussion on sums of reversed primes}\label{sumsect}
In this section, we discuss representing integers $N>0$ as \emph{purely} the sum of reversed primes, rather than a combination of primes and reversed primes. That is, we consider writing $N$ in the form
\begin{equation}\label{purereveq}
    N=\rev{p_1}+\cdots+\rev{p_k},
\end{equation}
for some $k\geq 2$. More precisely, we analyse the behaviour of the counting function
\begin{equation*}
    \cR_{0,k}(N)=\sum_{\substack{\rev{p_1},\ldots,\rev{p_k}\leq N\\\rev{p_1}+\cdots+\rev{p_k}=N\\ (\rev{p_1}\cdots\rev{p_k},b^3-b)=1}}(\log p_1)\cdots(\log p_k).
\end{equation*}

\subsection{Circle method heuristics}
We can obtain a conjectural asymptotic for $\cR_{0,k}(N)$ by applying the circle method and only considering the major arc contribution. In particular, applying the circle method as in Section~\ref{ternsect} leads us to conjecture the following.

\begin{conjecture}\label{purecon}
    Let $b\geq 2$ and $k\geq 2$ be fixed. Then, for any $A>0$,
    \begin{equation}\label{r0kasym}
        \mathcal{R}_{0,k}(N)=\mathfrak{S}_k(N)\cS_{0,k}(N)+O_{b,A}\left(\frac{N^{k-1}}{(\log N)^A}\right),
    \end{equation}
    where
    \begin{equation}\label{singkeq}
        \mathfrak{S}_k(N):=\prod_{\substack{p\mid b^3-b\\ p\mid N}}\left(1-\left(\frac{-1}{p-1}\right)^{k-1}\right)\prod_{\substack{p\mid b^3-b\\ p\nmid N}}\left(1-\left(\frac{-1}{p-1}\right)^k\right)
    \end{equation}
    and
    \begin{equation}\label{S0kdef}
       \cS_{0,k}(N):=\sum_{\substack{n_1,\ldots,n_k\leq N\\n_1+\cdots+n_k=N\\(\rev{n_1}\cdots\rev{n_k},b)=1}}1. 
    \end{equation}
\end{conjecture}

From~\eqref{singkeq} we see that $\fS_k(N)>0$ if and only if $k$ and $N$ have the same parity. The term $\cS_{0,k}(N)$ is more complicated with its positivity depending on the choice of the base $b$. However, as a simple example, if $b$ is a prime then $(\rev{n},b)=1$ is satisfied for all $n\in\N$ and 
\begin{equation*}
    \cS_{0,k}(N)\asymp N^{k-1}\qquad (b\ \text{prime})
\end{equation*}
so that Conjecture~\ref{purecon} yields an asymptotic. This justifies Conjecture~\ref{primebasecon} mentioned in the introduction.

The key barrier to proving Conjecture~\ref{purecon} is a suitable estimate for the minor arcs in the circle method. The fact that there are no primes in the sum~\eqref{purereveq} means that we cannot apply Vinogradov's bound (Lemma~\ref{vinlem}) as in the proof of Theorems~\ref{ternaryhcabthm1} and~\ref{ternaryhcabthm2}. So, if one could prove the following ``reverse" version of Vinogradov's bound, then Conjecture~\ref{purecon} would follow for $k\geq 3$.

\begin{hypothesis}\label{vinhypo}
    Let $N$ be sufficiently large and $B\geq 1$. Suppose that $\alpha$ is a real number satisfying
    \begin{equation*}
        \left|\alpha-\frac{a}{q}\right|\leq\frac{1}{q^2}\quad\text{with}\quad (\log N)^B<q\leq\frac{N}{(\log N)^B}
    \end{equation*}
    and $(a,q)=1$. Then,
    \begin{equation*}
        \rev{S}_b(\alpha):=\sum_{\substack{\rev{p}\leq N\\(\rev{p},b^3-b)=1}}e(\rev{p}\alpha)\log p\ll_{b,B}\frac{N}{(\log N)^A},
    \end{equation*}
    for some $A=A(B)>0$ with $A\to\infty$ as $B\to\infty$.
\end{hypothesis}

\begin{theorem}\label{hypconthm}
    For $k\geq 3$, Conjecture~\ref{purecon} follows from Hypothesis~\ref{vinhypo}.
\end{theorem}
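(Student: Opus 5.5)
We will run the circle method of Section~\ref{ternsect} with $\rev{S}_b(\alpha)^k$ in place of $S(\alpha)\rev{S}_b^2(\alpha)$, writing
\begin{equation*}
    \cR_{0,k}(N)=\int_0^1\rev{S}_b(\alpha)^k e(-N\alpha)\,\mathrm{d}\alpha,
\end{equation*}
and splitting $[0,1]$ into the same major arcs $\fM$ and minor arcs $\fm$ with $Q=(\log N)^B$ as in Subsection~\ref{setupsub}.

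\textbf{Major arcs.} On $\fM(a,q)$, Lemma~\ref{revsalphalem} gives $\rev{S}_b(\alpha)=\rev{\E}_b(q)\rev{v}_b(\beta)+O(N/(\log N)^C)$. Raising this to the $k$-th power costs $O(N^k/(\log N)^C)$, and integrating the error over $\fM$ (total measure $\ll Q^2/N$) costs $O(N^{k-1}/(\log N)^{C-2B})$. The main term becomes
\begin{equation*}
    \sum_{q\mid b^3-b}\frac{\mu(q)^k}{\varphi(q)^k}c_q(-N)\int_{-Q/N}^{Q/N}\rev{v}_b(\beta)^ke(-N\beta)\,\mathrm{d}\beta.
\end{equation*}
For large $N$ the $q$-sum is complete, since $q\mid b^3-b$ forces $q\leq Q$. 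Its Euler product is $\prod_{p\mid b^3-b}(1+(-1)^k c_p(N)/(p-1)^k)$, with $c_p(N)=p-1$ or $-1$ according as $p\mid N$ or not; this yields exactly $\fS_k(N)$ in \eqref{singkeq}, arguing as in Lemma~\ref{singlem}. For the integral, Lemma~\ref{Weylrevlem} shows the tail over $Q/N\leq|\beta|\leq 1/2$ is $\ll (\log N)^k\int_{Q/N}^{1/2}\beta^{-k}\,\mathrm{d}\beta\ll N^{k-1}(\log N)^k/Q^{k-1}$. Completing the integral to $[-1/2,1/2]$ then gives $\cS_{0,k}(N)$ by orthogonality. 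Overall the major arcs contribute $\fS_k(N)\cS_{0,k}(N)+O(N^{k-1}/(\log N)^{B(k-1)-k})$.

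\textbf{Minor arcs.} For $\alpha\in\fm$, Dirichlet's theorem supplies $a/q$ with $(a,q)=1$, $Q<q\leq N/Q$ and $|\alpha-a/q|\leq 1/q^2$, exactly as in Proposition~\ref{minorprop}. Hypothesis~\ref{vinhypo} then gives $\sup_{\fm}|\rev{S}_b(\alpha)|\ll N/(\log N)^{A(B)}$. We bound
\begin{equation*}
    \int_\fm|\rev{S}_b|^k\leq \Big(\sup_\fm|\rev{S}_b|\Big)^{k-2}\int_0^1|\rev{S}_b|^2.
\end{equation*}
Lemma~\ref{parslem} bounds the last integral by $N\log N$. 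For $k=3$ we also use the trivial bound $|\rev{S}_b|\ll N$, though the exponent $k-2\geq1$ already suffices. The minor arcs are therefore $\ll N^{k-1}(\log N)^{1-(k-2)A(B)}$. Since $k\geq3$, this is a positive power of the saving, and taking $B$ large makes it $O(N^{k-1}/(\log N)^{A'})$ for any prescribed $A'$.

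\textbf{Main obstacle.} The only delicate point is this minor-arc step: one needs $k-2\geq1$ so that the sup bound is genuinely used. This is why $k=2$ is excluded, since then only Parseval is available and it gives no saving. Beyond that, one must check that $A(B)\to\infty$ lets us choose $B$, and hence the implied constants, to reach any target exponent $A$ in \eqref{r0kasym}. The remaining bookkeeping on the major arcs is routine.
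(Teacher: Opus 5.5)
Your proposal is correct and is exactly the routine adaptation of Section~\ref{ternsect} that the paper has in mind; the paper omits the proof for precisely this reason. The steps match: Lemma~\ref{revsalphalem} on $\fM$, the Euler product giving $\fS_k(N)$, Lemma~\ref{Weylrevlem} for the tails, and $(\sup_{\fm}|\rev{S}_b|)^{k-2}$ times Lemma~\ref{parslem} on $\fm$. The only slip is that $\fM$ has total measure $\ll Q^3/N$, not $Q^2/N$, since there are $\asymp Q^2$ arcs of length $\asymp Q/N$; this only turns $C-2B$ into $C-3B$, which is harmless because $C$ is arbitrary.
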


We do not give a proof of Theorem~\ref{hypconthm} here as it follows routinely from the arguments in Section~\ref{ternsect}. Similar to other binary problems, the case $k=2$ in Conjecture~\ref{purecon} cannot be obtained via Hypothesis~\ref{vinhypo}, and so this case appears completely out of reach.

We also remark that a result very similar to Hypothesis~\ref{vinhypo} is given in~\cite[Lemma~10.3]{bhowmik2024telhcirid}. However, the range of parameters is not flexible enough for our application.

\subsection{Bounds for the reverse Schnirelmann constant}
\emph{Schnirelmann's constant} is defined to be the smallest number $K$ such that every $N\geq 2$ can be written as the sum of at most $K$ primes. In line with Goldbach's conjecture, it is believed that $K=3$, yet only the bound $K\leq 4$ is known (see~\cite{helfgott2015ternary}).

In view of Conjecture~\ref{purecon}, we can also define a reverse Schnirelmann constant, or \emph{Nnamlerinhcs' constant}, to be the smallest $\rev{K}_b>1$ such that every large integer $N$ can be written as the sum of at most $\rev{K}_b$ reversed primes in base $b$. It is still not known whether $\rev{K}_b<\infty$.

\begin{conjecture}\label{nnamexistcon}
    For all $b\geq 2$, Nnamlerinhcs' constant exists. That is, $\rev{K}_b<\infty$.
\end{conjecture}

In the case where $b$ is a prime, Conjecture~\ref{purecon} implies that $\rev{K}_b=3$, with every large even number expressible as the sum of two reversed primes and every large odd number expressible as the sum of three reversed primes\footnote{For the representation of a large odd number $N$, one can just add a small odd reversed prime $\rev{p}$ to a representation of the even number $N-\rev{p}$ as the sum of two reversed primes.}.

However, even when $b=10$ complications arise. In particular, for any $n>1$ one always has
\begin{equation*}
    6\cdot 10^n\neq\rev{p_1}+\rev{p_2}\qquad (b=10)
\end{equation*}
for any choice of reversed primes $\rev{p_1}$ and $\rev{p_2}$. This is because in base 10, a reversed prime must always have $1,3,7$ or $9$ as its first digit. Or, in the context of Conjecture~\ref{purecon}, one has $\cS_{0,2}(6\cdot 10^n)=0$ when $b=10$. Therefore, it instead seems that one should have $\rev{K}_{10}=4$, with every odd number expressible as the sum of three reversed primes (Conjecture~\ref{b10con}), and every even number expressible as the sum of \emph{four} reversed primes.

In general, for any fixed $b\geq 2$, one can use a combinatorial argument to obtain bounds for $\rev{K}_{b}$, assuming it exists. Notably, $\rev{K}_b$ can attain arbitrarily large values.

\begin{theorem}\label{Kbithm}
    There exists an infinite, increasing sequence of bases $(b_i)_{i\geq 1}$ with
    \begin{equation}\label{Kbilowereq}
        \rev{K}_{b_i}\gg \log b_i.
    \end{equation}
    In particular, $\rev{K}_{b_i}\to\infty$ as $b_i\to\infty$.
\end{theorem}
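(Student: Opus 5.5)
We will exhibit a family of bases in which the leading digits allowed for reversed primes are very sparse, and then show combinatorially that some integers need many summands. A natural choice is the primorial bases $b_i=\prod_{p\leq P_i}p$, with $P_i$ the $i$-th prime. The leading digit of a reversed prime in base $b$ (of length at least two) must be coprime to $b$, and for $b=b_i$ every digit $d$ with $1<d<P_{i+1}$ fails this. So the only permitted leading digits below $P_{i+1}$ are $d=1$, and in general the allowed leading digits form the set $D_i=\{1\leq d<b_i:(d,b_i)=1\}$. Every element of $D_i$ other than $1$ is at least $P_{i+1}>P_i$. Since $P_i\asymp\log b_i$ by the prime number theorem (as $\log b_i=\theta(P_i)\sim P_i$), it suffices to show $\rev{K}_{b_i}\gg P_i$.

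\textbf{Target integers.} Take $N=m\,b^{L}$ with $m$ a digit, say $m=P_i$ or some small $m\geq 2$, and $L$ large. Suppose $N=\rev{p_1}+\cdots+\rev{p_k}$ with $k$ small compared to $P_i$.

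\textbf{Size bookkeeping.} Each summand $\rev{p_j}<N$ has some length $\ell_j\leq L+1$. Write $\rev{p_j}=d_jb^{\ell_j-1}+(\text{lower})$ with $d_j\in D_i$.
\begin{itemize}
\item Summands with $\ell_j\leq L$ contribute at most $k\,b^L$ in total. This is negligible only if $k<m$, so we need a more careful count.
\item The cleaner route is to look at the top part. Split the sum as $\sum_{\ell_j=L+1}\rev{p_j}+\sum_{\ell_j\leq L}\rev{p_j}=mb^L$. Each top-length term has $d_j\geq1$, so its value lies in $[d_jb^L,(d_j+1)b^L)$. The remaining terms total less than $k b^L$.
\item Hence $\sum_{\ell_j=L+1}d_j\in(m-k-\#\{\text{top}\},\,m]$.
\end{itemize}

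\textbf{Choosing $m$.} Take $m=\lfloor P_{i+1}/2\rfloor$, or simply $m$ with $1<m<P_{i+1}$. Then no $d_j\geq P_{i+1}$ can occur among the top terms, since $d_j\leq m$. So all top terms have $d_j=1$, and the top terms contribute less than $2\cdot\#\{\text{top}\}\,b^L$. The lower terms contribute less than $(\#\text{lower})\,b^L$. The total is therefore less than $2k\,b^L$, which forces $k>m/2\gg P_{i+1}\gg\log b_i$.

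The main obstacle is making this inequality rigorous, with the carries handled precisely. The bound $\rev{p}<(d+1)b^{\ell-1}$ makes it straightforward, and no analytic input is needed: the argument gives $\rev{K}_{b_i}\geq m/2$. For the statement, I will also note that if $\rev{K}_{b_i}=\infty$ the bound holds trivially, since Conjecture~\ref{nnamexistcon} is not assumed.
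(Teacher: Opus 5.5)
Your argument is correct and is essentially the paper's proof: you use primorial bases $b_i$, observe that multi-digit reversed primes have leading digit coprime to $b_i$ (so none lie in $[2b_i^{L},P_{i+1}b_i^{L})$), and pick a target $N=mb_i^{L}$ that forces every summand below $2b_i^{L}$, whence $k>m/2$; the paper makes the same argument with $m=p_i+1$. One small point: take $m\asymp P_{i+1}$ (e.g.\ $m=P_{i+1}-1$), not an arbitrary $1<m<P_{i+1}$, so that $k>m/2$ actually yields $k\gg\log b_i$.
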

\begin{proof}
    Let $p_i$ be the $i$-th prime number and
    \begin{equation}\label{bidef}
        b_i=\prod_{p\leq p_i}p
    \end{equation} 
    be the product of all primes up to $p_i$. All primes $p>b_i$ expressed in the base $b_i$ cannot end in any digit $2,3,4\dots, p_i$. So for all $L\geq 2$, there do not exist any base-$b_i$ reversed primes in the interval 
    \begin{equation*}
        [2 b_i^{L-1},(p_i+1)b_{i}^{L-1}].
    \end{equation*}
    Thus, if $N=(p_i+1)b_i^{L-1}$ then one requires a sum of at least $(p_i+1)/2$ reversed primes to represent $N$ in base $b_i$. In other words
    \begin{equation*}
        \rev{K}_{b_i}\geq\frac{p_i+1}{2}\gg p_i.
    \end{equation*}
    To finish, we note that $p_i\sim\log b_i$ by~\eqref{bidef} and the prime number theorem.
\end{proof}

Setting $b_i$ to be a primorial in the above proof is the ``natural" choice if one wants to maximise the value of $K_{b_i}$. As a result, we strengthen Conjecture~\ref{nnamexistcon} by suggesting that the there is a matching upper bound to~\eqref{Kbilowereq}.

\begin{conjecture}\label{Kbcon}
    For all $b\geq 2$, one has $\rev{K}_b\ll\log b$.
\end{conjecture}

It is likely that one could deduce Conjecture~\ref{Kbcon} from Conjecture~\ref{purecon}, or an even stronger (conditional) bound such as
\begin{equation*}
    \rev{K}_b\ll\omega(b)\log\omega(b),
\end{equation*}
with $\omega(b)$ denoting the number of distinct prime factors of $b$. However, we do not attempt to do so here.

\section*{Acknowledgements}
We thank Bryce Kerr, Igor Shparlinski and Yuta Suzuki for helpful discussions. We also thank Kevin Destagnol for (casually) suggesting this project during the second author's visit to Paris in 2025.

\printbibliography
\end{document}